\def\ps@pprintTitle{%
 \let\@oddhead\@empty
 \let\@evenhead\@empty
 \def\@oddfoot{}%
 \let\@evenfoot\@oddfoot}
\DeclareMathOperator*{\esssup}{ess\,sup}
\newcommand{\D}{\mathrm{d}}
\newcommand{\e}{\varepsilon}
\newtheorem{thm}{Theorem}
\newtheorem{cor}{Corollary}
\newtheorem{lem}{Lemma}
\newtheorem{prop}{Proposition}
\newtheorem{rmk}{Remark}
\newtheorem{exam}{Example}
\begin{document}
\allowdisplaybreaks
\begin{frontmatter}

\title{From atomistic model to the Peierls--Nabarro model with $\gamma$-surface for dislocations}
\author[a]{Tao Luo}\ead{tluoaa@ust.hk}
\author[b]{Pingbing Ming}\ead{mpb@lsec.cc.ac.cn}
\author[a]{Yang Xiang}\ead{maxiang@ust.hk}
\address[a]{Department of Mathematics, The Hong Kong University of Science and Technology, Clear Water Bay, Kowloon, Hong Kong}
\address[b]{LSEC, Institute of Computational Mathematics and Scientific/Engineering Computing, AMSS,
Chinese Academy of Sciences, No. 55, East Road Zhong-Guan-Cun, Beijing, 100190, China}

\begin{abstract}
The Peierls--Nabarro (PN) model for dislocations is a hybrid model that incorporates the atomistic information of the dislocation core structure into the continuum theory. In this paper, we study the convergence from a full atomistic model to the PN model with $\gamma$-surface for the dislocation in a bilayer system (e.g. bilayer graphene). We prove that the displacement field of and the total energy of the dislocation solution of the PN model are asymptotically close to those of the full atomistic model. Our work can be considered as a generalization of the analysis of the convergence  from atomistic model to Cauchy--Born rule for crystals without defects in the literature.
\end{abstract}

\begin{keyword}
dislocation, Peierls--Nabarro model, $\gamma$-surface, atomistic-to-continuum
\end{keyword}

\end{frontmatter}

\section{Introduction}
Dislocations are line defects and the primary carriers of plastic deformation in crystals. They are essential in the understanding of mechanical and plastic properties of crystalline materials \cite{Hirth1982-p-}. Models at different length and time scales have been developed to characterize the behaviors of dislocations and properties of the materials. Atomistic models and first principles calculations are able to capture detailed information of dislocations; however, they are computationally time-consuming and are limited to domains of small size over short time scales. On the other hand, continuum theory of dislocations based on linear elasticity theory  applies to much larger domains; although this theory is accurate outside the dislocation core region (of a few lattice constants size), it breaks down inside the dislocation core where the atomic structure is heavily distorted.
The Peierls--Nabarro (PN) model \cite{Peierls1940-p34-37,Nabarro1947-p256-272} is a hybrid model that incorporates in the continuum model the dislocation core structure informed by atomistic or first principles calculations. Ever since its development, this model and its generalizations have been widely employed in the investigation of dislocation-core related properties \cite{Eshelby1949-p903-912,Vitek1968-p773-786,Vitek1971-p493-508,Kaxiras1993-p3752-3755,Schoeck1994-p1085-1095,Bulatov1997-p4221-4223,Movchan1998-p373-396,Miller1998-p1845-1867,Hartford1998-p2487-2496,Schoeck1999-p2310-2313,Schoeck1999-p2629-2636,Lu2000-p3099-3108,Xu2000-p605-611,Koslowski2002-p2597-2635,Movchan2003-p569-587,Lu2003-p3539-3548,Shen2004-p683-691,Xiang2006-p383-424,Xiang2008-p1447-1460,Wei2008-p275-293,Wei2009-p2333-2354,Dai2011-p438-441,Dai2013-p1327-1337,Dai2014-p162-174,Shen2014-p125-131,Zhou2015-p155438-155438,Mianroodi2015-p109-122,Wang2015-p3768-3784,Wu2016-p11137-11142,Wang2015-p7853-7853,Dai2016-p85410-85410,Dai2016-p5923-5927}.

In the classical PN model \cite{Peierls1940-p34-37,Nabarro1947-p256-272}, the slip plane of a straight edge or screw dislocation divides the crystal into two half-space elastic continua reconnected by a nonlinear potential force incorporating the atomistic effect. The nonlinear potential force is described based on the relative displacement (disregistry) across the slip plane, in the direction of Burgers vector of the dislocation.
The total energy consists of two half-space elastic energies and a misfit energy that leads to the nonlinear potential force across the slip plane. The misfit energy in the classical PN model is approximated by a sinusoidal function of the disregistry.
The dislocation configuration is regarded as the minimizer of the total energy subject to the constraint of the Burgers vector of the dislocation. Such a hybrid model is able to give fairly good results of the dislocation core structure, the non-singular stress field and the total energy, as well as the Peierls stress and the Peierls energy for the motion of the dislocation.

Vitek \cite{Vitek1968-p773-786} introduced the concept of the generalized stacking fault energy (or the $\gamma$-surface), which is expressed in terms of the disregistry vector (relative displacement vector) across the slip plane. For a given disregistry vector, the value of the $\gamma$-surface  is defined as the energy increment per unit area (after relaxation) when the two half-spaces of the crystal have a uniform relative shift across the slip plane by this disregistry vector, which can be calculated by atomistic models. The $\gamma$-surface does not only provide a more realistic nonlinear potential than the sinusoidal form used in the original works of Peierls and Nabarro \cite{Peierls1940-p34-37,Nabarro1947-p256-272}, but also enables vector-valued disregistry function across the slip plane than the scalar disregistry function in the original PN model. Thus it is able to describe the partial dissociation of perfect dislocations \cite{Vitek1968-p773-786,Vitek1971-p493-508}. The $\gamma$-surfaces can be calculated using the empirical potentials as in the original work of Vitek \cite{Vitek1968-p773-786}. Recently, the $\gamma$-surfaces are also obtained more accurately by using the first principles calculations (e.g. \cite{Kaxiras1993-p3752-3755,Bulatov1997-p4221-4223,Hartford1998-p2487-2496,Lu2000-p3099-3108,Zhou2015-p155438-155438}).
The method of $\gamma$-surface has become an important tool for the study of dislocations and plastic properties in crystals.

Besides the incorporation of $\gamma$-surfaces, a considerable amount of generalizations of the classical PN model in other aspects have also been developed in the past seventy years. These generalizations further considered elastic anisotropy \cite{Eshelby1949-p903-912,Schoeck1994-p1085-1095,Xiang2008-p1447-1460}, the lattice discreteness and Peierls stress \cite{Bulatov1997-p4221-4223,Movchan1998-p373-396,Schoeck1999-p2629-2636,Lu2000-p3099-3108,Wei2008-p275-293,Wei2009-p2333-2354,Shen2014-p125-131}, nonlocal misfit energy \cite{Miller1998-p1845-1867,Schoeck1999-p2310-2313} and gradient energy \cite{Wang2015-p3768-3784,Mianroodi2015-p109-122}, and dislocation cross-slip \cite{Lu2003-p3539-3548,Wu2016-p11137-11142}. Generalized PN models have also been developed for curved dislocations \cite{Xu2000-p605-611,Koslowski2002-p2597-2635,Movchan2003-p569-587,Xiang2008-p1447-1460} and within the phase field framework for curved dislocations \cite{Shen2004-p683-691}.
Models within the PN framework have also been proposed for
grain boundaries \cite{Dai2013-p1327-1337,Dai2014-p162-174,Shen2014-p125-131,Wang2015-p7853-7853}, twin boundary junctions \cite{Dai2011-p438-441}, and bilayer graphene and other bilayer materials \cite{Zhou2015-p155438-155438,Dai2016-p85410-85410,Dai2016-p5923-5927}.
The PN models also provide a basis for asymptotic analysis \cite{Xiang2009-p728-743} and rigorous analysis \cite{Garroni2005-p1943-1964,Conti2011-p779-819,Conti2016-p240-251} for obtaining models of dislocation distributions at larger length scales.

Despite the wide range of generalizations and applications of the PN models,
there is not much mathematical understanding and rigorous analysis of these models. Especially, there is no rigorous analysis available in the literature for the fundamental question of convergence from atomistic model to the PN model, to the best of our knowledge. An attempt was made by Fino et al. \cite{Fino2012-p258-293}
to prove the convergence from the nearest neightbor Frenkel--Kontorova model \cite{Frenkel1938-p1340-1348} to the PN model using viscosity solutions. Although discrete lattice-site interactions in the upper and lower half-spaces were included in the nearest Frenkel--Kontorova model adopted in \cite{Fino2012-p258-293}, they directly used a continuum $\gamma$-surface in their Frenkel--Kontorova model without convergence proof from atomistic model. Rigorous convergence analysis from  fully atomistic model to the PN model with justification of the $\gamma$-surface is still lacking.

In this paper, we perform a rigorous analysis for the convergence from atomistic model to the PN model with $\gamma$-surface, in the regime where the lattice constant (or equivalently, the length of the Burgers vector of the dislocation) is much smaller than the length scale of the PN model. As a result, the decomposition of the total energy into the elastic energy and misfit energy (expressed in terms of the $\gamma$-surface) in the framework of the PN models is rigorously justified based on the atomistic model, which has never been done in the literature.
In our proof, we focus on the one-dimensional form of the generalized PN model recently developed for the inter-layer dislocations in bilayer graphene \cite{Dai2016-p85410-85410,Dai2016-p5923-5927}. Note that in the generalized PN model in Refs.~\cite{Dai2016-p85410-85410,Dai2016-p5923-5927}, dislocations are lines lying between the two graphene layers, which are different from the dislocations as point defects in a monolayer graphene studied by Ariza et al. \cite{Ariza2010-p710-734,Ariza2012-p2004-2021} using a discrete dislocation dynamics model.

Our work can also be considered as an extension of the analysis of the convergence issue of Cauchy--Born rule
\cite{Born1954-p-,Blanc2002-p341-381} for elastic media without dislocations and other defects, see, e.g. \cite{Braides1999-p23-58,Blanc2002-p341-381,Friesecke2002-p445-478,Conti2005-p515-530,W.2007-p241-297,W.2010-p1432-1468,Lu2013-p83-108,Ortner2013-p1025-1073,Makridakis2013-p813-843} for the recent progress. The major
difficulty in the analysis of the PN model lies in the fact that due to the presence of
the dislocation, the displacement vector across the slip plane of the dislocation is no
longer continuous, which is unlike in the Cauchy--Born rule where the displacement and its gradient are always continuous. Such a discontinuity in the PN model is
handled by the $\gamma$-surface, and our work successfully establishes the convergence from
atomistic model to the PN model under the one-dimensional setting.
Our proof is inspired by the work of E and Ming \cite{W.2007-p241-297}, in which the stability and convergence of the Cauchy--Born rule were rigorously analyzed for states close to perfect lattices. More precisely, we show that the dislocation solution and the associated energy of the PN model are approximations to those using the full atomistic model.  An important assumption in our analysis is that the ratio of the lattice constant to the dislocation core size is small, which is valid in the bilayer graphene due to the strong intra-layer atomic interaction and weak inter-layer atomic interaction \cite{Dai2016-p85410-85410,Dai2016-p5923-5927}.

Our convergence result is based on the consistency, the linear stability, and a fixed point argument. Infinite interaction range causes difficulties in estimating the truncation error and proving the compactness for the fixed point iteration. This is solved by detailed estimates on the decaying of the derivatives of the pair potentials and the PN solution. Another difficulty is that the stability of the atomistic dislocation solution cannot be directly obtained from that of a perfect lattice because the disregistry might be as large as a (half) Burger vector. This is different from the situation in the Cauchy--Born rule \cite{W.2007-p241-297}, where both atomistic and continuous configurations are perturbed from a common equilibrium state.
To overcome this, we first prove the stability for the PN solution using the standard techniques in elliptic partial differential equations. Consequently, we obtain the first positive eigenvalue of the linearized PN operator at the PN solution. The stability of the atomistic model is then achieved by controlling the stability gap between two models.
Such stability of dislocation core is still lack of systematic study in the literature. An attempt was made by Hudson and Ortner \cite{Hudson2014-p887-929} for an atomistic model with nearest neighbor interaction. They obtained the stability of a screw dislocation under anti-plane deformation in the sense that the dislocation solution is a global minimizer of the total energy with given total Burgers vector.
To avoid the lattice periodic translation invariant, they fixed the dislocation center.
Although we also fix the center of dislocation, our proofs are quite different from theirs. In particular, we consider both atomistic and continuum models for edge dislocation, and the stabilities are proved in a continuum-to-atomistic way, as shown above.
Again, in the stability analysis of our atomistic model, the infinite-ranged pair potentials lead to an issue in estimating double infinite summations, which is overcome by various summability lemmas obtained in this paper.

There is an extensive literature on the convergence issue of dislocation models using the language of $\Gamma$-convergence \cite{Garroni2006-p535-578,Garroni2010-p1231-1266,Ponsiglione2007-p449-469,DeLuca2012-p1-26,Conti2015-p699-755}.
To the best of our knowledge, they all study the upscaling from the discrete dislocation theory to the dislocation density theory in much larger scales than our situation here. In contrast to these works focusing on many dislocations to dislocation density and neglect the details of the core structure, our work looks into a single dislocation core structure and provide a quantitative error estimate for displacement in the PN dislocation solution with respect to the atomistic dislocation solution. In particular, we obtain the misfit potential in the continuum model from atomistic model according to the exact definition of $\gamma$ surface instead of a quadratic or sinusoidal approximation.

The present paper is organized as follows. We present the derivations of the models and state main results of this paper in Section 2. Section 3 provides some preliminary results for the rest of the analysis. In Section 4, we deal with the consistency issue of the PN model based on asymptotic analysis of the atomistic model. In Section 5, we focus on the existence and stability of the PN model. Section 6 is concerned with the stability of atomistic model. In Section 7, we collect the previous results to prove the existence of the atomistic solution which is close to the continuum solution in the asymptotic sense. Finally, our key assumption on the smallness of $\e$ is validated in the appendix using data based on first principle calculations.

\section{Models and main results}

In this paper, we study the one-dimensional form of the generalized PN model
recently developed for the inter-layer dislocations in bilayer graphene \cite{Dai2016-p85410-85410}. That is,
the dislocation is straight and the structure of the bilayer graphene is uniform in the
direction of the dislocation. We focus on an edge dislocation between a planar bilayer
graphene and neglect the buckling effect \cite{Dai2016-p85410-85410}. This is a reasonably simplified scenario, for instance, when the bilayer is bonded by a substrate such that the buckling is limited. In fact, comparing to in-plane displacement, the out-of-plane displacement affects only slightly the structure of an edge dislocation. As a result, we only study the displacement within the slip plane.
The dislocation solutions are local
minimizers of the total energy in the atomistic model and the PN model, respectively,
subject to the constraint of the total Burgers vector. We will show that the dislocation
solution of the PN model is an approximation of the dislocation solution using the
atomistic model.

\subsection{Atomistic model}\label{sec..atomistic.model}

In the one-dimensional setting, the bilayer graphene consists of two chains of
atoms along the $x$ axis. The two atomic layers are located at $y =\pm\frac{1}{2}d$, respectively,
where $d$ is the distance between two layers. The system is uniform in the $z$ direction.
For a perfect bilayer graphene without dislocation, the atoms are located at $\Gamma_\text{a}^{\pm}=\left\{\textbf{x}^\pm_i=(x_i^\pm,\pm\frac{1}{2}d): i\in \mathbb{Z}\right\}$, where $x_i^+=ia-\frac{1}{2}a$, $x_i^-=ia$,
and  $a$ is the lattice constant, see Fig.~\ref{figure..lattice.dislocation}(a). This perfect lattice is the reference state of the dislocation to be described below.

\begin{figure}[htpb]
\centering
    \subfigure[]{\includegraphics[width=0.65\textwidth]{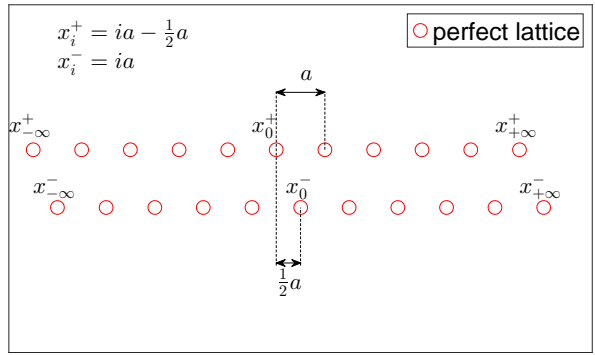}}
    \subfigure[]{\includegraphics[width=0.65\textwidth]{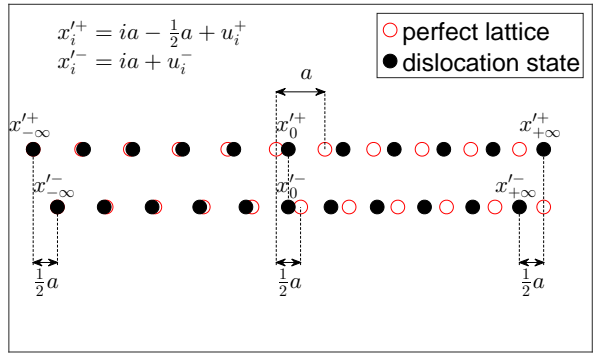}}
    \caption{(a) Perfect lattice. (b) Configuration of an edge dislocation (compared with the reference state).}\label{figure..lattice.dislocation}
\end{figure}

Suppose that there is a dislocation centered at the origin $(0,0)$ with Burgers vector $\textbf{b}=(a,0)$. This dislocation is an edge dislocation.
The dislocation structure is described by using the perfect lattice above as the reference state, and the atomic sites are $\Gamma^{\pm}_{\text{a}}=\{\textbf{x}'^\pm_i=(x_i'^{\pm}, \pm\frac{1}{2}d): i\in \mathbb{Z}\}$, where $x_i'^{+}=x_i^{+}+u_i^{+}=ia-\frac{1}{2}a+u_i^{+}$ and $x_i'^{-}=x_i^{-}+u_i^{-}=ia+u_i^{-}$.
The displacement field $u=\{u^+_i,u^-_i\}_{i\in\mathbb{Z}}$ of this edge dislocation satisfies the boundary conditions at $\pm\infty$:
\begin{eqnarray}
    \lim_{i\rightarrow-\infty}(u_i^+-u_i^-)=0,\, \, \, \lim_{i\rightarrow+\infty}(u_i^+-u_i^-)=a.\label{eq..atom.boundary.condition}
\end{eqnarray}
To fix the center of the dislocation at $(0,0)$, we also assume
\begin{eqnarray}
    u_0^+-u_0^-=a/2.\label{eq..atom.boundary.condition1}
\end{eqnarray}
See the atomic configuration of this dislocation shown in Fig.~\ref{figure..lattice.dislocation}(b).
Here we only consider the displacement within its
own layer, and the vertical
displacement that is normal to the bilayer is neglected due to the non-buckling case.

Suppose that the system is described by pairwise potentials.
The interaction is $V\left(\frac{|\textbf{x}_j'^{\pm}-\textbf{x}_i'^{\pm}|}{a}\right)=V\left(\frac{x_j'^{\pm}-x_i'^{\pm}}{a}\right)$ for atoms $\textbf{x}_j'^{\pm}$ and $\textbf{x}_i'^{\pm}$ in the same layer; while it is $V_{\text{inter}}\left(\frac{|\textbf{x}_j'^{+}-\textbf{x}_i'^{-}|}{a}\right)$ for atoms $\textbf{x}_j'^{+}$ and $\textbf{x}_i'^{-}$ from different layers.
When the distance $d$ between two layers is fixed, we have $|\textbf{x}_j'^{+}-\textbf{x}_i'^{-}|=\sqrt{(x_j'^+-x_i'^-)^2+d^2}$ and the interlayer potential only depends on the horizontal distance $|x_j'^+-x_i'^-|$.
We define
\begin{eqnarray}
    V_d\left(\frac{x_j'^{+}-x_i'^{-}}{a}\right)
    :=V_{\text{inter}}\left(\frac{|\textbf{x}_j'^{+}-\textbf{x}_i'^{-}|}{a}\right)
    =V_{\text{inter}}\left(\frac{\sqrt{(x_j'^+-x_i'^-)^2+d^2}}{a}\right).
\end{eqnarray}
The total energy of the atomistic model is given by
\begin{eqnarray}
    E_\text{a}[u]&=&\frac{1}{2}\sum_{i\in \mathbb{Z}}\sum_{s\in \mathbb{Z}^*}\left\{\left[V\left(\frac{x_{i+s}'^+-x_i'^+}{a}\right)
    -V(s)\right]+\left[V\left(\frac{x_{i+s}'^--x_i'^-}{a}\right)-V(s)\right]\right\}\nonumber\\
    &&+\sum_{i\in\mathbb{Z}}\sum_{s\in \mathbb{Z}}\left[V_d\left(\frac{x_{i+s}'^+-x_i'^-}{a}\right)-V_d\left(s-\frac{1}{2}\right)\right]\nonumber\\
    &=&\frac{1}{2}\sum_{i\in \mathbb{Z}}\sum_{s\in \mathbb{Z}^*}\left[V\left(s+\frac{u^+_{i+s}-u^+_i}{a}\right)+V\left(s+\frac{u^-_{i+s}-u^-_i}{a}\right)-2V(s)\right]\nonumber\\
    &&+\sum_{i\in\mathbb{Z}}\sum_{s\in \mathbb{Z}}\left[V_d\left(s-\frac{1}{2}+\frac{u^+_{i+s}-u^-_i}{a}\right)-V_d\left(s-\frac{1}{2}\right)\right].\label{eq..atom.energy.original}
\end{eqnarray}
Recall that the state of perfect lattice is used as the reference state.

The atomic sites of the edge  dislocation is determined by minimizing the total energy in Eq.~\eqref{eq..atom.energy.original} subject to the displacement conditions in Eqs. \eqref{eq..atom.boundary.condition} and \eqref{eq..atom.boundary.condition1}.

\subsection{Peierls--Nabarro (PN) model}\label{sec..continuum.model}

In the PN model, we consider an edge dislocation with Burgers
vector $\mathbf{b} = (a,0)$ centered at the origin of the $xy$ plane in the bilayer graphene $\Gamma_\text{PN}^+\cup\Gamma_\text{PN}^-$,
where $\Gamma_\text{PN}^{\pm}=\left\{\mathbf{x^\pm}=(x'^\pm,\pm\frac{1}{2}d):x'^\pm=x+u^\pm(x), x\in \mathbb{R}\right\}$
. As in the atomistic model, we only consider the displacement within its
own layer (i.e., the $x$ direction), and call it the horizontal displacement. The vertical
displacement that is normal to the bilayer is neglected. Here $u^+(x)$ and $u^-(x)$ are the
horizontal displacements along the two layers $\Gamma_\text{PN}^+$ and $\Gamma_\text{PN}^-$, respectively.

As in the classical PN model \cite{Peierls1940-p34-37,Nabarro1947-p256-272}, the disregistry (relative displacement) $\phi(x)$ between the two layers is
\begin{eqnarray}
    \phi(x)=u^+(x)-u^-(x).
\end{eqnarray}
The disregistry $\phi(x)$ of this edge dislocation satisfies the boundary conditions
\begin{eqnarray}
    \lim_{x\rightarrow-\infty}\phi(x)=0,\,\, \lim_{x\rightarrow+\infty}\phi(x)=a.\label{eq..cont.boundary.condition}
\end{eqnarray}
We also assume that
\begin{eqnarray}
    \phi(0)=a/2
\end{eqnarray}
to fix the center of the dislocation at $x = 0$. Note that the horizontal displacement
is not continuous in the $y$ direction, and the discontinuity is described by the disregistry function $\phi(x)$. The disregistry function $\phi(x)$ also describes the structure of the
dislocation; more precisely, $\phi′(x)$ is the distribution of the Burgers vector.

In the framework of the PN model \cite{Peierls1940-p34-37,Nabarro1947-p256-272} with $\gamma$-surface \cite{Vitek1968-p773-786}, the total energy of the bilayer system is divided into two parts: an elastic energy  due to the intra-layer elastic interaction and a misfit energy due to the nonlinear interaction between the two layers, which is
\begin{eqnarray}\label{eq..pn.total}
    E_\text{PN}[u]
    =E_\text{elas}[u]+E_\text{mis}[\phi].
\end{eqnarray}
Here $E_\text{elas}[u]$ is the elastic energy  due to the intra-layer elastic interaction in the two layers
\begin{eqnarray}\label{eq..pn.elastic}
    E_\text{elas}[u]
       =\int_{\mathbb{R}} \left(\frac{1}{2}\alpha |\nabla u^+|^2+\frac{1}{2}\alpha|\nabla u^-|^2\right) \D x,
\end{eqnarray}
where $\alpha$ is the elastic modulus. Note that in each layer, the elastic energy density is
$\frac{1}{2} \alpha |\nabla u^{\pm}|^2 $.
 The energy $E_\text{mis}[\phi]$ is the  misfit energy due to the nonlinear interaction between the two layers
\begin{eqnarray}\label{eq..pn.misfit}
    E_\text{mis}[\phi]
    =\int_{\mathbb{R}}\gamma\left(\phi\right)\D x,
\end{eqnarray}
where the density of this misfit energy $\gamma(\phi)$ is the $\gamma$-surface (or the generalized stacking fault energy) \cite{Vitek1968-p773-786} that is defined as the energy increment per unit length when there is a uniform shift of $\phi$ between the two layers. Especially, when $\phi=ia$, $i\in \mathbb{Z}$, the shifted system still has the perfect lattice structure, and $\gamma(\phi)=0$.
In summary, the energy density of the PN model is
\begin{eqnarray}
    W_\text{PN}\left(\phi,\nabla u^+,\nabla u^-\right)=\frac{1}{2}\alpha |\nabla u^+|^2+\frac{1}{2}\alpha |\nabla u^-|^2+\gamma\left(\phi\right).\label{eq..PNdensity}
\end{eqnarray}

The $\gamma$-surface  $\gamma(\phi)$ accounts for the nonlinear interaction between the two layers with displacement discontinuity $\phi$ between them. Using its definition,
the $\gamma$-surface can be calculated from the atomistic model in Sec.~\ref{sec..atomistic.model} by
\begin{eqnarray}\label{eq..gamma}
    \gamma\left(\phi\right)=\frac{1}{a}\sum_{s\in \mathbb{Z}}\left[V_d\left(s-\frac{1}{2}+\frac{\phi}{a}\right)
    -V_d\left(s-\frac{1}{2}\right)\right].
\end{eqnarray}
The constant $\alpha$ in the elastic energy can also be calculated from the atomistic model in Sec.~\ref{sec..atomistic.model} by
\begin{eqnarray}\label{eq..alpha}
    \alpha=\frac{1}{2a}\sum_{s\in \mathbb{Z}^*}V''(s)|s|^2.
\end{eqnarray}
The purpose of this paper is to establish the convergence from the atomistic model in Sec.~\ref{sec..atomistic.model} to the PN model in Eqs.~\eqref{eq..pn.total}--\eqref{eq..pn.misfit}.

This PN model for the bilayer material contains the essential features of the PN models with $\gamma$-surface. That is, the system is considered as two elastic continuums connected by a misfit energy expressed in terms of the $\gamma$-surface that  accounts for the nonlinear interaction between the two elastic continuums. Note that for a dislocation in $\mathbb{R}^3$, as in the classical PN model \cite{Peierls1940-p34-37,Nabarro1947-p256-272} with the $\gamma$-surface \cite{Vitek1968-p773-786} and later generalizations as reviewed in the introduction section, the three-dimensional space is divided by the slip plane of the dislocation into two half-space elastic continuums, and they are connected by a misfit energy expressed in terms of the $\gamma$-surface across the slip plane. The total energy is  $E_\text{PN}=E_\text{elas}+E_\text{mis}$, where
$E_\text{elas}=\int_{\mathbb{R}^3\backslash\{z=0\}}\sum_{i,j=1}^{3}\frac{1}{2}\sigma_{ij}\epsilon_{ij}\D x\D y\D z$ and  $E_\text{mis}=\int_{\mathbb{R}^2}\gamma(\pmb{\phi}(x,y))\D x\D y$. Here the $xy$ plane is the slip plane of the dislocation, and $\frac{1}{2}\sigma_{ij}\epsilon_{ij}$ is the (linear) elastic energy density,  $\sigma_{ij}$ and $\epsilon_{ij}$ are the stress and strain tensors, respectively, and $\gamma(\pmb{\phi})$ is the $\gamma$-surface. Generalization can also be made to replace the energy of linear elasticity in the PN model  by the energy of Cauchy--Born nonlinear elasticity.

\subsection{Weak interlayer interaction and rescaling}\label{sec..rescaling}

For bilayer graphene,  the  van der Waals like
interaction between the two layers is weak compared to the strong
interlayer covalent-bond interaction in each layer \cite{Dai2016-p85410-85410}. That is, $V_d\ll V$ in the atomistic model. We write the relationship as
\begin{eqnarray}\label{eq..V-tildeV}
   V_d=O(\e^2)V,
\end{eqnarray}
where $\e$ is some dimensionless small parameter to be defined below.
Recall that in the PN model for bilayer graphene, the elastic energy $E_\text{elas}$ is due to the interlayer interaction and the misfit energy $E_\text{mis}$ comes from the interaction between the two layers. The dimensionless small parameter $\e$ is defined based on the PN model as follows.

For most part of the system, the atoms are away from the dislocation, and their atomistic structure is close to that of a perfect lattice. For example, when $\phi/a\ll 1$ in the PN model in Sec.~\ref{sec..rescaling}, which happens on the negative part of the $x$ axis away from the origin,  the energy density in the PN model in Eq.~\eqref{eq..PNdensity} is approximated well by a quadratic form:
\begin{eqnarray}
    W_\text{PN}\left(\phi,\nabla u^+,\nabla u^-\right)
    &\approx&\frac{1}{2}\alpha |\nabla u^+|^2+\frac{1}{2}\alpha |\nabla u^-|^2+\frac{1}{2}\gamma''(0)\phi^2\\
    &=&\frac{1}{2}\alpha |\nabla u^+|^2+\frac{1}{2}\alpha |\nabla u^-|^2+\frac{1}{2}a^2\gamma''(0)\left(\frac{\phi}{a}\right)^2.\label{eq..quadratic.form}
\end{eqnarray}
We remark that a similar quadratic form works for the positive part, with the last term in Eq. \eqref{eq..quadratic.form} replaced by $\frac{1}{2}a^2\gamma''(0)\left(\frac{\phi-a}{a}\right)^2$.
The ratio of the coefficients $\frac{a^2\gamma''(0)}{\alpha}$ is a dimensionless constant that characterizes the relative strength of the inter-layer interaction versus the intra-layer interaction.
Recall that the parameter $\alpha$ is expressed in terms of quantities in the atomistic model as in Eq.~\eqref{eq..alpha}. Using the atomistic expression of $\gamma(\phi)$ in Eq.~\eqref{eq..gamma}, we have
\begin{eqnarray}\label{eq..gamma0}
   \gamma''(0)=\frac{1}{a^3}\sum_{s\in \mathbb{Z}}V_d''\left(s-\frac{1}{2}\right).
\end{eqnarray}
Suggested by Eqs. \eqref{eq..alpha}, \eqref{eq..gamma0}, and \eqref{eq..V-tildeV}, we define the dimensionless parameter
\begin{eqnarray}\label{eq..epsilon}
    \e=\textstyle \sqrt{\frac{a^2\gamma''(0)}{\alpha}},
\end{eqnarray}
and assume that
\begin{eqnarray}
    \e\ll 1.
\end{eqnarray}
A validation of this assumption based on values of atomistic and first principles calculations \cite{Dai2016-p85410-85410,Zhou2015-p155438-155438} is given  in the Appendix.

Using  $a/\e$ as the unit length for the spatial variable $x$  and $a$ as the unit length for the displacements in the PN model, we have
the following rescaled quantities:
\begin{eqnarray}\label{eq..rescaling}
    &&\tilde{x}=\frac{\e x}{a},\,\,\tilde{u}^{\pm}=\frac{u^{\pm}}{a},\,\,\tilde{\phi}=\frac{\phi}{a}.
 \end{eqnarray}
  Accordingly, the variables and functionals related to energy densities are rescaled to
\begin{eqnarray}\label{eq..rescaling1}
    &&\tilde{\alpha}=a\alpha, \,\,
    \tilde{\gamma}(\tilde{\phi})=a\gamma(\phi),\\
    &&\tilde{W}_\text{PN}(\tilde{\phi},\nabla_{\tilde{x}} \tilde{u}^+, \nabla_{\tilde{x}}\tilde{u}^-)=\e^{-1}W_\text{PN}(\phi,\nabla u^+, \nabla u^-),\\
    &&\tilde{E}_\text{PN}[u]=\e^{-1}E_\text{PN}[u],\,\,\tilde{E}_\text{a}[u]=\e^{-1}E_\text{a}[u].
 \end{eqnarray}

Using these rescaled variables, the total energy in the PN model can be written as
\begin{eqnarray}
    \tilde{E}_\text{PN}[u]
    &=&\int_{\mathbb{R}} \tilde{W}_\text{PN}(\tilde{\phi},\nabla_{\tilde{x}} \tilde{u}^+, \nabla_{\tilde{x}} \tilde{u}^-) \ \D \tilde{x}\nonumber\\
    &=&\int_{\mathbb{R}}\left\{\frac{1}{2}\tilde{\alpha} |\nabla_{\tilde{x}} \tilde{u}^+|^2+\frac{1}{2}\tilde{\alpha} |\nabla_{\tilde{x}} \tilde{u}^-|^2+\tilde\gamma (\tilde{\phi})\right\} \D \tilde{x},\label{eq..cont.energy.functional}
\end{eqnarray}
where
\begin{eqnarray}
    &&\tilde\alpha=\sum_{s\in \mathbb{Z}^*}\frac{1}{2}V''(s)|s|^2,\label{eq..alpha.rescaled}\\
    &&\tilde{\gamma}(\tilde{\phi})
    =\sum_{s\in \mathbb{Z}} \left[U\left(s-\frac{1}{2}+u^+-u^-\right)-U\left(s-\frac{1}{2}\right)\right].\label{eq..gamma.rescaled}
\end{eqnarray}
Here, following Eq. \eqref{eq..V-tildeV}, we define in the atomistic model that
\begin{eqnarray}\label{eq..newinterP}
    U=\e^{-2}V_d,
\end{eqnarray}
so that $U=O(1)V$.

Finally, using Eq.~\eqref{eq..newinterP}, the total energy in the atomistic model can be written as
\begin{eqnarray}
    \tilde{E}_\text{a}[u]
    &=&\frac{\e^{-1}}{2}\sum_{i\in \mathbb{Z}}\sum_{s\in \mathbb{Z}^*}\left[V\left(s+(\tilde{u}^+_{i+s}-\tilde{u}^+_i)\right)+V\left(s+(u^-_{i+s}-u^-_i)\right)-2V(s)\right]\nonumber\\
    &&+\e\sum_{i\in\mathbb{Z}}\sum_{s\in \mathbb{Z}}[U(s-\frac{1}{2}+(u^+_{i+s}-u^-_i))-{U(s-\frac{1}{2})}].
    \label{eq..atom.energy.functional}
\end{eqnarray}

For simplicity of notations, frow now on, we will still use variables without $\sim$ in the PN model after the above rescaling.

We remark that $E_\text{PN}[u]$ is independent of $\e$, and thence $E_\text{PN}[u]=O(1)$. The first and the second variations of atomistic and continuum models are denoted as
$\delta E_\text{a}[u]$,
$\delta^2 E_\text{a}[u]$,
$\delta E_\text{PN}[u]$, and
$\delta^2 E_\text{PN}[u]$, respectively. Their explicit form are given in Proposition \ref{prop..first.second.variations}.

\subsection{Main results}
For readers' convenience, we first collect assumptions and fix notations. After that, our main results will be stated.

{\bf Assumptions}
Here is the collection of our assumptions which are physically reasonable and will be discussed in details later.
\begin{enumerate}
        \item[A1] (weak inter-layer interaction) $\e\ll 1$.
        \item[A2] (symmetry) $V(x)=V(-x)$ and $U(x)=U(-x)$.
        \item[A3] (regularity) $V\in C^4(\mathbb{R}\backslash\{0\})$ and $U\in C^4(\mathbb{R})$.
        \item[A4] (fast decay)
            $|V^{(4)}(x)|\leq |x|^{-8-\theta}$ and $|U^{(4)}(x)|\leq |x|^{-6-\theta}$, $|x|\geq R$
            for some $R>0$ and $\theta>0$.
        \item[A5] (elasticity constant) $\alpha>0$.
        \item[A6] ($\gamma$-surface)
            $\arg\min_{\phi\in\mathbb{R}}\gamma(\phi)=\mathbb{Z}$ and $\gamma''(0)>0$.
        \item[A7] (small stability gap)
    $
        \kappa>\Delta,
    $
    where
    \begin{eqnarray}
        \Delta&=&\lim_{\e\rightarrow 0}\sup_{\|Df\|_\e=1}\left\langle \delta^2 E_\text{PN}[0]\bar{f},\bar{f} \right\rangle-\left\langle \delta^2 E_\text{a}[0]f,f \right\rangle_{\e},\label{eq..Delta.definition}\\
        \kappa&=&\inf_{\|f\|_{X_0}=1}\langle \delta^2 E_\text{PN}[v]f,f\rangle.
    \end{eqnarray}
     with $v$ being the dislocation solution of the PN model (cf. Theorem \ref{thm..cont.existence.minimizer}). The operators and functional spaces here will be defined in Eqs.~\eqref{eq..difference.operator}--\eqref{eq..inner.product.X_eps}.
\end{enumerate}

We remark that in our bilayer graphene setting, A1--A7 are all satisfied.
In particular, a verification of Assumption A1 is provided in the Appendix, where we show that $\e\approx0.0475\ll 1$ based on the data from Refs. \cite{Dai2016-p85410-85410,Zhou2015-p155438-155438}.

In general, Assumptions A2--A4 are satisfied by most pair potentials, such as the Lennard--Jones potential, the Morse potential, etc. The physical meaning of Assumptions A5--A6 is that the lattice structure without defects is the unique global minimizer of the total energy.

For Assumption A7, we remark that $\Delta\geq 0$ (cf. Proposition \ref{prop..Delta.geq.0}) characterizes the stability gap between atomistic model ($\delta^2 E_\text{a}[0]$) and PN model ($\delta^2 E_\text{PN}[0]$) at perfect lattice, while $\kappa>0$ (only depends on $R,\theta,\alpha$, and $\gamma''(0)$, cf. Proposition \ref{prop..PN.stability}) depicts the stability of the dislocation solution of the PN model. We also provide an explicit formula for $\Delta$ (cf. Proposition \ref{prop..explicit.Delta}). Here are two examples where A7 holds.
\begin{exam}
    [nearest neighbor interaction]
    Let $V$ be nearest neighbor interaction, i.e.,
    $V(s)=0$ for $|s|\geq 2$.
    Then $\Delta=0$ and Assumption A7 holds. (cf. Proposition \ref{prop..NN.Delta}).
\end{exam}
\begin{exam}
    [Lennard--Jones potential]
    Let $V$ be Lennard--Jones $(m,n)$ potential, i.e.,
    \begin{eqnarray}
        V(x)=V_\text{LJ}(x)=-\left(\frac{r_0}{|x|}\right)^{m}+\left(\frac{r_0}{|x|}\right)^{n},\,\,1<m<n,\,\,x\neq 0,
    \end{eqnarray}
    where $r_0$ is some characteristic distance.
    Then $\Delta=0$ and Assumption A7 holds. (cf. Proposition \ref{prop..LJ.Delta}).
\end{exam}

{\bf Notations}
In the proofs, we do not intend to optimize the constants, and hence we frequently use $C$ to be an $\e$-independent constant, which may be different from line to line.

For convenience, we introduce the difference operators $D^\pm_s$ for $f$ defined on $\e\mathbb{Z}$ or $\mathbb{R}$:
\begin{eqnarray}\label{eq..difference.operator}
    D^+_{s}f(x)=\frac{f(x+\e s)-f(x)}{\e},\,\,\,\,D^{-}_{s}f(x)=\frac{f(x)-f(x-\e s)}{\e},\,\,s\in \mathbb{Z}.
\end{eqnarray}
Moreover, we denote $D f = D^+_1 f$ and $D^k f=(D^+_1)^k f$ for $k\in \mathbb{N}$. For function $f$ defined on $\e \mathbb{Z}$, we denote
\begin{eqnarray}
    f_i=f(\e i), \,\,i\in \mathbb{Z}.
\end{eqnarray}

Next, we introduce discrete Sobolev spaces $H^k_\e=H^k_\e(\e\mathbb{Z})=\{f:\|f\|_{\e,k}<\infty\}$, $k\in \mathbb{N}$, where the $H^k_\e$ norm is defined as follows
\begin{eqnarray}
    \|f\|^2_{\e,k}=\e\sum_{0\leq j\leq k}\sum_{i\in \mathbb{Z}}|D^{j} f_i|^2.
\end{eqnarray}
Due to the convention, we denote $L^2_\e=H^0_\e$ with norm $\|\cdot\|_\e=\|\cdot\|_{\e,0}$. We refer the readers to Lemma \ref{lem..space.L2.epsilon} for relations and properties of these spaces. For $f,g\in L^2_{\e}$, their inner products is given by
\begin{eqnarray}
    \left\langle f,g\right\rangle_\e=\e\sum_{i\in \mathbb{Z}}f_i g_i.
\end{eqnarray}
If $f^\pm, g^\pm\in L^2_\e$, then we write $f=(f^+,f^-)\in L^2_\e$, $D^k f=(D^k f^+, D^k f^-)$ and define
\begin{eqnarray}
    \|f\|^2_{\e,k}&=&\|f^+\|_{\e,k}^2+\|f^-\|_{\e,k}^2,\\
    \left\langle f,g\right\rangle_\e&=&\left\langle f^+,g^+\right\rangle_\e+\left\langle f^-,g^-\right\rangle_\e.
\end{eqnarray}
Similarly, if $f^\pm, g^\pm\in L^2$, we write $f=(f^+,f^-)\in L^2$, $\nabla^k f=(\nabla^k f^+, \nabla^k f^-)$ and define
\begin{eqnarray}
    \|f\|^2_{H^k}&=&\|f^+\|_{H^k}^2+\|f^-\|_{H^k}^2,\\
    \left\langle f,g\right\rangle&=&\left\langle f^+,g^+\right\rangle+\left\langle f^-,g^-\right\rangle.
\end{eqnarray}
We simplified the norm $\|\cdot\|_{L^2}$ as $\|\cdot\|$ for $L^2$ functions. The uniform norms on $\e\mathbb{Z}$ is given by
$
    \|f\|_{L^\infty_\e}=\sup_{i\in\mathbb{Z}}|f_i|
$.

If $f=(f^+,f^-)\in L^2_\e$, we define its linear interpolation $\bar{f}=(\bar{f}^+,\bar{f}^-)\in L^2$:
\begin{eqnarray}
    \bar{f}^{\pm}(x)=\frac{(i+1)\e-x}{\e}f^{\pm}_i+\frac{x-i\e}{\e}f^{\pm}_{i+1}\,\,\text{for}\,\,i\e\leq x <(i+1)\e.
\end{eqnarray}
We define the jump of $f=(f^+,f^-)$ in $y$ direction
\begin{eqnarray}
    f^\perp(x)=f^+(x)-f^-(x)\,\,\text{and}\,\,
    f^\perp_i=f^+_i-f^-_i.
\end{eqnarray}
Note that the jump $u^\perp=\phi$ is the disregistry for the displacement of the PN model.
We define solution spaces for our problems as follows
\begin{eqnarray}
    S_0&=&\left\{u=(u^+,u^-)\in H^1_{\text{loc}}:\lim_{x\rightarrow-\infty}u^\perp(x)=0,\lim_{x\rightarrow+\infty}u^\perp(x)=1, u^\pm(0)=\pm\frac{1}{4}\right\},\\
    S_\e&=&\left\{u=(u^+,u^-)\in H^1_{\e,\text{loc}}:\lim_{i\rightarrow-\infty}u^\perp_i=0, \lim_{i\rightarrow+\infty}u^\perp_i=1, u^\pm_0=\pm\frac{1}{4}\right\}.
\end{eqnarray}
Here the functional space $H^1_{\text{loc}}$ (respectively $H^1_{\e,\text{loc}}$) consists of local $H^1$ (respectively $H^1_\e$) functions. Throughout this paper, such evaluations $u^\pm(0)$ are always in the trace sense.

We define the following functional spaces for the analysis of both models
\begin{eqnarray}
    &&X_0=\left\{f=(f^+,f^-)\in H^1_\text{loc}:\|f\|_{X_0}<\infty,f^\pm(0)=0\right\},\\
    &&X_\e=\left\{f=(f^+,f^-)\in H^1_{\e,\text{loc}}:\|f\|_{X_\e}<\infty,f^\pm_0=0\right\},
\end{eqnarray}
where $\|f\|_{X_0}=\left\langle f,f\right\rangle^{1/2}_{X_0}$ and $\|f\|_{X_\e}=\left\langle f,f\right\rangle^{1/2}_{X_\e}$ with the following inner products
\begin{eqnarray}
    \left\langle f,g\right\rangle_{X_0}
    &=&\left\langle \nabla f^+,\nabla g^+\right\rangle +\left\langle \nabla f^-,\nabla g^-\right\rangle+\left\langle f^\perp,g^\perp\right\rangle ,\\
    \left\langle f,g\right\rangle_{X_\e}
    &=&\left\langle Df^+,Dg^+\right\rangle_\e+\left\langle Df^-,Dg^-\right\rangle_\e+\left\langle f^\perp,g^\perp\right\rangle_\e.\label{eq..inner.product.X_eps}
\end{eqnarray}
It is easy to check that $X_0$ and $X_\e$ are both Hilbert spaces with respect to inner products $\langle\cdot,\cdot\rangle_{X_0}$ and $\langle\cdot,\cdot\rangle_{X_\e}$. We remark that
$
    \|f\|_{X_0}^2=\|\nabla f\|^2+\|f^\perp\|^2
$ and
$
    \|f\|_{X_\e}^2=\|D f\|_\e^2+\|f^\perp\|_\e^2
$. Finally, the following linear subspace will also be useful in the proofs
\begin{eqnarray}
    M_\e=\{f=(f^+,f^-)\in H^1_{\e,\text{loc}}:f^+_i=-f^-_i=-f^+_{-i},\,\,i\in\mathbb{Z}\}.\label{eq..def.M_e}
\end{eqnarray}

{\bf Main results}
For the PN model, we solve the minimization problem for $v=(v^+,v^-)\in S_0$:
\begin{eqnarray}
    \inf_{u\in S_0} E_\text{PN}[u].\label{eq..PN.minimization}
\end{eqnarray}
The Euler--Lagrange equation of this minimization problem reads as
\begin{eqnarray}
    \left\{
    \begin{array}{l}
    \delta E_\text{PN}[u]=0,\\
    \lim_{x\rightarrow-\infty}u^\perp(x)=0,\,\, \lim_{x\rightarrow+\infty}u^\perp(x)=1,\,\, u^\pm(0)=\pm\frac{1}{4}.
    \end{array}
    \right.\label{eq..cont.Euler.Lagrange}
\end{eqnarray}
For the atomistic model, we solve the minimization problem for $v^\e=(v^{\e,+},v^{\e,-})\in S_\e$:
\begin{eqnarray}
    \inf_{u\in S_\e} E_\text{a}[u].\label{eq..atom.minimization}
\end{eqnarray}
The Euler--Lagrange equation of this minimization problem reads as
\begin{eqnarray}
    \left\{
    \begin{array}{l}
    \delta E_\text{a}[u]=0,\\
    \lim_{i\rightarrow-\infty}u^\perp_i=0,\,\, \lim_{i\rightarrow+\infty}u^\perp_i=1,\,\, u^\pm_0=\pm\frac{1}{4}.
    \end{array}
    \right.\label{eq..atom.Euler.Lagrange}
\end{eqnarray}

Our main results of this paper are
\begin{thm}[Existence for PN model]\label{thm..cont.existence.minimizer}
    If Assumptions A1--A6 hold, then the PN problem \eqref{eq..cont.Euler.Lagrange} has a unique solution $v=(v^+,v^-)$ and $v\in S_0$ is the $X_0$-global minimizer of the energy functional \eqref{eq..cont.energy.functional}.
    Moreover, $v^+(x)=-v^-(x)$ for all $x\in \mathbb{R}$, and $v^+(\cdot)$ is strictly increasing and smooth (at least $C^5$).
\end{thm}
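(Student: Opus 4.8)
The plan is to exploit the quadratic-plus-reaction structure of $E_\text{PN}$, reducing the vector problem to a scalar heteroclinic one by a change of variables and then solving it exactly via a Bogomolny-type completion of squares.

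First I would pass to the variables $w=u^++u^-$ and $\phi=u^+-u^-=u^\perp$, so that $u^\pm=(w\pm\phi)/2$ and
\begin{equation*}
  \tfrac12\alpha|\nabla u^+|^2+\tfrac12\alpha|\nabla u^-|^2=\tfrac{\alpha}{4}|\nabla w|^2+\tfrac{\alpha}{4}|\nabla\phi|^2 .
\end{equation*}
Since $\gamma$ depends only on $\phi$, the energy decouples as $E_\text{PN}[u]=\int_{\mathbb{R}}\tfrac{\alpha}{4}|\nabla w|^2\,\D x+E[\phi]$ with $E[\phi]=\int_{\mathbb{R}}\bigl(\tfrac{\alpha}{4}|\nabla\phi|^2+\gamma(\phi)\bigr)\,\D x$. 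The constraint $u^\pm(0)=\pm\tfrac14$ reads $w(0)=0$, $\phi(0)=\tfrac12$, while the conditions at $\pm\infty$ restrict only $\phi$. Finite energy forces $\nabla w\in L^2$, and the $w$-integral is nonnegative and vanishes exactly when $w\equiv0$; hence every minimizer has $w\equiv0$, i.e.\ $v^+=-v^-$, and the problem collapses to minimizing the scalar functional $E[\phi]$ over $\phi\in H^1_{\text{loc}}$ with $\phi(-\infty)=0$, $\phi(+\infty)=1$, $\phi(0)=\tfrac12$. Since an $X_0$-perturbation $f$ has $f^\perp\in H^1(\mathbb{R})$ and hence $f^\perp(\pm\infty)=0$, adding it preserves the limits at infinity, so global minimality over $v+X_0$ is equivalent to global minimality of $E[\phi]$ in this admissible class.

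For the scalar problem I would use the Bogomolny trick. Writing $g(\phi)=2\sqrt{\gamma(\phi)/\alpha}\ge0$ (well defined since $\gamma\ge0$ by Assumption A6) and $G(t)=\int_0^t\sqrt{\alpha\,\gamma(s)}\,\D s$, completing the square gives
\begin{equation*}
  \tfrac{\alpha}{4}|\phi'|^2+\gamma(\phi)=\tfrac{\alpha}{4}\bigl(\phi'-g(\phi)\bigr)^2+\sqrt{\alpha\,\gamma(\phi)}\,\phi'=\tfrac{\alpha}{4}\bigl(\phi'-g(\phi)\bigr)^2+\tfrac{\D}{\D x}G(\phi).
\end{equation*}
For any admissible $\phi$ of finite energy, $\sqrt{\gamma(\phi)}\in L^2$ and $\phi'\in L^2$, so the last term lies in $L^1$ and, by the fundamental theorem of calculus, integrates to the fixed constant $G(1)-G(0)$, independent of the path taken by $\phi$. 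Therefore $E[\phi]\ge G(1)-G(0)$, with equality if and only if $\phi$ solves the first-order equation $\phi'=g(\phi)$. This simultaneously yields the sharp lower bound, identifies all minimizers, and, through $\phi'=g(\phi)\ge0$, forces every minimizer to be monotone.

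It remains to analyze $\phi'=2\sqrt{\gamma(\phi)/\alpha}$ with $\phi(0)=\tfrac12$. By Assumption A6, $\gamma>0$ on $(0,1)$ and $\gamma(\phi)=\tfrac12\gamma''(0)\phi^2+o(\phi^2)$ near $0$ (similarly near $1$), so $\sqrt{\gamma}$ is Lipschitz on $[0,1]$ and the right-hand side is Lipschitz; standard ODE theory then gives a unique solution, strictly increasing since the right-hand side is positive on $(0,1)$. Finite-time arrival at the equilibria $0,1$ is excluded by uniqueness, so $\phi\in(0,1)$ on all of $\mathbb{R}$ with $\phi(\pm\infty)=0,1$, and $v^+=\phi/2$ is strictly increasing; this is the unique $X_0$-global minimizer. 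For uniqueness at the level of the Euler--Lagrange equation \eqref{eq..cont.Euler.Lagrange}, I would note that any critical $\phi$ admits the first integral $\tfrac{\alpha}{4}|\phi'|^2-\gamma(\phi)\equiv C$, and the boundary conditions together with $\phi'\in L^2$ force $C=0$; hence every critical point solves $\phi'=g(\phi)$ and coincides with the minimizer. Finally, $U\in C^4$ (Assumption A3) makes $\gamma\in C^4$ through \eqref{eq..gamma.rescaled}, and bootstrapping $\phi''=\tfrac{2}{\alpha}\gamma'(\phi)$ successively raises the regularity, giving $\phi\in C^5$ and thus $v^+\in C^5$.

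The main obstacle I anticipate is justifying the Bogomolny boundary term as a genuine path-independent constant for the full admissible class: one must verify that every finite-energy competitor actually attains the prescribed limits at $\pm\infty$, so that $G(\phi(x))$ has limits $G(0),G(1)$, and that $\sqrt{\gamma(\phi)}\,\phi'\in L^1(\mathbb{R})$ so that the fundamental theorem of calculus applies. Both rely on combining the finiteness of the misfit energy with $\phi'\in L^2$ via Cauchy--Schwarz, together with the limits of $\phi$ imposed by membership in $S_0$. The remaining delicate point is the regularity of $\sqrt{\gamma}$ at the wells, which is exactly where Assumption A6 ($\gamma''(0)>0$) is essential to keep the first-order ODE Lipschitz and its solution unique.
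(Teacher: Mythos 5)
Your proof is correct and reaches the same structural endpoint as the paper (the first-order equation $\phi'=2\sqrt{\gamma(\phi)/\alpha}$, from which monotonicity, uniqueness, symmetry and $C^5$ regularity all follow by the same bootstrap), but the route to existence and global minimality is genuinely different. Your reduction to the scalar problem via $w=u^++u^-$, $\phi=u^\perp$ is the paper's two-step minimization (Proposition \ref{prop..two.step.minimization} and Step 1 of its proof) rephrased as an orthogonal change of variables, so that part is essentially identical. Where you diverge is the scalar problem: the paper runs the direct method --- it subtracts a reference profile $\phi^0$, proves a uniform $H^1$ bound on $\phi^k-\phi^0$ using the local quadratic growth of $\gamma$ from Lemma \ref{lem..gamma.properties}, extracts a weak limit, and invokes weak lower semicontinuity, only afterwards deriving $\phi'=\sqrt{4\gamma(\phi)/\alpha}$ as a first integral of the Euler--Lagrange equation. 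Your Bogomolny completion of squares instead produces the sharp lower bound $E[\phi]\ge G(1)-G(0)$ with equality exactly on solutions of the first-order ODE, so existence, the value of the minimum, the identification of all minimizers, and monotonicity come out in one stroke, with no compactness argument; the price is that it exploits the special one-dimensional ``gradient-squared plus potential'' structure, whereas the paper's direct method is the template that survives in higher dimensions or for non-decoupling elastic energies. Two small points to tighten: (i) in your uniqueness step for general critical points you invoke $\phi'\in L^2$ to force the first-integral constant $C$ to vanish, but a critical point of \eqref{eq..cont.Euler.Lagrange} is not a priori of finite energy; the boundary conditions alone suffice, since $\gamma(\phi(x))\to 0$ at $\pm\infty$ makes $C<0$ contradict $|\phi'|^2\ge 0$ and $C>0$ contradict the boundedness of $\phi$ (the paper is equally informal here); (ii) you should state explicitly, as you do implicitly, that the Lipschitz continuity of $\sqrt{\gamma}$ at the wells uses $\gamma\in C^3$ near $0$ together with $\gamma''(0)>0$, which is exactly what Lemma \ref{lem..regularity.gamma} and Assumption A6 supply.
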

\begin{thm}[Existence for atomistic model; Convergence]\label{thm..atom.existence.minimizer}
    If Assumptions A1--A7 hold, then there exists an $\e_0$ such that for any $0<\e<\e_0$, the atomistic problem \eqref{eq..atom.Euler.Lagrange} has a solution $v^\e=(v^{\e,+},v^{\e,-})$ and $v^\e\in S_\e$ is a $X_\e$-local minimizer of the energy functional \eqref{eq..atom.energy.functional}.
    Furthermore, $\|v^{\e}-v\|_{X_{\e}}\leq C\e^2$, where $v$ is the dislocation solution of the PN model in Theorem \ref{thm..cont.existence.minimizer}.
\end{thm}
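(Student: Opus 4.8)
The plan is to realize $v^\e$ as a small perturbation of the PN solution $v$ from Theorem~\ref{thm..cont.existence.minimizer} via a quantitative inverse-function (Newton--Kantorovich) argument, driven by two ingredients that must be established first: \emph{consistency}, that the PN solution nearly solves the atomistic equation, and \emph{stability}, that the atomistic Hessian is uniformly coercive at $v$. Writing $v^\e = v + w$ with $w \in X_\e$ (so that the boundary conditions and the pinning $u^\pm_0 = \pm\frac14$ defining $S_\e$ are automatically preserved, since $w^\pm_0 = 0$), the equation $\delta E_\text{a}[v+w] = 0$ becomes
\begin{equation}
  \delta^2 E_\text{a}[v]\,w = -\,\delta E_\text{a}[v] - \mathcal{N}(w), \qquad \mathcal{N}(w) := \delta E_\text{a}[v+w] - \delta E_\text{a}[v] - \delta^2 E_\text{a}[v]\,w,
\end{equation}
where $\mathcal{N}$ is the nonlinear remainder, quadratic in $w$ to leading order. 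Inverting the Hessian turns this into the fixed-point problem $w = T(w) := -(\delta^2 E_\text{a}[v])^{-1}\big(\delta E_\text{a}[v] + \mathcal{N}(w)\big)$, which I would solve by the contraction mapping principle on a ball of radius $O(\e^2)$ in $X_\e$.

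First I would prove the consistency estimate $\|\delta E_\text{a}[v]\|_{X_\e^\ast} \le C\e^2$. Since $\delta E_\text{PN}[v] = 0$, the atomistic residual measures only the discrepancy between the discrete lattice sums in \eqref{eq..atom.energy.functional} and the continuum integrals in \eqref{eq..cont.energy.functional}. Taylor-expanding each pair interaction about the smooth PN solution (which is at least $C^5$ by Theorem~\ref{thm..cont.existence.minimizer}) and resumming by parts, the zeroth-order term vanishes because $\tilde\alpha$ and $\tilde\gamma$ in \eqref{eq..alpha.rescaled}--\eqref{eq..gamma.rescaled} are \emph{defined} to match the atomistic coefficients; the would-be $O(\e)$ term vanishes by the symmetry Assumption A2 (even potentials give centered differences), leaving a leading error of order $\e^2$. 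The infinite interaction range is controlled by the decay Assumption A4 together with the decay of the derivatives of $v$ away from the core, which makes the double sums over $s$ absolutely convergent with $\e$-uniform bounds.

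Next I would establish the stability estimate $\langle \delta^2 E_\text{a}[v] f, f\rangle_\e \ge c\,\|f\|_{X_\e}^2$ for all $f \in X_\e$, with $c>0$ independent of $\e$. This cannot be obtained by linearizing at the perfect lattice, because the disregistry of $v$ is as large as half a Burgers vector near the core. Instead I would compare the atomistic quadratic form to the PN quadratic form at the \emph{same} state $v$, writing
\begin{equation}
  \langle \delta^2 E_\text{a}[v] f, f\rangle_\e = \langle \delta^2 E_\text{PN}[v]\bar f, \bar f\rangle + \Big(\langle \delta^2 E_\text{a}[v] f, f\rangle_\e - \langle \delta^2 E_\text{PN}[v]\bar f, \bar f\rangle\Big).
\end{equation}
The PN form is bounded below by $\kappa\,\|\bar f\|_{X_0}^2$ by the definition of $\kappa$ in Assumption A7, while the bracketed gap is, up to $o(1)$ corrections localized near the core, the same as the perfect-lattice gap quantified by $\Delta$ in \eqref{eq..Delta.definition}; here one uses that $v$ converges to the perfect-lattice ramp away from the origin so that the Hessian difference between $v$ and $0$ is negligible outside a bounded region. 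Combining these with the norm equivalence $\|\bar f\|_{X_0} = \|f\|_{X_\e}(1+o(1))$ yields $\langle \delta^2 E_\text{a}[v] f, f\rangle_\e \ge (\kappa - \Delta - o(1))\,\|f\|_{X_\e}^2$, which is strictly positive for small $\e$ precisely because Assumption A7 reads $\kappa > \Delta$.

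With consistency and stability in hand, the fixed-point map $T$ satisfies $\|T(w)\|_{X_\e} \le c^{-1}\big(C\e^2 + C'\|w\|_{X_\e}^2\big)$ and $\|T(w_1)-T(w_2)\|_{X_\e} \le c^{-1}C''\big(\|w_1\|_{X_\e}+\|w_2\|_{X_\e}\big)\|w_1-w_2\|_{X_\e}$, so that on the ball $\{\|w\|_{X_\e}\le C_0\e^2\}$ it maps into itself and is a contraction once $\e$ is small enough; this fixes $\e_0$ and gives a unique solution $w$ with $\|v^\e - v\|_{X_\e} = \|w\|_{X_\e}\le C\e^2$. That $v^\e$ is an $X_\e$-local minimizer follows because $\delta^2 E_\text{a}[v^\e]$ remains coercive by continuity from the coercivity of $\delta^2 E_\text{a}[v]$ and $\|v^\e-v\|_{X_\e}=O(\e^2)$. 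I expect the main obstacle to be the stability step: besides the conceptual point that one must transfer coercivity from the continuum to the discrete model through the gap $\Delta$, the quadratic forms involve double infinite sums over the interaction range $s$, and bounding the Hessian gap $o(1)$ uniformly in $\e$ requires delicate summability estimates of the type the paper develops; the same infinite-range summability is what makes the Lipschitz bound on $\mathcal{N}$ (which needs third-derivative control of $V$ and $U$ via A3--A4) the principal technical hurdle in the fixed-point step.
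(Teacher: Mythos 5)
Your overall architecture --- consistency of the PN solution $v$ as an approximate critical point of $E_\text{a}$, uniform coercivity of $\delta^2 E_\text{a}[v]$ obtained by transferring the PN coercivity $\kappa$ through the perfect-lattice stability gap $\Delta$, and a contraction mapping on an $O(\e^2)$ ball --- is exactly the paper's (Propositions \ref{prop..consistency}, \ref{prop..stability.gap.w/o.disl}, \ref{prop..explicit.Delta}, \ref{prop..atom.stability}, Lemmas \ref{lem..nonexpansive} and \ref{lem..atom.stability.u}). The only cosmetic difference in the last step is that you freeze the Hessian at $v$ and place the difference into a quadratic remainder $\mathcal{N}(w)$, whereas the paper uses the integrated Hessian $A_w$ for which $\delta E_\text{a}[v+w]=\delta E_\text{a}[v]+A_w w$ holds exactly; both variants rest on the same Lipschitz estimate for $\delta^2E_\text{a}$.

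There is, however, one genuine gap: your consistency claim $\|\delta E_\text{a}[v]\|_{X_\e^{\ast}}\le C\e^2$ in the \emph{full} dual norm of $X_\e$ is not justified and is not what the paper proves. The norm $\|f\|_{X_\e}$ controls only $\|Df^{\pm}\|_\e$ and $\|f^{+}-f^{-}\|_\e$; it does not control $\|f^{+}\|_\e$, $\|f^{-}\|_\e$, or $\|f^{+}+f^{-}\|_\e$ (take $f^{+}=f^{-}$ slowly varying and vanishing at $0$). Yet the residual $\langle\delta E_\text{a}[v],f\rangle_\e$ pairs precisely against these uncontrolled components: the elastic part is estimated in the paper by $\e\sum_i(\cdot)|f_i^{+}|\le\|\cdot\|_\e\|f^{+}\|_\e$, and the misfit part contains the term $R_{\text{mis},2}$, which after reindexing equals $\frac{\e}{2}\sum_{i,s}U'\bigl(s-\frac12+v^{+}_{i+s}-v^{-}_i\bigr)\bigl(f^{+}_{i+s}-f^{+}_i+f^{-}_{i+s}-f^{-}_i\bigr)$, i.e.\ it sees increments of $f^{+}+f^{-}$ weighted by $U'$, which does not decay in $i$. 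Consequently your map $T$ is not shown to send an $O(\e^2)$ ball of $X_\e$ into itself. The paper's remedy is to run the entire fixed-point argument in the symmetric subspace $M_\e$ of \eqref{eq..def.M_e}: there $f^{+}=-f^{-}$ kills $R_{\text{mis},2}$ identically and gives $\|f^{\pm}\|_\e=\frac12\|f^{\perp}\|_\e\le\frac12\|f\|_{X_\e}$, so Proposition \ref{prop..consistency} holds for $f\in M_\e$; one then checks $\delta E_\text{a}[v]\in M_\e$ (using the symmetry of $v$ and of the potentials) so that the iteration stays in $M_\e$, and finally that a critical point in $M_\e$ solves the full Euler--Lagrange equation \eqref{eq..atom.Euler.Lagrange}. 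Your stability and local-minimality steps are sound as stated (coercivity is proved on all of $X_\e$ and does not need the subspace), but without restricting the consistency and the contraction to $M_\e$ the argument does not close.
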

Thanks to the convergence of displacement, we have the following important corollary for convergence of energy.
\begin{cor}[Convergence of energy]\label{cor..energy.consistency}
    If Assumptions A1--A7 hold, then there exists an $\e_0$ such that for any $0<\e<\e_0$ we have
    \begin{eqnarray}
        \left|E_\text{PN}[v]-E_\text{a}[v^\e]\right|\leq C\e^2,
    \end{eqnarray}
    where $v$ and $v^\e$ are the solutions of the PN model and the atomistic model, respectively,
    in Theorems \ref{thm..cont.existence.minimizer} and \ref{thm..atom.existence.minimizer}.
\end{cor}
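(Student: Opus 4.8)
The plan is to route the comparison through the PN solution $v$ restricted to the rescaled lattice $\e\mathbb{Z}$ (still denoted $v$; this restriction lies in $S_\e$ since $v^\pm(0)=\pm\tfrac14$ and $v^\perp$ has the correct limits). Writing
\begin{eqnarray}
    E_\text{a}[v^\e]-E_\text{PN}[v]=\bigl(E_\text{a}[v^\e]-E_\text{a}[v]\bigr)+\bigl(E_\text{a}[v]-E_\text{PN}[v]\bigr),\nonumber
\end{eqnarray}
I would estimate the two parentheses separately: the first is quadratically small in the displacement error and turns out to be $O(\e^4)$, while the second is the genuine energy-consistency term and is $O(\e^2)$.

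For the first term I would use that $v^\e$ solves the atomistic Euler--Lagrange equation \eqref{eq..atom.Euler.Lagrange}, so $\delta E_\text{a}[v^\e]=0$ on $X_\e$. Because $w:=v-v^\e$ satisfies $w^\pm_0=0$, it is an admissible test direction in $X_\e$, and a second-order Taylor expansion of the smooth functional $E_\text{a}$ about $v^\e$ gives
\begin{eqnarray}
    E_\text{a}[v]-E_\text{a}[v^\e]=\langle\delta E_\text{a}[v^\e],w\rangle_\e+\tfrac12\langle\delta^2E_\text{a}[\xi]w,w\rangle_\e=\tfrac12\langle\delta^2E_\text{a}[\xi]w,w\rangle_\e\nonumber
\end{eqnarray}
for some configuration $\xi$ on the segment between $v$ and $v^\e$, the first-order term vanishing. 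Assumptions A3--A4, through the summability estimates used in the stability analysis, bound the Hessian uniformly as a bilinear form on $X_\e$, so that $|E_\text{a}[v]-E_\text{a}[v^\e]|\le C\|w\|_{X_\e}^2$; combined with the displacement bound $\|w\|_{X_\e}\le C\e^2$ of Theorem \ref{thm..atom.existence.minimizer} this yields $O(\e^4)$, which is negligible.

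The substance of the corollary is therefore the consistency estimate $|E_\text{a}[v]-E_\text{PN}[v]|\le C\e^2$. I would obtain it by Taylor-expanding every summand of the rescaled atomistic energy \eqref{eq..atom.energy.functional} in the increments $v^\pm_{i+s}-v^\pm_i$ and $v^+_{i+s}-v^-_i$ about the continuum values at the segment midpoint $\e i+\tfrac12\e s$, and matching the leading order to the PN integrals \eqref{eq..cont.energy.functional} via \eqref{eq..alpha.rescaled}--\eqref{eq..gamma.rescaled}. Several cancellations produce the $\e^2$ rate. In the intra-layer sums the term linear in the strain cancels between the two layers because $v^+=-v^-$ (Theorem \ref{thm..cont.existence.minimizer}), and the symmetry A2 kills the remaining odd-in-$s$ corrections; the leading quadratic term reproduces $\tfrac12\alpha|\nabla v^\pm|^2$ through \eqref{eq..alpha.rescaled}, and the first surviving correction is $O(\e^2)$. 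In the inter-layer sum the midpoint expansion turns $v^+_{i+s}-v^-_i$ into $\phi$ evaluated at $\e i+\tfrac12\e s$ up to an $O((\e s)^2)$ remainder, so the leading term reproduces $\gamma(\phi)$ through \eqref{eq..gamma.rescaled} while the remainder again contributes at order $\e^2$. Finally, for each fixed $s$ the residual $i$-sum is a shifted Riemann sum of a function decaying at $\pm\infty$, so by Poisson summation it carries no boundary term and deviates from the corresponding PN integral only at higher order. The smoothness of $v$ (at least $C^5$, Theorem \ref{thm..cont.existence.minimizer}) and the decay of $\nabla v$ and of the derivatives of $V,U$ (A4) provide the bounds needed to interchange the summations and to control the tails.

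The main obstacle is precisely this consistency step, and within it the infinite interaction range: the double sums over $i$ and $s$ must be reduced to the PN integrals with a remainder that is $O(\e^2)$ uniformly in $\e$ and summable in $s$. This rests on the decay estimates for the derivatives of the pair potentials and of the PN solution, i.e.\ the summability lemmas of the paper. Once the consistency term is in hand, adding the $O(\e^4)$ contribution of the critical-point step yields $|E_\text{PN}[v]-E_\text{a}[v^\e]|\le C\e^2$, as claimed.
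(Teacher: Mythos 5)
Your decomposition is genuinely different from the paper's, and the part of it that is new is sound. You split $E_\text{a}[v^\e]-E_\text{PN}[v]=\bigl(E_\text{a}[v^\e]-E_\text{a}[v]\bigr)+\bigl(E_\text{a}[v]-E_\text{PN}[v]\bigr)$ and kill the first bracket abstractly: $w=v-v^\e=-w^\e\in M_\e$, the construction in Theorem \ref{thm..atom.existence.minimizer} gives $\langle\delta E_\text{a}[v^\e],g\rangle_\e=0$ for $g\in M_\e$ (which is all you need, since $w\in M_\e$), and a uniform bound on $\delta^2E_\text{a}$ as a bilinear form on $X_\e$ (which does follow from Lemma \ref{lem..decay.properties.V.U} and the estimates of Section \ref{sec..atom.stability}) yields $O(\|w\|_{X_\e}^2)=O(\e^4)$. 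The paper instead compares $E_\text{a}[v^\e]$ directly with the quadrature $E^\text{app}_\text{PN}[v]=\e\sum_i[\alpha|\nabla v_i^+|^2+\gamma(2v_i^+)]$, carrying $w$ through every Taylor expansion (and needing $\sum_sU'(s-\tfrac12)=0$ and $w\in M_\e$ to cancel the terms linear in $w$), and then bounds the quadrature error separately. Your handling of the displacement error is cleaner; the residual work, the consistency of the energy at $v$, is essentially the same in both routes.

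The one step you underestimate is the misfit part of $|E_\text{a}[v]-E_\text{PN}[v]|$. Expanding $U(s-\tfrac12+v^+_{i+s}+v^+_i)$ about the midpoint value $U(s-\tfrac12+2v^+(\e i+\tfrac{\e s}{2}))$ leaves a first-order remainder $U'(\eta)\cdot\bigl(v^+_{i+s}+v^+_i-2v^+(\e i+\tfrac{\e s}{2})\bigr)$. If you bound the bracket by $\tfrac{\e^2s^2}{4}v_{2,s,i}$ and sum with Lemma \ref{lem..u^k_is.L2norm} ($\e\sum_iv_{2,s,i}\le C|s|$), while bounding $|U'(\eta)|$ by its supremum over an $O(1)$ window around $s-\tfrac12$ (which decays only like $|s|^{-3-\theta}$), the $s$-sum is $\sum_s|s|^{3}|s|^{-3-\theta}=\sum_s|s|^{-\theta}$, divergent for $\theta\le1$ — and A4 only guarantees $\theta>0$. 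So "the remainder again contributes at order $\e^2$" is not automatic; this is precisely the infinite-range difficulty. Two repairs work: keep the integral form of the remainder, which gives $\e\sum_i|v^+_{i+s}+v^+_i-2v^+(\e i+\tfrac{\e s}{2})|\le C\e^2s^2\|\nabla^2v^+\|_{L^1}$ (no extra factor $|s|$, so $\sum_sU^*_{1,s}s^2\sim\sum_s|s|^{-1-\theta}<\infty$); or do what the paper does and compare $2U(s-\tfrac12+v^{+}_{i+s}+v^{+}_i)$ with the symmetrized endpoint values $U(s-\tfrac12+2v^+_i)+U(s-\tfrac12+2v^+_{i+s})$, which replaces the critical $U'$ by $U''$ and gains a full power of decay in $s$. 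With either repair your route closes; the intra-layer cancellations you invoke ($v^+=-v^-$ removing the linear term, A2 removing the odd-in-$s$ correction) are correct and coincide with the paper's. Also, Poisson summation is unnecessary for the Riemann-sum step — the second-order midpoint/trapezoidal error bound the paper uses in its step 3 suffices.
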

Note that $E_\text{PN}$ is of order $O(1)$ in this corollary, and hence the relative error is of order $O(\e)$. Before the rescaling, $E_\text{PN}$ is of order $O(\e)$ and the relative error is still of order $O(\e)$.

\section{Preliminaries}
We provide some preliminary results in this section, including the calculation of variations of both models and some lemmata characterizing the properties of pair potentials and $\gamma$-surface.

We first list the explicit expressions of the variations for both model.
\begin{prop}[variations of energies]\label{prop..first.second.variations}
    Suppose that Assumptions A1--A4 hold.

    1. For $u\in S_\e$ and $f, g\in X_\e$, we have
    \begin{eqnarray}
        \left\langle\delta E_{\text{a}}[u],f\right\rangle_\e
        &=&\sum_{i\in\mathbb{Z}}\sum_{s\in\mathbb{Z}^*}\frac{1}{2}\left[V'(s+\e D^+_s u_i^+)(D^+_s f^+_i)
        +V'(s+\e D^+_s u_i^-)(D^+_s f^-_i)
        \right]\nonumber\\
        &&+\e\sum_{i\in\mathbb{Z}}\sum_{s\in\mathbb{Z}}\left[U'(s-\frac{1}{2}+u^+_{i+s}-u^-_i)(f^+_{i+s}-f^-_i)\right]
        ,\label{eq..atom.first.variation}\\
        \left\langle \delta^2 E_\text{a}[u]f,g\right\rangle_\e
        &=&\e\sum_{i\in\mathbb{Z}}\sum_{s\in\mathbb{Z}^*}\frac{1}{2}\left[V''(s+\e D^+_s u^+_i)(D^+_s f^+_i)(D^+_s g^+_i)+V''(s+\e D^+_s u^-_i)(D^+_s f^-_i)(D^+_s g^-_i)\right]\nonumber\\
        &&+\e\sum_{i\in\mathbb{Z}}\sum_{s\in\mathbb{Z}}\left[U''(s-\frac{1}{2}+u^+_{i+s}-u^-_i)(f^+_{i+s}-f^-_i)(g^+_{i+s}-g^-_i)
        \right]
        .\label{eq..atom.second.variation}
    \end{eqnarray}

    2. For $u\in S_0$ and $f, g\in X_0$, we have
    \begin{eqnarray}
        \left\langle \delta E_\text{PN}[u] ,f\right\rangle
        &=&\int_{\mathbb{R}}\left\{
        \alpha \nabla u^+ \nabla f^+ +\alpha \nabla u^- \nabla f^- +\gamma'(u^\perp)f^\perp\right\}\D x,\label{eq..cont.first.variation.weak}\\
        \left\langle \delta^2 E_\text{PN}[u]f,g\right\rangle
        &=&\int_{\mathbb{R}}\left\{
        \alpha \nabla f^+ \nabla g^+ +\alpha \nabla f^- \nabla g^- +\gamma''(u^\perp)f^\perp g^\perp\right\}\D x.\label{eq..cont.second.variation.weak}
    \end{eqnarray}

    Moreover, if $u\in H^2_\text{loc}$, then
    \begin{eqnarray}
        \left(\delta E_\text{PN}[u]\right)^\pm
        &=&-\alpha \nabla^2 u^\pm \pm\gamma'(u^\perp),\label{eq..cont.first.variation.strong}\\
        \left(\delta^2 E_\text{PN}[u]f\right)^\pm
        &=&-\alpha \nabla^2 f^\pm \pm\gamma''(u^\perp)f^\perp.\label{eq..cont.second.variation.strong}
    \end{eqnarray}
\end{prop}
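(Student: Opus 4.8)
The plan is to compute the first and second variations directly from the energy functionals by differentiating along a perturbation, using the chain rule and the symmetry/decay assumptions to justify the interchange of differentiation and summation (or integration). The final statement (Proposition on variations of energies) is a routine but careful computation, so I would organize it as four separate derivations: the atomistic first variation, the atomistic second variation, the PN weak variations, and the PN strong (pointwise) form.

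\textbf{Atomistic variations.} For the atomistic energy $E_\text{a}[u]$ in Eq.~\eqref{eq..atom.energy.functional}, I would consider $u + t f$ for $f \in X_\e$ and compute $\frac{\D}{\D t}\big|_{t=0} E_\text{a}[u + t f]$. Writing the intra-layer terms in the form $V(s + \e D^+_s u^\pm_i)$ — noting that $u^\pm_{i+s} - u^\pm_i = \e D^+_s u^\pm_i$ by the definition of $D^+_s$ in Eq.~\eqref{eq..difference.operator} — the chain rule produces $V'(s + \e D^+_s u^\pm_i) \cdot D^+_s f^\pm_i$, and the prefactor $\e^{-1}/2$ combines with the $\e$ from $\e D^+_s$ to give the clean factor $1/2$ in Eq.~\eqref{eq..atom.first.variation}. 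The inter-layer term carries a prefactor $\e$ and differentiates to $U'(s - \frac12 + u^+_{i+s} - u^-_i)(f^+_{i+s} - f^-_i)$. The second variation follows by differentiating once more in $t$; here the intra-layer prefactor becomes $\e \cdot \frac12$ (the factor $\e$ arising from the second application of $\e D^+_s$) and the inter-layer prefactor remains $\e$, matching Eq.~\eqref{eq..atom.second.variation}. Throughout, the double sum over $i \in \mathbb{Z}$ and $s \in \mathbb{Z}^*$ (or $\mathbb{Z}$) must be shown to converge, and the differentiation under the summation sign must be justified.

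\textbf{PN variations.} For the continuum model, I would differentiate $E_\text{PN}[u] = E_\text{elas}[u] + E_\text{mis}[u^\perp]$ in Eqs.~\eqref{eq..pn.elastic}--\eqref{eq..pn.misfit} along $u + t f$. The elastic part yields $\alpha \nabla u^\pm \nabla f^\pm$ after differentiating $\frac12 \alpha |\nabla u^\pm|^2$, while the misfit part gives $\gamma'(u^\perp) f^\perp$ by the chain rule applied to $\gamma(u^\perp)$ with $(u + tf)^\perp = u^\perp + t f^\perp$. This is the weak form in Eq.~\eqref{eq..cont.first.variation.weak}; the second variation in Eq.~\eqref{eq..cont.second.variation.weak} is obtained identically, with the elastic Hessian being $f$-$g$ bilinear (independent of $u$) and the misfit contributing $\gamma''(u^\perp) f^\perp g^\perp$. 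To pass to the strong form in Eqs.~\eqref{eq..cont.first.variation.strong}--\eqref{eq..cont.second.variation.strong}, I would assume $u \in H^2_\text{loc}$ and integrate by parts in the elastic term: $\int \alpha \nabla u^\pm \nabla f^\pm \, \D x = -\int \alpha \nabla^2 u^\pm f^\pm \, \D x$ (boundary terms vanish since $f^\pm \in X_0$ decays appropriately). Collecting the coefficient of $f^+$ and of $f^-$ separately, and using $\gamma'(u^\perp) f^\perp = \gamma'(u^\perp)(f^+ - f^-)$, gives the $\pm$ sign on $\gamma'$ in the strong form.

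\textbf{Main obstacle.} The routine chain-rule bookkeeping is not the difficulty; the hard part is the rigorous justification of differentiating under the infinite double sum in the atomistic case, since the pair potential has \emph{infinite interaction range}. To interchange $\frac{\D}{\D t}$ with $\sum_{i}\sum_{s}$ I would need a dominating bound uniform in $t$ near $0$, which requires controlling $V'(s + \e D^+_s u^\pm_i)$ and $U'(\cdot)$ and their increments for large $|s|$. This is where Assumptions A3--A4 (regularity and the decay rates $|V^{(4)}| \leq |x|^{-8-\theta}$, $|U^{(4)}| \leq |x|^{-6-\theta}$) enter: together with the boundedness of the difference quotients $D^+_s u^\pm_i$ for $u \in S_\e$, they yield summable tails so that the formal expressions are well-defined and the variations are continuous. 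I expect this convergence/dominated-differentiation argument — rather than the algebra of the variations themselves — to be the technical crux, and it presumably relies on the decay estimates and summability lemmas developed in the surrounding preliminary results.
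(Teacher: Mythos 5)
Your proposal is correct and follows essentially the same route as the paper: the paper likewise rewrites $E_\text{a}$ in terms of $V(s+\e D^+_s u^\pm_i)$, obtains Eqs.~\eqref{eq..atom.first.variation}--\eqref{eq..cont.second.variation.weak} by direct differentiation along $u+tf$, and passes to the strong form by integration by parts and the fundamental lemma of the calculus of variations. The interchange of differentiation with the infinite double sum, which you rightly flag as the only nontrivial point, is not spelled out in the paper's proof but is covered by the summability estimates of Lemma~\ref{lem..decay.properties.V.U}, exactly as you anticipate.
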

\begin{proof}
Using difference operators, the atomistic energy reads as
\begin{eqnarray*}
    E_\text{a}[u]
    &=&\e^{-1}\sum_{i\in \mathbb{Z}}\sum_{s\in \mathbb{Z}^*}\frac{1}{2}\left[V\left(s+\e D^+_{s}u^+_i\right)+V\left(s+\e D^+_{s}u^-_i\right)-2V(s)\right]\\
    &&+\e\sum_{i\in\mathbb{Z}}\sum_{s\in \mathbb{Z}}\left[U(s-\frac{1}{2}+(u^+_{i+s}-u^-_i))- U(s-\frac{1}{2})\right].
\end{eqnarray*}
Eqs. \eqref{eq..atom.first.variation}--\eqref{eq..cont.second.variation.weak} are obtained via direct calculations.
For $u\in H^2_\text{loc}$, integrating by parts leads to
\begin{eqnarray*}
        \left\langle \delta E_\text{PN}[u] ,f\right\rangle
        &=&\int_{\mathbb{R}}\left\{\left[-\alpha \nabla^2 u^+ +\gamma'(u^\perp)\right]f^+
        +\left[-\alpha \nabla^2 u^- -\gamma'(u^\perp)\right]f^-
        \right\}\D x,\\
        \left\langle \delta^2 E_\text{PN}[u]f,g\right\rangle
        &=&\int_{\mathbb{R}}\left\{\left[-\alpha \nabla^2 f^+ +\gamma''(u^\perp)f^\perp\right]g^+
        +\left[-\alpha \nabla^2 f^- -\gamma''(u^\perp)f^\perp\right]g^-
        \right\}\D x.
    \end{eqnarray*}
Then the fundamental lemma of the calculus of variations implies Eqs. \eqref{eq..cont.first.variation.strong}--\eqref{eq..cont.second.variation.strong}.
\end{proof}

Next, we study the regularity of $\gamma$-surface and summability of pair potentials in our models. For notation economy, we set, for $k=0,1,2,\ldots$
\begin{eqnarray}
    &&V_{k,s}= \esssup_{|\xi-s|\leq\frac{1}{2}|s|}|\nabla^k V(\xi)|, \,\,s\in \mathbb{Z}^*\label{eq..V_s^k}\\
    &&U_{k,s}= \esssup_{|\xi-s+\frac{1}{2}|\leq\frac{1}{2}}|\nabla^k U(\xi)|, \,\,s\in \mathbb{Z},\label{eq..U_s^k}\\
    &&v_{k,s,i}=\esssup_{\e (i-|s|)\leq x \leq \e (i+|s|)}\left|\nabla^k v^+(x)\right|, \,\,i, s\in \mathbb{Z}.\label{eq..u_is^k}
\end{eqnarray}
Roughly speaking, $V_{k,s}$ (or $U_{k,s}$, respectively) is a bound for $\nabla^k V(\xi)$ (or $\nabla^k U(\xi)$, respectively) nearby $\xi=s$, and $v_{k,s,i}$ is a bound for $\nabla v$ in $\e|s|$-neighbor nearby $x=\e i$.
These quantities may appear in proofs from time to time.

\begin{lem}[fast decay and summability]\label{lem..decay.properties.V.U}
    Suppose that Assumptions A3--A4 hold. Then there exists a constant $C=C(R)$ such that
    \begin{eqnarray}
        &&\textstyle |V^{(k)}(x)|\leq C|x|^{-k-4-\theta},\,\,|x|\geq \frac{1}{2},\,\,k=0,1,\ldots,4,\\
        &&\textstyle |U^{(k)}(x)|\leq C|x|^{-k-2-\theta},\,\,|x|>0,\,\,k=0,1,\ldots,4.
    \end{eqnarray}
    Moreover, there exists a constant $C=C(R,\theta)$ satisfying the summability conditions
    \begin{eqnarray}
        &&\textstyle \sum_{s\in \mathbb{Z}^*}|s|^{k+3} V_{k,s}\leq C,\,\,k=0,1,\ldots,4,\\
        &&\textstyle \sum_{s\in \mathbb{Z}}|s|^{k+1} U_{k,s}\leq C,\,\,k=0,1,\ldots,4.
    \end{eqnarray}
\end{lem}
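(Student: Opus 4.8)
The plan is to derive the pointwise decay bounds first and then read the summability estimates off from them. By the symmetry Assumption A2 one has $|V^{(k)}(-x)|=|V^{(k)}(x)|$ and $|U^{(k)}(-x)|=|U^{(k)}(x)|$, while the target right-hand sides $|x|^{-k-4-\theta}$ and $|x|^{-k-2-\theta}$ are even, so it suffices to treat $x>0$. For $k=4$ the bound on $V^{(4)}$ is exactly Assumption A4 on $\{x\ge R\}$ (with constant $1$), and on the compact set $\{1/2\le x\le R\}$ it holds after enlarging $C$, since $V^{(4)}$ is continuous there by A3 and $x^{-8-\theta}$ is bounded below by a positive constant. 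The work of the lemma is to propagate this downward to $k=3,2,1,0$; crucially, the exponents are arranged so that one integration raises the exponent by exactly one, matching the pattern $-k-4-\theta$.

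The heart of the argument, and the step I expect to require the most care, is to justify the representation $V^{(k)}(x)=-\int_x^{\infty}V^{(k+1)}(t)\,\D t$ for $x\ge R$ and $k=3,2,1,0$, which rests on showing $V^{(k)}(x)\to 0$ as $x\to+\infty$. Since $\int_R^{\infty}t^{-8-\theta}\,\D t<\infty$, Assumption A4 gives $V^{(4)}\in L^1([R,\infty))$, so $V^{(3)}(x)$ converges to a finite limit $\ell_3$. If $\ell_3\ne 0$ then $V^{(3)}\sim\ell_3$ forces $|V(x)|\sim\tfrac{|\ell_3|}{6}x^3\to\infty$, contradicting the standing normalization $V(+\infty)=0$ of the pair potential; hence $\ell_3=0$. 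Applying the same growth--contradiction argument successively yields $V''(+\infty)=V'(+\infty)=V(+\infty)=0$. With these limits in hand, iterating $V^{(k)}(x)=-\int_x^{\infty}V^{(k+1)}$ against the A4 bound gives, for $x\ge R$,
\[
    |V^{(3)}(x)|\le\int_x^\infty t^{-8-\theta}\,\D t=\tfrac{1}{7+\theta}\,x^{-7-\theta},
\]
and three further integrations give $|V^{(k)}(x)|\le C x^{-k-4-\theta}$ for $k=2,1,0$; the intermediate range $1/2\le x\le R$ is again absorbed into $C=C(R)$ by continuity. The treatment of $U$ is identical, using $|U^{(4)}(x)|\le|x|^{-6-\theta}$ and $U(+\infty)=0$; near the origin the claimed bound is automatic, because $U^{(k)}$ is bounded there by A3 while $|x|^{-k-2-\theta}\to\infty$ as $x\to0$.

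For the summability estimates I would feed the pointwise bounds directly into the definitions \eqref{eq..V_s^k}--\eqref{eq..U_s^k}. If $|\xi-s|\le\tfrac12|s|$ then $|\xi|\ge\tfrac12|s|\ge\tfrac12$, so the first part gives $V_{k,s}\le C(\tfrac12|s|)^{-k-4-\theta}\le C'|s|^{-k-4-\theta}$, whence $|s|^{k+3}V_{k,s}\le C'|s|^{-1-\theta}$ and
\[
    \sum_{s\in\mathbb{Z}^*}|s|^{k+3}V_{k,s}\le 2C'\sum_{s=1}^{\infty}s^{-1-\theta}<\infty,
\]
the series converging precisely because $\theta>0$; this is where the constant picks up its dependence on $\theta$, giving $C=C(R,\theta)$. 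For $U$ the window $|\xi-s+\tfrac12|\le\tfrac12$ forces $\xi\in[s-1,s]$, so for $|s|\ge 2$ one has $|\xi|\ge|s|-1\ge\tfrac12|s|$, hence $|s|^{k+1}U_{k,s}\le C'|s|^{-1-\theta}$ and the tail is summable as above, while the three terms $s\in\{-1,0,1\}$ are finite by the continuity of $U^{(k)}$ (A3) and contribute a bounded amount. The only genuine obstacle in the whole argument is the vanishing of the lower-order derivatives at infinity established in the second paragraph; the remainder is bookkeeping with the integrations and with convergent $p$-series.
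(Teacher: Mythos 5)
Your proof is correct and follows essentially the same route as the paper: iterated integration of the fourth-derivative bound in A4, followed by insertion of the resulting pointwise decay into the definitions of $V_{k,s}$ and $U_{k,s}$ and summation of a convergent $p$-series in $|s|^{-1-\theta}$. The one place you go beyond the paper is in justifying that the constants of integration vanish, which you do via the normalization $V(+\infty)=U(+\infty)=0$; this is not listed among A1--A7, but the paper's own proof ("taking iterative integrals on both sides") silently assumes the same thing --- without it the lemma fails (e.g.\ $V(x)=x^2$ satisfies A2--A4) --- so your explicit treatment of this step is a minor improvement rather than a deviation.
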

\begin{proof}
    Thanks to Assumption A4, there exists $C=C(R)$ such that $|V^{(4)}(x)|\leq C|x|^{-8-\theta}$ for $|x|\geq \frac{1}{2}$.
    Taking iterative integrals on both sides lead to
    $\textstyle |V^{(k)}(x)|\leq C|x|^{-k-4-\theta}$, $|x|\geq \frac{1}{2}$, $k=0,1,\ldots,4$ for some constant $C$.
    Recall the defintion Eq. \eqref{eq..V_s^k}. Then we have $V_{k,s}\leq C (\frac{1}{2}|s|)^{-k-4-\theta}$. Therefore, for $k=0,1,\ldots,4$
    \begin{eqnarray*}
        \sum_{s\in \mathbb{Z}^*}|s|^{k+3}V_{k,s}
        \leq \sum_{s\in \mathbb{Z}^*}2^{k+4+\theta}C |s|^{-1-\theta}\leq C.
    \end{eqnarray*}
    It is similar to show these properties for $U$.
\end{proof}

\begin{lem}[regularity of $\gamma$-surface]\label{lem..regularity.gamma}
    Suppose that Assumptions A3--A4 hold. Then there exist $C=C(R,\theta)$ and $\e_0=\e_0(R,\theta)$ such that for any $0<\e<\e_0$, we have
    \begin{eqnarray}
        \gamma\in C^4(\mathbb{R})\,\, \text{and}\,\,\|\nabla^k \gamma\|_{L^\infty}\leq C \,\,\text{for}\,\,k=0,1,\cdots,4.
    \end{eqnarray}
\end{lem}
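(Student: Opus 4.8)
The plan is to obtain $\gamma$ and its derivatives as explicit series in $U$ and to control every term through the summability estimates of Lemma~\ref{lem..decay.properties.V.U}. After the rescaling, \eqref{eq..gamma.rescaled} gives $\gamma(\phi)=\sum_{s\in\mathbb Z}\bigl[U(s-\tfrac12+\phi)-U(s-\tfrac12)\bigr]$, and I would write down the candidate derivatives
\[
   \gamma^{(k)}(\phi)=\sum_{s\in\mathbb Z}U^{(k)}\!\left(s-\tfrac12+\phi\right),\qquad k=1,2,3,4,
\]
the constant term $-U(s-\tfrac12)$ dropping out upon differentiation. The whole proof then amounts to showing each of these series converges well, that term-by-term differentiation is legitimate, and that the sums are uniformly bounded.

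First I would prove the uniform bounds on a single period. For $\phi\in[-\tfrac12,\tfrac12]$ the argument $s-\tfrac12+\phi$ lies in $\{\xi:|\xi-(s-\tfrac12)|\le\tfrac12\}$, so the definition \eqref{eq..U_s^k} gives $|U^{(k)}(s-\tfrac12+\phi)|\le U_{k,s}$. Splitting off the single term $s=0$ (finite by the continuity in A3) and applying Lemma~\ref{lem..decay.properties.V.U} yields $\sum_{s}U_{k,s}\le U_{k,0}+\sum_{s\neq0}|s|^{k+1}U_{k,s}\le C$, with $C=C(R,\theta)$ independent of $\e$. Hence each series above is dominated by the $\phi$-independent summable majorant $U_{k,s}$, so by the Weierstrass test it converges absolutely and locally uniformly, and $\sup_{\phi\in[-1/2,1/2]}\sum_s|U^{(k)}(s-\tfrac12+\phi)|\le C$ for $k=0,1,2,3,4$.

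Next I would upgrade regularity and globalise the bound. Since $U\in C^4$ (A3) and the differentiated series converge locally uniformly, the classical term-by-term differentiation theorem applies iteratively and shows $\gamma\in C^4(\mathbb R)$ with $\nabla^k\gamma=\gamma^{(k)}$ equal to the displayed series. The reindexing $s\mapsto s+1$, legitimate because the sums converge absolutely, gives $\gamma^{(k)}(\phi+1)=\gamma^{(k)}(\phi)$ for $k\ge1$, and likewise $\gamma(\phi+1)=\gamma(\phi)$ after writing $\gamma(\phi)=\sum_s U(s-\tfrac12+\phi)-\sum_s U(s-\tfrac12)$ as a difference of two convergent sums; thus every $\gamma^{(k)}$ is $1$-periodic. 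Therefore the period estimate propagates to all of $\mathbb R$: for $k\ge1$, $\|\gamma^{(k)}\|_{L^\infty}\le\sum_s U_{k,s}\le C$, and for $k=0$, $|\gamma(\phi)|\le\sum_s|U(s-\tfrac12+\phi)|+\sum_s|U(s-\tfrac12)|\le 2C$ on $[-\tfrac12,\tfrac12]$, hence $\|\gamma\|_{L^\infty}\le C$ by periodicity.

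I expect the only delicate points to be the interchange of summation with differentiation and the uniform-in-$\phi$ control of the terms whose argument is near the origin, where the polynomial decay of Lemma~\ref{lem..decay.properties.V.U} is vacuous; both are settled by isolating the finitely many central terms, bounded by continuity (A3), from the tail, bounded by the summable majorant $U_{k,s}$. Finally, $\e$ enters only through the standing hypotheses A3--A4 on $U$: under them all the majorants $U_{k,s}$, and hence the constant $C=C(R,\theta)$, are $\e$-independent, so the conclusion holds uniformly for all $0<\e<\e_0$.
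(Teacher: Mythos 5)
Your proof is correct and follows essentially the same route as the paper's: term-by-term differentiation of the series \eqref{eq..gamma.rescaled} justified by the summable majorants $U_{k,s}$ from Lemma~\ref{lem..decay.properties.V.U}, followed by a reduction to a single period (the paper phrases this as shifting by the nearest integer $n$ of $\xi$, which is the same reindexing you obtain from periodicity). Your treatment is merely more explicit about the Weierstrass test and the finitely many central terms.
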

\begin{proof}
    Assumption A3 with Lemma \ref{lem..decay.properties.V.U} implies that $\gamma\in C^4(\mathbb{R})$ and
    \begin{eqnarray}
        \nabla^k\gamma(\xi)=\sum_{s\in \mathbb{Z}}U^{(k)}(s-\frac{1}{2}+\xi),\,\,k=1,2,\cdots,4.
    \end{eqnarray}
    Let $n$ be the nearest integer of $\xi$. By Lemma \ref{lem..decay.properties.V.U} again, we have
    $
        |\nabla^k\gamma(\xi)|\leq \sum_{s\in \mathbb{Z}}U_{k,s+n}\leq C.
    $
    If $k=0$, then $|\gamma(\xi)|\leq \sum_{s\in \mathbb{Z}}\left[U_{0,s+n}+U_{0,s}\right]\leq C$.
\end{proof}
\begin{rmk}
  This regularity of $\gamma$-surface is indispensable and it essentially relies on the regularity and summability of the pair potential $V_d$ (or $U$). Consequently, a smooth dislocation solution depends on the regularity of $V_d$ (or $U$).
\end{rmk}

\begin{lem}[symmetry and local stability of $\gamma$-surface]\label{lem..gamma.properties}
Suppose that Assumptions A1--A6 hold. Then we have the following properties of the $\gamma$-surface
    \begin{eqnarray}\label{eq..symmetry.local.stability.gamma.surface}
        \begin{array}{ll}
            \text{(periodicity)} &\textstyle \gamma(\xi+1)=\gamma(\xi),\,\, \xi\in \mathbb{R},\\
            \text{(symmetry)}&\textstyle \gamma(\xi)=\gamma(-\xi),\,\, \xi\in \mathbb{R},\\
            \text{(local stability)}&\gamma(\xi)\geq \frac{1}{2}\gamma''(0) \xi^2,\,\,|\xi|\leq C,
        \end{array}
    \end{eqnarray}
    where the constant $C=C(R,\theta,\gamma''(0))$.
\end{lem}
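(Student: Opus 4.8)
Let me plan a proof.

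The plan is to read $\gamma$ directly from its series representation $\gamma(\xi)=\sum_{s\in\mathbb{Z}}\bigl[U(s-\tfrac12+\xi)-U(s-\tfrac12)\bigr]$ in \eqref{eq..gamma.rescaled} and treat the three properties in turn. The enabling observation, used throughout, is that by Lemma \ref{lem..decay.properties.V.U} we have $|U(x)|\le C|x|^{-2-\theta}$, so for every fixed $\xi$ the sum $\sum_{s\in\mathbb{Z}}U(s-\tfrac12+\xi)$ converges absolutely. This legitimizes the reindexing manipulations below and lets me split the defining difference into two separately convergent series.

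For periodicity I would shift the summation index. Writing $\gamma(\xi+1)=\sum_s U(s-\tfrac12+\xi+1)-\sum_s U(s-\tfrac12)$ and substituting $s\mapsto s-1$ in the first (absolutely convergent) sum turns $U\bigl((s+1)-\tfrac12+\xi\bigr)$ back into $U(s-\tfrac12+\xi)$, so the shifted series coincides with the one defining $\gamma(\xi)$ while the reference sum is untouched; hence $\gamma(\xi+1)=\gamma(\xi)$. For symmetry I would invoke Assumption A2 ($U$ even): in $\gamma(-\xi)=\sum_s U(s-\tfrac12-\xi)-\sum_s U(s-\tfrac12)$ I rewrite $U(s-\tfrac12-\xi)=U(\tfrac12-s+\xi)$ and reindex $s\mapsto 1-s$, which maps $\tfrac12-s$ to $s-\tfrac12$; the same substitution fixes the reference sum by evenness, giving $\gamma(-\xi)=\gamma(\xi)$.

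The local stability bound is the substantive step. From the definition $\gamma(0)=0$, and the symmetry just proved makes $\gamma$ even, so in particular $\gamma'(0)=0$. By Lemma \ref{lem..regularity.gamma} I have $\gamma\in C^4$ with a uniform bound $\|\gamma^{(k)}\|_{L^\infty}\le C=C(R,\theta)$ for $k\le 4$, while A6 supplies $\gamma''(0)>0$. Taylor's theorem with Lagrange remainder then gives, for some $\eta$ between $0$ and $\xi$,
\begin{equation*}
\gamma(\xi)=\tfrac12\gamma''(\eta)\,\xi^2 .
\end{equation*}
Since $\gamma''$ is Lipschitz with constant controlled by $\|\gamma'''\|_{L^\infty}\le C$, there is an explicit radius $C'=C'(R,\theta,\gamma''(0))$ such that $\gamma''(\eta)\ge \tfrac12\gamma''(0)$ whenever $|\eta|\le C'$; shrinking the neighborhood using this uniform derivative control then yields the claimed quadratic lower bound on $\{|\xi|\le C'\}$. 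Evenness additionally forces $\gamma'''(0)=0$, so that the leading deviation from the quadratic is in fact quartic, which is what pins down the dependence of the radius on $\gamma''(0)$ and the bound on $\gamma^{(4)}$.

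I expect the main obstacle to be exactly this quantitative control of the remainder: the reindexing arguments for periodicity and symmetry are essentially bookkeeping once absolute convergence is in hand, whereas the local-stability estimate requires the uniform derivative bounds of Lemma \ref{lem..regularity.gamma} together with a careful choice of the neighborhood radius depending only on $R,\theta,\gamma''(0)$, so that the threshold is uniform in $\e$.
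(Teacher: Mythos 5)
Your proof is correct and follows essentially the same route as the paper: reindex the absolutely convergent series for periodicity, invoke Assumption A2 for symmetry, and combine Taylor expansion at $0$ (where $\gamma(0)=\gamma'(0)=0$ and $\gamma''(0)>0$) with the uniform derivative bounds of Lemma \ref{lem..regularity.gamma} for local stability. The only caveat is the exact constant: your Lagrange-remainder step with $\gamma''(\eta)\ge\tfrac12\gamma''(0)$ yields $\gamma(\xi)\ge\tfrac14\gamma''(0)\xi^2$ rather than $\tfrac12\gamma''(0)\xi^2$, but this factor is immaterial to every later use of the lemma, and in fact the literal constant $\tfrac12\gamma''(0)$ cannot hold in general (for the sinusoidal $\gamma$ of the paper's own remark one has $\sin^2(\pi\xi)<\pi^2\xi^2$ for $\xi\neq 0$), so the paper's one-line proof of this item carries the same imprecision.
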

\begin{proof}
    Given $\xi\in \mathbb{R}$, let $n$ be the nearest integer of $\xi$. Then the series $\sum_{s\in \mathbb{Z}} U(s-\frac{1}{2}+\xi)$ is absolutely summable and its sum is irrelevant to the summation order. In particular, we have
    $
        \sum_{s\in \mathbb{Z}}[U(s+\frac{1}{2}+\xi)-U(s)]=\sum_{s\in \mathbb{Z}}[U(s-\frac{1}{2}+\xi)-U(s)]
    $.
    That is $\gamma(\xi+1)=\gamma(\xi)$.

    Next, the symmetry $\gamma(\xi)=\gamma(-\xi)$ follows immediately from Assumption A2.

    Finally, Assumption A5 with the regularity of $\gamma$ (Lemma \ref{lem..regularity.gamma}) implies $\gamma(\xi)\geq \frac{1}{2}\gamma''(0) \xi^2$ for $|\xi|\leq C.$
\end{proof}
\begin{rmk}
    In the classic PN model, the misfit energy density reads as $\gamma(\phi)=\frac{\mu b^2}{4\pi^2 d}[1-\cos (2\pi \phi)]$ which satisfies Eq. \eqref{eq..symmetry.local.stability.gamma.surface}. Here $d$ is the interplanar distance, $b$ is the length of the Burgers vector, and $\mu$ is the shear modulus.
\end{rmk}

\section{Existence and Stability of the PN Model}
In this section, we study the dislocation solution of the PN model, in particular, its existence and stability.

For the existence, we rewrite our one-step minimization problem \eqref{eq..PN.minimization} into a two-step minimization problem: first minimizing $u=(u^+,u^-)$ with fixed $u^\perp=\phi$, then minimizing the energy with respect to $\phi$. This two-step procedure becomes a routine since the original works of Peierls and Nabarro \cite{Nabarro1947-p256-272,Peierls1940-p34-37}, however, the equivalence lacks a rigorous proof. Here we provide a detailed discussion on the relation of these two minimization problems. We use our bilayer system setting in order to be consistent with this work. The equivalence result and its proof can both be straightforward extended to the general PN model (e.g., in three dimension and for curved dislocations).

We define the function space for disregistry $\phi$:
\begin{eqnarray*}
    \Phi_0=\left\{\phi\in H^1_{\text{loc}}(\mathbb{R}):\lim_{x\rightarrow-\infty}\phi(x)=0, \lim_{x\rightarrow+\infty}\phi(x)=1, \phi(0)=\frac{1}{2} \right\}.
\end{eqnarray*}
In our bilayer system, the two-step minimization reads as:

(i) given $\phi\in \Phi_0$, find $u_\phi=(u^+_\phi,u^-_\phi)\in S_0$ with $u^\perp_{\phi}=\phi$ such that
\begin{eqnarray}\label{eq..two-step.minimization.inner}
    E_\text{elas}[u_{\phi}]=\inf_{
    u\in S_0,\,\,
    u^\perp=\phi}E_\text{elas}[u],
\end{eqnarray}
and denote $E_\text{elas}^{II}[\phi]=\inf_{
    u\in S_0,\,\,
    u^\perp=\phi}E_\text{elas}[u]$;

(ii) find $\phi^*\in \Phi_0$ such that
\begin{eqnarray}\label{eq..two-step.minimization.outer}
    E_\text{PN}^{II}[\phi^*]=\inf_{\phi\in \Phi_0} E_\text{PN}^{II}[\phi],
\end{eqnarray}
where the total energy functional in this two-step minimization problem is defined as
\begin{eqnarray}
    E_\text{PN}^{II}[\phi]&=&E_\text{elas}^{II}[\phi]
    +E_\text{mis}[\phi].
\end{eqnarray}
We remark that, in general, $E_\text{elas}^{II}[\phi]$ always exists, even if the optimal displacement $u$ may not exist (in $S_0$) for some given disregistry $\phi$ with the consistency $u^\perp=\phi$.
In many applications such as the original PN model, there is an explicit solution for the step (i) problem~\eqref{eq..two-step.minimization.inner}. It follows that one simply needs to solve the step (ii) problem~\eqref{eq..two-step.minimization.outer}. This is a great advantage to use this two-step minimization model.

The following proposition establishes the equivalence between two minimization problems.
\begin{prop}[equivalence between two minimization problems]\label{prop..two.step.minimization}
    Suppose there exist $u^0\in S_0$ such that $E_\text{PN}[u^0]<+\infty$. Then the two-step minimization problem \eqref{eq..two-step.minimization.inner} and \eqref{eq..two-step.minimization.outer} is equivalent to the one-step minimization problem \eqref{eq..PN.minimization} in the following sense:

    1. $m^{I}=m^{II}$, where $m^I=\inf_{u\in S_0}
    E_\text{PN}[u]$ and $m^{II}=\inf_{\phi\in \Phi_0} E_\text{PN}^{II}[\phi]$

    2. Given any minimizing sequence $\{u^i\}_{i=1}^{\infty}$ of problem \eqref{eq..PN.minimization}, then $\{\phi^i:=u^{i,\perp}\}_{i=1}^{\infty}$ is a minimizing sequence of problem
    \eqref{eq..two-step.minimization.outer}. Conversely, given any minimizing sequence $\{\phi^i\}_{i=1}^{\infty}$ of problem \eqref{eq..two-step.minimization.outer}, there exists a sequence $\{u^i\}_{i=1}^{\infty}$ with $u^{i,\perp}=\phi^i$, $i\in \mathbb{N}$ such that $\{u^i\}_{i=1}^{\infty}$ is a minimizing sequence of problem
    \eqref{eq..PN.minimization}.

    3. If $u^*$ is a minimizer of problem \eqref{eq..PN.minimization}, $\phi^*:=u^{*,\perp}$ is a minimizer of problem
    \eqref{eq..two-step.minimization.outer}. Conversely, if $\phi^*$ is a minimizer of problem \eqref{eq..two-step.minimization.outer} and $u^*$ solves
    \begin{eqnarray}
        E_\text{elas}[u^*]=\inf_{u\in S_0,\,\,u^\perp=\phi^*}E_\text{elas}[u],\label{eq..equiv.elastic.I.II}
    \end{eqnarray}
    then $u^*$ is a minimizer of problem \eqref{eq..PN.minimization}.
    In particular, if the minimizer $u^*$ in \eqref{eq..equiv.elastic.I.II} is unique, then $u^*$ and $\phi^*$ has an one-to-one correspondence.
\end{prop}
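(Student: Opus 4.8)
The plan is to exploit the single structural fact that drives everything: the total energy splits as $E_\text{PN}[u]=E_\text{elas}[u]+E_\text{mis}[u^\perp]$, where the misfit term depends on $u$ only through the jump $u^\perp$. Two elementary observations accompany this. First, if $u\in S_0$ then its jump $u^\perp$ lies in $\Phi_0$: the boundary limits and the normalization $u^\pm(0)=\pm\tfrac14$ give exactly $\phi(0)=\tfrac12$. Second, conversely, for every $\phi\in\Phi_0$ the symmetric lift $u=(\tfrac12\phi,-\tfrac12\phi)$ belongs to $S_0$ and satisfies $u^\perp=\phi$; hence the admissible set $\{u\in S_0:u^\perp=\phi\}$ is always nonempty, so $E_\text{elas}^{II}[\phi]=\inf\{E_\text{elas}[u]:u\in S_0,\ u^\perp=\phi\}$ is a well-defined element of $[0,+\infty]$ for every $\phi$, even when no optimal $u$ exists.

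First I would prove Part 1 by two inequalities. For $m^I\ge m^{II}$: any $u\in S_0$ has $u^\perp\in\Phi_0$ and $E_\text{elas}[u]\ge E_\text{elas}^{II}[u^\perp]$ by definition of the infimum, whence $E_\text{PN}[u]\ge E_\text{PN}^{II}[u^\perp]\ge m^{II}$; taking the infimum over $u$ gives the claim. For $m^I\le m^{II}$: fix $\phi\in\Phi_0$ and $\delta>0$; since $E_\text{elas}^{II}[\phi]$ is an infimum over a nonempty set, choose $u\in S_0$ with $u^\perp=\phi$ and $E_\text{elas}[u]\le E_\text{elas}^{II}[\phi]+\delta$, so that $E_\text{PN}[u]\le E_\text{PN}^{II}[\phi]+\delta$ and therefore $m^I\le E_\text{PN}^{II}[\phi]+\delta$; letting $\delta\to0$ and taking the infimum over $\phi$ closes the estimate. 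The hypothesis that some $u^0\in S_0$ has finite $E_\text{PN}[u^0]$ serves only to guarantee $m^I=m^{II}<+\infty$, ruling out degenerate cases.

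Parts 2 and 3 repeat this two-sided bookkeeping at the level of sequences and of minimizers. For the forward direction of Part 2, if $E_\text{PN}[u^i]\to m^I$ then the sandwich $m^{II}\le E_\text{PN}^{II}[u^{i,\perp}]\le E_\text{PN}[u^i]\to m^I=m^{II}$ shows $\{u^{i,\perp}\}$ minimizes $E_\text{PN}^{II}$; for the converse I would, given a minimizing sequence $\{\phi^i\}$, pick $\delta$-approximate elastic lifts $u^i$ with $\delta=1/i$ exactly as above and run the same sandwich. Part 3 is the limiting case $\delta=0$: a minimizer $u^*$ of $E_\text{PN}$ forces $E_\text{PN}^{II}[u^{*,\perp}]=m^{II}$, and conversely any $u^*$ solving the inner elastic problem for a minimizer $\phi^*$ reassembles to $E_\text{PN}[u^*]=E_\text{PN}^{II}[\phi^*]=m^I$. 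For the one-to-one correspondence I would note that subtracting the identical misfit terms from $E_\text{PN}[u^*]=m^I=m^{II}=E_\text{PN}^{II}[\phi^*]$ yields $E_\text{elas}[u^*]=E_\text{elas}^{II}[\phi^*]$; thus every PN-minimizer automatically attains the inner elastic infimum for its own jump, and under the uniqueness hypothesis the maps $u^*\mapsto u^{*,\perp}$ and $\phi^*\mapsto(\text{its unique elastic lift})$ become mutually inverse.

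I do not expect a genuine obstacle here: the argument is purely the additive structure of the energy combined with a careful passage to infima. The one point requiring care --- and the one I would flag explicitly --- is that the inner minimizer in step (i) need not exist, so every ``backward'' direction must be run with $\delta$-approximate lifts rather than exact ones, and $E_\text{elas}^{II}[\phi]$ must be treated throughout as a possibly infinite infimum over the (always nonempty, thanks to the symmetric lift) admissible set.
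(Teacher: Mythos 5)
Your proposal is correct and follows essentially the same route as the paper: both rest on the key inequality $m^{II}\le E_\text{PN}^{II}[u^{\perp}]\le E_\text{PN}[u]$ for the easy direction and on $1/i$- (or $\delta$-) approximate lifts $u^i$ with $u^{i,\perp}=\phi^i$ for the converse, with Parts 2 and 3 obtained by the same sandwich. Your explicit remarks that the admissible set $\{u\in S_0: u^\perp=\phi\}$ is nonempty via the symmetric lift, and that subtracting the common misfit term shows every PN-minimizer attains the inner elastic infimum, are correct refinements that the paper leaves implicit.
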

\begin{rmk}
    Condition \eqref{eq..equiv.elastic.I.II} means $E_\text{elas}[u^{*}]=E_\text{elas}^{II}[\phi^*]$. For most applications, including our case $E_\text{elas}[u]=\int_{\mathbb{R}}\left(\frac{1}{2}\alpha|\nabla u^+|^2+\frac{1}{2}\alpha|\nabla u^-|^2\right)\D x$, the minimizer $u^*\in S_0$ satisfying Eq. \eqref{eq..equiv.elastic.I.II} exists, and it is unique.
\end{rmk}
\begin{proof}
    Although $m^{I}$ and $m^{II}$ may be $-\infty$, they are prevented to be $+\infty$ due to the assumption $E_\text{PN}[u^0]<+\infty$.

    1. If $\{u^i\}_{i=1}^{\infty}$ is a minimizing sequence of problem \eqref{eq..PN.minimization}, then $\lim_{i\rightarrow+\infty}E_\text{PN}[u^i]=m^{I}$. For all $i$,
    \begin{eqnarray}\label{eq..m_II<m_I}
        m^{II}\leq E_\text{PN}^{II}[u^{i,\perp}]
        \leq E_\text{PN}[u^i].
    \end{eqnarray}
    Taking the limit $i\rightarrow +\infty$, we obtain $m^{II}\leq m^{I}$.

    Conversely, if $\{\phi^i\}_{i=1}^{\infty}$ a minimizing sequence of problem \eqref{eq..two-step.minimization.outer}, then
    $
        \lim_{i\rightarrow+\infty}E_\text{PN}^{II}[\phi^i]=m^{II}
    $.
    For any $i$, there exist $u^i\in S_0$ with $u^{i,\perp}=\phi^i$ such that
    $E_\text{elas}[u^i]\leq i^{-1}+E_\text{elas}^{II}[\phi^i]$. Then
    \begin{eqnarray}\label{eq..m_I<m_II}
        m^{I}\leq E_\text{PN}[u^i]
        \leq i^{-1}+E_\text{PN}^{II}[\phi^i]
    \end{eqnarray}
    Taking the limit $i\rightarrow +\infty$, we obtain $m^{I}\leq m^{II}$.
    Hence $m^{I}=m^{II}$.

    2. If $\{u^i\}_{i=1}^{\infty}$ is a minimizing sequence of problem \eqref{eq..PN.minimization}, then we set $\phi^i=u^{i,\perp}$ for all $i\in \mathbb{N}$. Thus $\lim_{i\rightarrow}E_\text{PN}^{II}[\phi^i]=m^{II}$ follows from Eq. \eqref{eq..m_II<m_I} and $m^I=m^{II}$.

    Conversely, if $\{\phi^i\}_{i=1}^{\infty}$ a minimizing sequence of problem \eqref{eq..two-step.minimization.outer}, then we choose $u^i\in S_0$ with $u^{i,\perp}=\phi^i$ such that
    $E_\text{elas}[u^i]\leq i^{-1}+E_\text{elas}^{II}[\phi^i]$.
    Thus $\lim_{i\rightarrow+\infty}E_\text{PN}[u^i]=m^I$ follows from Eq. \eqref{eq..m_I<m_II} and $m^I=m^{II}$.

    3. If $E_\text{PN}[u^*]=m^{I}$, then $E_\text{PN}^{II}[u^{*,\perp}]\leq E_\text{PN}[u^*]=m^I=m^{II}$.
    Conversely, if $E_\text{PN}^{II}[\phi^{*}]=m^{II}$ and
    $E_\text{elas}[u^{*}]=\inf_{u\in S_0,\,\,u^\perp=\phi^*}E_\text{elas}[u]$, then
    \begin{eqnarray*}
        E_\text{PN}[u^*]
        =E_\text{elas}[u^*]+E_\text{mis}[u^{*,\perp}]
        =E_\text{elas}^{II}[\phi^*]+E_\text{mis}[\phi^*]=E_\text{PN}^{II}[\phi^*]=m^{I}.
    \end{eqnarray*}
\end{proof}

Now we prove Theorem \ref{thm..cont.existence.minimizer} by solving the two-step minimization. The first step is explicitly solvable. Next, the existence of the minimizer $\phi$ is then proved by the direct method in the calculus of variations. Different from the standard case, any admissible function $\phi\in S_0$ is definitely not $L^2$. Hence, a reference state $\phi^0$ is needed. We then finish the proof by working on the deviation of the solution $\phi-\phi^0$.

\begin{proof}[Proof of Theorem \ref{thm..cont.existence.minimizer}]
    1. Two-step minimization problem. Recall that
    $
        E_\text{elas}[u]=\int_{\mathbb{R}}\left(\frac{1}{2}\alpha|\nabla u^+|^2+\frac{1}{2}\alpha|\nabla u^-|^2\right)\D x.
    $
    For any $\phi\in\Phi_0$, we have
    \begin{eqnarray*}
    \arg\min_{u\in S_0,u^\perp=\phi}E_\text{elas}[u]
    =\arg\min_{u\in S_0}\int_{\mathbb{R}}\left(\frac{1}{2}\alpha|\nabla u^+|^2+\frac{1}{2}\alpha|\nabla u^+-\nabla \phi|^2\right)\D x
    =\left(\frac{1}{2}\phi,-\frac{1}{2}\phi\right).
    \end{eqnarray*}
    Moreover,
    $E_\text{elas}^{II}[\phi]=E_\text{elas}[(\frac{1}{2}\phi,-\frac{1}{2}\phi)]=\frac{1}{4}
    \int_{\mathbb{R}}\alpha|\nabla \phi|^2\D x$. By Proposition \ref{prop..two.step.minimization}, we only need to minimize the following energy $E_\text{PN}^{II}[\phi]$ in terms of disregistry $\phi$:
    \begin{eqnarray}
        E_\text{PN}^{II}[\phi]=\int_{\mathbb{R}}\left(\frac{1}{4}\alpha|\nabla \phi|^2+\gamma(\phi)\right)\D x.\label{eq..E_c,II}
    \end{eqnarray}

    2. Existence. Let $\phi^0(x)=\min\{\max\{x+\frac{1}{2},0\},1\}$ for $x\in \mathbb{R}$. Denote $m=\inf_{\phi\in \Phi_0} E_\text{PN}^{II}[\phi]$. By Assumption A5, $\inf_{\xi\in\mathbb{R}}\gamma(\xi)=\gamma(0)=0$, and hence $m\geq 0$. Also $m\leq E_\text{PN}^{II}[\phi^0]<+\infty$. Hence $m$ is finite. Let $\{\phi^k\}_{k=1}^{\infty}\subset \Phi_0$ be a minimizing sequence for $E_\text{PN}^{II}[\cdot]$.

    Let $\omega^k=\phi^k-\phi^0$. Then $\|\nabla \omega^k\|^2\leq \|\nabla \phi^k\|^2+\|\nabla \phi^0\|^2\leq \frac{4}{\alpha}E_\text{PN}^{II}[\phi^k]+\frac{4}{\alpha}E_\text{PN}^{II}[\phi^0]$. Next we estimate $\|\omega^k\|^2$. According to Lemma \ref{lem..gamma.properties}, there exist a constant $c_0 (\leq \frac{1}{4})$ such that $\gamma(\xi)\geq \frac{1}{2}\gamma''(0) \xi^2$ for $|\xi|\leq c_0$. Let $m'=\min_{c_0\leq \xi \leq 1-c_0}\gamma(\xi)>0$. Note that the characteristic function $\chi_{\{c_0\leq \phi^k(x) \leq 1-c_0\}}\leq \frac{\gamma(\phi^k)}{m'}$. We have
    \begin{eqnarray*}
        \|\omega^k\|^2
        &=&\textstyle \int_{-\frac{1}{2}}^{\frac{1}{2}}|\phi^k-x-\frac{1}{2}|^2\D x+\int_{-\infty}^{-\frac{1}{2}}|\phi^k-0|^2\D x+\int_{\frac{1}{2}}^{+\infty}|\phi^k-1|^2\D x\\
        &\leq&\textstyle 1+\int_{-\infty}^{-\frac{1}{2}}|\phi^k|^2\chi_{\{0\leq \phi^k\leq c_0\}}\D x+\int_{-\infty}^{-\frac{1}{2}}|\phi^k|^2\chi_{\{c_0\leq \phi^k\leq 1\}}\D x\\
        &&+\textstyle \int_{\frac{1}{2}}^{+\infty}|\phi^k-1|^2\chi_{\{1-c_0\leq \phi^k\leq 1\}}\D x+\int_{\frac{1}{2}}^{+\infty}|\phi^k-1|^2\chi_{\{\frac{1}{2}\leq \phi^k\leq 1-c_0\}}\D x\\
        &\leq&\textstyle 1+\int_{-\infty}^{-\frac{1}{2}}\frac{2}{\gamma''(0)}\gamma(\phi^k)\D x+\int_{-\infty}^{-\frac{1}{2}}\frac{1}{m'}\gamma(\phi^k)\D x+\int_{\frac{1}{2}}^{+\infty}\frac{2}{\gamma''(0)}\gamma(\phi^k)\D x+\int_{\frac{1}{2}}^{+\infty} \frac{1}{m'}\gamma(\phi^k)\D x\\
        &\leq&\textstyle 1+(\frac{2}{\gamma''(0)}+\frac{1}{m'})E_\text{PN}^{II}[\phi^k].
    \end{eqnarray*}
    Therefore
    \begin{eqnarray*}
        \|\omega^k\|_{H^1}^2
        =\|\omega^k\|^2+\|\nabla \omega^k\|^2
        \leq \textstyle 1+(\frac{2}{\gamma''(0)}+\frac{1}{m'}+\frac{4}{\alpha})E_\text{PN}^{II}[\phi^k]+\frac{4}{\alpha}E_\text{PN}^{II}[\psi^0].
    \end{eqnarray*}
    Since $\phi^k$ is a minimizing sequence, we obtain that $\omega^k$ is uniformly bounded in $H^1$.

    Passing to a subsequence, $\omega^k$ converges weakly to $\omega^*$ in $H^1$. The Sobolev imbedding theorem implies $\{\omega^k\}_{k=1}^\infty\subset C^{0,\frac{1}{2}}(\mathbb{R})$ and $\omega^*\in C^{0,\frac{1}{2}}(\mathbb{R})$. This leads to $\lim_{x\rightarrow\pm\infty}\omega(x)=0$ and $\omega(x)=0$. Let $\phi^*=\omega^*+\phi^0$. Then $\phi^*\in \Phi_0$ and $\phi^k-\phi^*$ converges weakly to $0$ in $H^1(\mathbb{R})$. Thanks to the convexity of $E_\text{elas}^{II}[\phi]$, $E_\text{PN}^{II}[\cdot]$ is weakly lower semicontinuous on $H^1_{\text{loc}}(\mathbb{R})$. Thus $E_\text{PN}^{II}[\phi^*]\leq \lim\inf_{k\rightarrow\infty}E_\text{PN}^{II}[\phi^k]=m$. It follows that $E_\text{PN}^{II}[\phi^*]=m=\min_{\phi\in \Phi_0}E_\text{PN}^{II}[\phi]$. Thus $\phi^*$ is the minimizer of energy functional \eqref{eq..E_c,II} in $\Phi_0$. Therefore, $v:=(\frac{1}{2}\phi^*,-\frac{1}{2}\phi^*)$ is the minimizer of energy functional $E_\text{PN}[\cdot]$ in $S_0$.

    3. Euler--Lagrange equation. Since $\phi^*$ is the minimizer of energy functional $E_\text{PN}^{II}[\cdot]$ in $\Phi_0$, it is the weak solution of the Euler--Lagrange equation
    \begin{eqnarray}
        -\frac{1}{2}\alpha \nabla^2 \phi^*-\gamma'(\phi^*)=0.\label{eq..E-L.phi}
    \end{eqnarray}
    Notice that $\gamma'(\cdot)$ is continuous. As a result, $\phi^*\in C^2$ is a classical solution of the Euler--Lagrange equation \eqref{eq..E-L.phi}. Therefore $v=(\frac{1}{2}\phi^*,-\frac{1}{2}\phi^*)\in C^2$ is the classical solution of the Euler--Lagrange equations \eqref{eq..cont.Euler.Lagrange}.

    4. Monotonicity. For $x\in \mathbb{R}$, we have $-\frac{1}{2}\alpha (\nabla^2 \phi^*)(\nabla \phi^*)+\gamma'(\phi^*)(\nabla \phi^*)=0$. Taking integral from $x$ to $+\infty$, we have
    $
        -\frac{1}{4}\alpha (\nabla \phi^*(x))^2+\gamma(\phi^*(x))
        =\lim_{\xi\rightarrow+\infty}\left[-\frac{1}{4}\alpha(\nabla \phi^*(\xi))^2+\gamma(\phi^*(\xi))\right]=0
    $.
    Thus $\nabla \phi^*=\pm\sqrt{\frac{4}{\alpha}\gamma(\phi^*)}$. Since $\nabla \phi^*$ is continuous and $\frac{4}{\alpha}\gamma(\phi^*)>0$ for all $x\in\mathbb{R}$, $\nabla \phi^*$ does not change the sign. Hence $\nabla \phi^*=\sqrt{\frac{4}{\alpha}\gamma(\phi^*)}>0$ follows the fact that $\int_{-\infty}^{+\infty} \nabla \phi^*\D x=1>0$. Therefore, $\nabla v^{\pm}=\pm\sqrt{\frac{1}{\alpha}\gamma(\pm 2v^\pm)}$. In other words, $v^+$ (respectively, $v^-$) is monotonically increasing (respectively, decreasing) on $\mathbb{R}$.

    5. Uniqueness. The uniqueness of the classical solution of the Euler--Lagrange equations \eqref{eq..cont.Euler.Lagrange} follows from that of the initial value problem $\nabla v^{\pm}=\pm\sqrt{\frac{1}{\alpha}\gamma(\pm 2v^\pm)}$ with the initial condition $v^\pm(0)=\pm\frac{1}{4}$.

    6. Symmetry. Since $v=(\frac{1}{2}\phi^*,-\frac{1}{2}\phi^*)$ is the unique minimizer of the $E_\text{PN}[\cdot]$ in $S_0$, we immediately have the symmetry $v^+(x)=-v^-(x)$ for all $x\in \mathbb{R}$.

    7. Regularity. Note that $\|\phi^*\|_{L^\infty}\leq 1$. Since $\nabla \phi^*\in C^1(\mathbb{R})$, $\nabla \phi^*\geq 0$ and the fact that $\phi^*$ is bounded, we have $\lim_{x\rightarrow\pm\infty}\nabla \phi^*(x)=0$. Thus $\|\nabla \phi^*\|_{L^\infty}\leq C$.
    Utilizing Eq. \eqref{eq..E-L.phi}, it is no difficulty to bootstrap the regularity of $\phi^*$, and hence the regularity of $v=(\frac{1}{2}\phi^*,-\frac{1}{2}\phi^*)$. Indeed, thanks to Lemma \ref{lem..regularity.gamma}, we have $v\in C^k(\mathbb{R})$ and $\|\nabla^k v\|_{L^\infty}\leq C$ for $k=3,4,5$, where $C=C(\alpha,R,\theta)$ is independent of $\e$.
\end{proof}

A corollary of Theorem \ref{thm..cont.existence.minimizer} shows the symmetry property of $v^\pm$.
\begin{cor}
    Let $v=(v^+,v^-)$ be the dislocation solution of the PN model in Theorem \ref{thm..cont.existence.minimizer}. Then $v$ has the symmetry with respect to $x$: $v^+(x)+v^+(-x)=\frac{1}{2}$ and $v^-(x)+v^-(-x)=-\frac{1}{2}$, $x\in \mathbb{R}$.
\end{cor}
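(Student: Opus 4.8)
The plan is to reduce both asserted identities to a single symmetry relation for the disregistry and then exploit the first-order ODE satisfied by the PN solution. Recall from the proof of Theorem~\ref{thm..cont.existence.minimizer} that $v=(\tfrac12\phi^*,-\tfrac12\phi^*)$, where $\phi^*=v^\perp$ is the unique solution of the first-order equation $\nabla\phi^*=\sqrt{\tfrac{4}{\alpha}\gamma(\phi^*)}$ subject to $\phi^*(0)=\tfrac12$ and the boundary values $\phi^*(-\infty)=0$, $\phi^*(+\infty)=1$. Since $v^+=\tfrac12\phi^*$ and $v^-=-\tfrac12\phi^*$, both claims $v^+(x)+v^+(-x)=\tfrac12$ and $v^-(x)+v^-(-x)=-\tfrac12$ are equivalent to the single statement
\[
    \phi^*(x)+\phi^*(-x)=1,\qquad x\in\mathbb{R},
\]
so it suffices to establish this relation for $\phi^*$.

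First I would introduce the reflected-and-complemented function $\psi(x):=1-\phi^*(-x)$ and check that it solves exactly the same initial value problem as $\phi^*$. The data match immediately: $\psi(0)=1-\phi^*(0)=\tfrac12$, while $\psi(-\infty)=1-\phi^*(+\infty)=0$ and $\psi(+\infty)=1-\phi^*(-\infty)=1$, so $\psi\in\Phi_0$. The essential step is the ODE. Differentiating gives $\nabla\psi(x)=\nabla\phi^*(-x)=\sqrt{\tfrac{4}{\alpha}\gamma(\phi^*(-x))}=\sqrt{\tfrac{4}{\alpha}\gamma(1-\psi(x))}$, and here the periodicity and symmetry of the $\gamma$-surface from Lemma~\ref{lem..gamma.properties} yield $\gamma(1-\psi)=\gamma(-\psi)=\gamma(\psi)$. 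Hence $\nabla\psi=\sqrt{\tfrac{4}{\alpha}\gamma(\psi)}$, the same first-order equation obeyed by $\phi^*$. This combination $\gamma(1-\psi)=\gamma(\psi)$ is the only genuinely nontrivial ingredient; everything else is bookkeeping on signs and boundary values.

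Finally I would invoke the uniqueness of this initial value problem, already established in Step~5 of the proof of Theorem~\ref{thm..cont.existence.minimizer}, to conclude $\psi\equiv\phi^*$, i.e. $\phi^*(x)+\phi^*(-x)=1$; substituting $v^\pm=\pm\tfrac12\phi^*$ then gives the two stated identities. The one point deserving care is precisely this uniqueness step: the right-hand side $\sqrt{(4/\alpha)\gamma(\cdot)}$ fails to be Lipschitz at the integer zeros of $\gamma$, so the standard Picard--Lindel\"of theorem does not apply where $\phi^*\in\{0,1\}$. However, $\phi^*$ is strictly increasing and attains these values only in the limits $x\to\pm\infty$; on every bounded interval the trajectory stays in $(0,1)$, where $\gamma>0$ by Assumption~A6 and the right-hand side is smooth, so local uniqueness holds and propagates across $\mathbb{R}$. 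This is exactly the uniqueness already used for Theorem~\ref{thm..cont.existence.minimizer}, so no additional argument is required and the corollary follows.
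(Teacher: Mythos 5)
Your proposal is correct and follows essentially the same route as the paper: both reduce the claim to the identity $\phi^*(x)+\phi^*(-x)=1$ and obtain it from the symmetry $\gamma(1-\xi)=\gamma(\xi)$ (periodicity plus evenness of the $\gamma$-surface, Lemma~\ref{lem..gamma.properties}) together with uniqueness for the first-order ODE $\nabla\phi^*=\sqrt{(4/\alpha)\gamma(\phi^*)}$ with $\phi^*(0)=\tfrac12$. Your write-up merely makes explicit, via the reflected function $\psi(x)=1-\phi^*(-x)$ and the remark on where the right-hand side is Lipschitz, the steps the paper compresses into ``it is easy to see.''
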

\begin{proof}
    By the symmetry and periodicity of $\gamma$-surface (cf. Lemma \ref{lem..gamma.properties}), we have $\gamma(\frac{1}{2}+\xi)=\gamma(\xi-\frac{1}{2})=\gamma(\frac{1}{2}-\xi)$ for all $\xi\in \mathbb{R}$. Then it is easy to see the solution of ODE $\nabla \phi^*=\sqrt{\frac{4}{\alpha}\gamma(\phi^*)}$ with initial value $\phi^*(0)=\frac{1}{2}$ satisties
    $\phi^*(x)-\frac{1}{2}=\frac{1}{2}-\phi^*(-x)$ for $x\geq 0$. This with the fact that $v=(\frac{1}{2}\phi^*,-\frac{1}{2}\phi^*)$ completes the proof.
\end{proof}

Due to the translation invariant, the second variation of energy at the dislocation solution $\delta^2 E_\text{PN}[v]$ has a zero eigenvalue. The following proposition guarantees that this zero eigenvalue is simple. In other words, the eigenfunctions corresponding to zero eigenvalue form a one-dimension linear space.
\begin{prop}[zero eigenvalue is simple]\label{prop..cont.zero.eigenvalue}
    Suppose that Assumptions A1--A6 hold. Let $v$ be the dislocation solution of the PN model in Theorem \ref{thm..cont.existence.minimizer}. If $f\in C^2$ with $\|f\|_{X_0}<\infty$ and $f$ solves $\delta^2 E_\text{PN}[v]f=0$, then $f=A \nabla v+B$ for some constants $A$ and $B$.
\end{prop}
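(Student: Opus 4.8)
The plan is to decouple the system $\delta^2 E_\text{PN}[v]f=0$ into scalar equations for $f^++f^-$ and $f^\perp=f^+-f^-$, and then to identify the square-integrable solutions of the equation for $f^\perp$ with multiples of $\nabla\phi^*$, where $\phi^*=v^\perp$ is the disregistry of the PN solution of Theorem~\ref{thm..cont.existence.minimizer}. Using the strong form of the second variation from Proposition~\ref{prop..first.second.variations}, namely $-\alpha\nabla^2 f^\pm\pm\gamma''(v^\perp)f^\perp=0$, I would first add the two components. This gives $-\alpha\nabla^2(f^++f^-)=0$, so $f^++f^-$ is an affine function of $x$; since $\|f\|_{X_0}<\infty$ forces $\nabla f^\pm\in L^2(\mathbb{R})$, its slope must vanish and $f^++f^-\equiv 2B$ for some constant $B$. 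Subtracting the two components produces the scalar equation
\begin{equation}
    -\alpha\nabla^2 f^\perp+2\gamma''(\phi^*)f^\perp=0,\label{eq..perp.ode}
\end{equation}
where $f^\perp\in L^2(\mathbb{R})$, again by $\|f\|_{X_0}<\infty$.

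The key observation is that $\nabla\phi^*$ itself solves \eqref{eq..perp.ode}. Indeed, Theorem~\ref{thm..cont.existence.minimizer} gives the first integral $\nabla\phi^*=\sqrt{\frac{4}{\alpha}\gamma(\phi^*)}$; squaring and differentiating once yields $\frac{1}{2}\alpha\nabla^2\phi^*=\gamma'(\phi^*)$, and differentiating again gives $\frac{1}{2}\alpha\nabla^2(\nabla\phi^*)=\gamma''(\phi^*)\nabla\phi^*$, which is precisely \eqref{eq..perp.ode} with $f^\perp$ replaced by $\nabla\phi^*$. Hence both $f^\perp$ and $\nabla\phi^*$ lie in the two-dimensional solution space of the linear ODE \eqref{eq..perp.ode}, and the problem reduces to showing that the $L^2$ solutions of \eqref{eq..perp.ode} form a one-dimensional space spanned by $\nabla\phi^*$.

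This last point is the heart of the matter, and it is where I expect the main work. I would use two properties of $\nabla\phi^*$ from Theorem~\ref{thm..cont.existence.minimizer}: it is strictly positive everywhere, and, since $\gamma(\phi^*)\sim\frac{1}{2}\gamma''(0)(\phi^*)^2$ near the wells with $\gamma''(0)>0$, it decays exponentially as $x\to\pm\infty$. Because $\nabla\phi^*$ has no zeros, reduction of order applies: writing a general solution of \eqref{eq..perp.ode} as $w=(\nabla\phi^*)h$ and substituting reduces \eqref{eq..perp.ode} to $\big((\nabla\phi^*)^2 h'\big)'=0$, so $(\nabla\phi^*)^2 h'\equiv k$ for a constant $k$. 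If $k\neq 0$, then $h'=k/(\nabla\phi^*)^2$ grows exponentially at infinity and $w=(\nabla\phi^*)h$ becomes unbounded, hence not square-integrable; therefore $k=0$ and $w$ is a constant multiple of $\nabla\phi^*$. Applying this to $w=f^\perp$ gives $f^\perp=A\nabla\phi^*$ for some constant $A$.

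It remains to reassemble the components. Combining $f^++f^-=2B$ with $f^+-f^-=f^\perp=A\nabla\phi^*$, and using $\nabla v^+=\frac{1}{2}\nabla\phi^*=-\nabla v^-$ from Theorem~\ref{thm..cont.existence.minimizer}, I obtain $f^\pm=B+A\nabla v^\pm$, that is $f=A\nabla v+B$, as claimed. The only delicate step is the non-integrability of the second solution of \eqref{eq..perp.ode}; the strict positivity and exponential decay of $\nabla\phi^*$ are exactly what force that solution out of $L^2$, and this is what makes the zero eigenvalue simple.
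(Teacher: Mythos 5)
Your proposal is correct and follows essentially the same route as the paper: decouple the system into $f^++f^-$ (harmonic, hence constant by $\|f\|_{X_0}<\infty$) and $f^\perp$, observe that $\nabla\phi^*$ is a positive solution of the same scalar ODE, and note that the reduction-of-order identity $\bigl((\nabla\phi^*)^2 h'\bigr)'=0$ is exactly the paper's statement that the Wronskian $g\nabla h-h\nabla g$ is constant. The only (harmless) difference is how that constant is shown to vanish: the paper uses the decay of $f^\perp$ and $\nabla f^\perp$ at infinity, while you exclude the second solution by its exponential growth, which requires the extra (standard) observation that $\nabla\phi^*$ decays exponentially.
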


\begin{proof}
    Let $g=\nabla v$.
    Thus we have
    \begin{eqnarray*}
        &&\left(\delta^2 E_\text{PN}[v]f\right)^\pm=-\alpha \nabla^2 f^\pm \pm\gamma''(v^\perp)(f^+-f^-)=0,\\
        &&\left(\delta^2 E_\text{PN}[v]g\right)^\pm=-\alpha \nabla^2 g^\pm \pm\gamma''(v^\perp)(g^+-g^-)=\nabla \left[-\alpha \nabla^2 v^\pm \pm\gamma'(v^\perp)\right]=0.
    \end{eqnarray*}
    The first equation implies $\nabla^2 f^+ (x)=-\nabla^2 f^-(x)$ for all $x\in \mathbb{R}$. Thus $\nabla f^+(x)+\nabla f^-(x)$ is a constant for all $x\in \mathbb{R}$. Since $f\in C^2$ and $\|f\|_{X_0}<\infty$, we have $\nabla f^+(x)=-\nabla f^-(x)$ for all $x\in \mathbb{R}$. Thus $f^+(x)=-f^-(x)+2B$ for some constant $B$ and all $x\in \mathbb{R}$. Let $h(x)=f(x)-B$. Then $f^+-f^-=2f^+-2B=2h^+=-2h^-$. Note that $g^+(x)=-g^-(x)$ for all $x\in \mathbb{R}$. Then we have
    \begin{eqnarray}
        &&-\alpha \nabla^2 h^\pm+2\gamma''(v^\perp)h^\pm=0,\\
        &&-\alpha \nabla^2 g^\pm+2\gamma''(v^\perp)g^\pm=0.
    \end{eqnarray}
    Eliminating $\gamma''(v^\perp)$ term leads to
    \begin{eqnarray*}
        -\alpha g^\pm\nabla^2 h^\pm+\alpha h^\pm \nabla^2 g^\pm=0\,\,\text{or}\,\,\alpha \nabla \left(g^\pm \nabla h^\pm-h^\pm\nabla g^\pm\right)=0.
    \end{eqnarray*}
    Thus $g^\pm\nabla h^\pm-h^\pm\nabla g^\pm$ is a constant.
    Since $f\in C^2$ and $\|f\|_{X_0}<\infty$, we can derive that $h^\pm, \nabla h^\pm \rightarrow 0$ as $|x|\rightarrow \infty$. Hence $g^\pm\nabla h^\pm-h^\pm\nabla g^\pm=0$ for all $x\in \mathbb{R}$. By strictly monotonicity of $v^\pm$ (cf. Theorem \ref{thm..cont.existence.minimizer}), we have $g^\pm=\nabla v^\pm \neq 0$. Thus
    $(g^\pm)^2\nabla \left(\frac{h^\pm}{g^\pm}\right)=g^\pm\nabla h^\pm-h^\pm \nabla g^\pm=0$. Therefore $h=A g=A \nabla v$ and $f=A \nabla v +B$ for some constants $A$ and $B$.
\end{proof}

\begin{rmk}
    The physical meaning of Proposition \ref{prop..cont.zero.eigenvalue} is that the dislocation solution $v$, satisfying the boundary conditions but not the center condition, is invariant under translation. Indeed, let us consider an infinitesimal translation $\D x$ of the dislocation solution. The translated displacement field is $v(x+\D x)$ and hence the perturbation is $v(x+\D x)-v(x)=(\nabla v)\D x$. This perturbation mode is exactly the eigenfunction, in the previous proposition, corresponding to the zero eigenvalue.
\end{rmk}

Now we are ready to obtain the stability result of the PN model. Later, we will see that the stability of the atomistic model can be achieved by this PN stability with the small stability gap Assumption A7.
\begin{prop}[stability of PN model]\label{prop..PN.stability}
    Suppose that Assumptions A1--A6 hold. Let $v$ be the dislocation solution of the PN model in Theorem \ref{thm..cont.existence.minimizer}.
    There exists a constant $\kappa=\kappa(R,\theta,\alpha,\gamma''(0))>0$ such that for $f\in X_0$, we have
    \begin{eqnarray}
       \left\langle \delta^2 E_\text{PN}[v]f, f\right\rangle\geq \kappa\|f\|_{X_0}^2.
    \end{eqnarray}
\end{prop}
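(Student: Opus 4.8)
The plan is to diagonalise the quadratic form and then extract coercivity from a compactness argument that isolates the unique zero mode. By \eqref{eq..cont.second.variation.weak} together with $v^+=-v^-$ and $v^\perp=\phi^*$,
\[
\langle\delta^2 E_\text{PN}[v]f,f\rangle=\int_{\mathbb{R}}\left(\alpha|\nabla f^+|^2+\alpha|\nabla f^-|^2+\gamma''(\phi^*)(f^\perp)^2\right)\D x .
\]
Setting $p=f^++f^-$ and $q=f^+-f^-=f^\perp\in H^1(\mathbb{R})$, both the form and the norm split:
\[
\langle\delta^2 E_\text{PN}[v]f,f\rangle=\tfrac{\alpha}{2}\|\nabla p\|^2+Q_q(q),\qquad \|f\|_{X_0}^2=\tfrac12\|\nabla p\|^2+\tfrac12\|\nabla q\|^2+\|q\|^2,
\]
where $Q_q(q):=\tfrac{\alpha}{2}\|\nabla q\|^2+\int_{\mathbb{R}}\gamma''(\phi^*)q^2\,\D x$, and the center condition $f^\pm(0)=0$ reads $p(0)=q(0)=0$. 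The $p$-part is coercive for free, $\tfrac{\alpha}{2}\|\nabla p\|^2=\alpha\cdot\tfrac12\|\nabla p\|^2$, so everything reduces to producing a constant $\kappa_q>0$ with $Q_q(q)\ge\kappa_q\big(\tfrac12\|\nabla q\|^2+\|q\|^2\big)$ for every $q\in H^1(\mathbb{R})$ with $q(0)=0$; then $\kappa=\min(\alpha,\kappa_q)$ works.

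Two facts drive the scalar estimate. First, by Theorem \ref{thm..cont.existence.minimizer} one has $\phi^*\to 0$ as $x\to-\infty$ and $\phi^*\to 1$ as $x\to+\infty$, so by periodicity of $\gamma$ (Lemma \ref{lem..gamma.properties}) the potential satisfies $\gamma''(\phi^*(x))\to\gamma''(0)>0$ at $\pm\infty$; writing $W(x):=\gamma''(\phi^*(x))-\gamma''(0)$, $W$ is bounded and decays to $0$, and
\[
Q_q(q)=\tfrac{\alpha}{2}\|\nabla q\|^2+\gamma''(0)\|q\|^2+\int_{\mathbb{R}}Wq^2\,\D x
\]
is a coercive quadratic form plus the term $\int_{\mathbb{R}}Wq^2$, which is weakly continuous on $H^1(\mathbb{R})$ (strong $L^2$ convergence on bounded intervals together with the smallness of $W$ near infinity); hence $Q_q$ is weakly lower semicontinuous. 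Second, $\psi_0:=\nabla\phi^*$ is a positive $H^1$ zero mode of $L_q:=-\tfrac{\alpha}{2}\nabla^2+\gamma''(\phi^*)$: it is the $f^\perp$-component of the translation mode $\nabla v$ of Proposition \ref{prop..cont.zero.eigenvalue}, and $\psi_0>0$ by the strict monotonicity of $\phi^*$ in Theorem \ref{thm..cont.existence.minimizer}.

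I would then argue by contradiction. By global minimality of $v$ (Theorem \ref{thm..cont.existence.minimizer}), testing with $f=(q/2,-q/2)\in X_0$ gives $Q_q(q)\ge0$ whenever $q(0)=0$, so a failure of coercivity produces $q_n\in H^1(\mathbb{R})$ with $q_n(0)=0$, $\|q_n\|_{H^1}=1$ and $Q_q(q_n)\to0$. Pass to a weak limit $q_n\rightharpoonup q_*$ in $H^1$: the trace survives so $q_*(0)=0$, weak continuity gives $\int_{\mathbb{R}}Wq_n^2\to\int_{\mathbb{R}}Wq_*^2$, and lower semicontinuity yields $0\le Q_q(q_*)\le\liminf Q_q(q_n)=0$, hence $Q_q(q_*)=0$. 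Thus $q_*$ minimises $Q_q$ over $\{q(0)=0\}$, so $L_q q_*=0$ on $(-\infty,0)$ and on $(0,\infty)$ separately. On each half-line the decaying solutions of $L_q u=0$ form a one-dimensional space spanned by $\psi_0$ — the second fundamental solution grows like $e^{\sqrt{2\gamma''(0)/\alpha}\,|x|}$ because $\gamma''(\phi^*)\to\gamma''(0)>0$ — so $q_*=c_\pm\psi_0$ there; continuity at $0$ with $\psi_0(0)>0$ and $q_*(0)=0$ forces $c_\pm=0$, i.e. $q_*=0$. But then $q_n\rightharpoonup0$ gives $\int_{\mathbb{R}}Wq_n^2\to0$, whence $\tfrac{\alpha}{2}\|\nabla q_n\|^2+\gamma''(0)\|q_n\|^2\to0$ and $\|q_n\|_{H^1}\to0$, contradicting $\|q_n\|_{H^1}=1$. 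This produces $\kappa_q>0$ and hence the proposition.

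The crux — and the main obstacle — is that $\gamma''(\phi^*)$ is sign-indefinite, being negative near the dislocation centre where $\phi^*\approx\tfrac12$, so coercivity cannot be read off pointwise and must be recovered globally. Two ingredients resolve this: the decay $W\to0$ at infinity, which both restores the compactness that $H^1(\mathbb{R})$ lacks and renders the indefinite part weakly continuous, and the center condition $q(0)=0$, which is exactly what annihilates the translation zero mode $\psi_0=\nabla\phi^*$ from Proposition \ref{prop..cont.zero.eigenvalue}. Finally, one checks that $\kappa$ depends only on $R,\theta,\alpha,\gamma''(0)$: through Lemmas \ref{lem..decay.properties.V.U}--\ref{lem..regularity.gamma} these quantities determine $\gamma$, hence the profile $\phi^*$ and the decay of $W$, so no dependence on $\e$ enters.
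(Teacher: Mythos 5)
Your proof is correct, and while it shares the overall architecture of the paper's argument (a contradiction/compactness argument that uses nonnegativity of the second variation at the global minimizer, weak $H^1$ compactness to extract a limit, identification of that limit with the translation mode, and the centering condition at $x=0$ to kill it), the implementation is genuinely different in two places. First, you diagonalise via $p=f^++f^-$, $q=f^\perp$, which decouples the quadratic form, makes the $p$-part coercive for free, and reduces the whole problem to the scalar Schr\"odinger operator $-\tfrac{\alpha}{2}\nabla^2+\gamma''(\phi^*)$ acting on $q$ with $q(0)=0$; the paper works with the pair $(f^+,f^-)$ throughout and only effectively performs this reduction inside the proof of Proposition \ref{prop..cont.zero.eigenvalue}. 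Second, you identify the kernel by a direct ODE argument on each half-line $(-\infty,0)$ and $(0,\infty)$ separately, using that the decaying solutions form a one-dimensional space spanned by $\psi_0=\nabla\phi^*>0$. This is actually slightly more careful than the paper's route: because the minimisation is constrained by $q(0)=0$, the Euler--Lagrange equation is only guaranteed away from the origin (a Dirac mass at $x=0$ is a priori possible), whereas the paper applies Proposition \ref{prop..cont.zero.eigenvalue} — which requires a $C^2(\mathbb{R})$ solution of the equation on all of $\mathbb{R}$ — after Schauder estimates, implicitly discarding that possibility; your half-line argument sidesteps the issue entirely. Your final contradiction ($\|q_n\|_{H^1}\to0$ from weak continuity of $\int_{\mathbb{R}}Wq_n^2\,\D x$) is equivalent in effect to the paper's lower bound $\liminf_n I[f^n]\ge\min\{\alpha,\tfrac12\gamma''(0)\}$ obtained by splitting the integral at $\pm K$. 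One small caveat: your closing claim that $R,\theta,\alpha,\gamma''(0)$ ``determine $\gamma$, hence the profile $\phi^*$'' overstates what Lemmas \ref{lem..decay.properties.V.U}--\ref{lem..regularity.gamma} provide (they give bounds on $\gamma$, not the function itself), so the compactness argument as written only yields a $\kappa$ depending on the full potential $U$; but the paper's own proof is non-quantitative in exactly the same way, so this does not distinguish the two.
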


\begin{proof}
    We prove the statement by contradiction.
    Suppose there exists a sequence $\left\{f^n\right\}_{n=1}^{\infty}$ satisfying the following conditions:
    \begin{eqnarray}
        \|f^n\|_{X_0}=1\,\,\,\,\text{and}\,\,\,\,
        \textstyle \frac{1}{n}\|f^n\|_{X_0}^2>\left\langle \delta^2 E_\text{PN}[v]f^n,f^n\right\rangle
        =\textstyle I[f^n],
    \end{eqnarray}
    where the functional
    $
        I[f]=\int_{\mathbb{R}}\left\{\alpha |\nabla f^{+}|^2+\alpha |\nabla f^{-}|^2+\gamma''(v^\perp)(f^{\perp})^2\right\}\D x
    $.

    The uniformly boundedness $\|f^n\|_{X_0}=1$ implies that there exists a subsequence $\left\{f^{k_n}\right\}_{n=1}^{\infty}$ with $f^*\in X_0$ satisfying
    (1) $f^{k_n,\pm}_x\rightarrow f^{*,\pm}_x$ weakly in $L^2$ and (2) $f^{k_n,\perp}\rightarrow f^{*,\perp}$ strongly in $L^2$.
    By lower semi-continuity, we have $I[f^*]\leq 0$.
    Since $v$ minimizes the energy $E_\text{c}$, we have $I[f^*]\geq 0$. Thus $f^*$ minimizes the functional $I[f]$
    and hence solves Euler--Lagrange equation in the weak sense
    \begin{eqnarray*}
        -\alpha \nabla^2 f^{*,\pm}\pm\gamma''(v^\perp)f^{*,\perp}=0.
    \end{eqnarray*}
    Note that $\gamma''(v^\perp)$ is continuous by Lemma \ref{lem..regularity.gamma}. We apply the Schauder estimate and obtain $f^{*,\pm}\in C^{2,\alpha}_{\text{loc}}(\mathbb{R})$ \cite{Gilbarg2001-p-}. Proposition \ref{prop..cont.zero.eigenvalue} implies $f^*=A \nabla v+B$. Note that $A \nabla v^\perp(0)=f^{*,\perp}(0)=0$ and $\nabla v^\perp(0)\neq 0$. Then $A=0$ and $f^{*,\pm}\equiv B$ for some constant $B\in\mathbb{R}$.
    There exists $K<\infty$, such that $\gamma''(v^\perp(x))\geq \frac{1}{2}\gamma''(0)>0$ on $\mathbb{R}\backslash(-K,K)$. Notice that $H^1(\mathbb{R})$ can be embedded in $C^{0,\frac{1}{2}}(\mathbb{R})$. Utilizing Arzela--Ascoli theorem, we obtain $f^{k_n,\perp}\rightarrow f^{*,\perp}\equiv 0$ uniformly on $(-K,K)$. Therefore
    \begin{eqnarray*}
        \lim_{n\rightarrow\infty}I[f^n]
        &\geq&-\sup_{x\in \mathbb{R}}|\gamma''(v^\perp(x))|\lim_{n\rightarrow\infty}\int_{-K}^{K}(f^{n,\perp})^2\D x\\
        &&+\alpha\lim_{n\rightarrow\infty}\int_{\mathbb{R}}\left\{|\nabla f^{n,+}|^2+|\nabla f^{n,-}|^2\right\}\D x+\lim_{n\rightarrow\infty}\int_{\mathbb{R}\backslash(-K,K)}\gamma''(v^\perp)(f^{n,\perp})^2\D x\\
        &\geq&\min\left\{\alpha,\frac{1}{2}\gamma''(0)\right\}\lim_{n\rightarrow\infty}\left\{\int_{\mathbb{R}}\left( |\nabla f^{n,+}|^2+|\nabla f^{n,-}|^2\right)\D x+\int_{\mathbb{R}\backslash(-K,K)}(f^{n,\perp})^2\D x\right\}\\
        &=&\min\left\{\alpha,\frac{1}{2}\gamma''(0)\right\}>0.
    \end{eqnarray*}
    This contradicts with $\lim_{n\rightarrow\infty}I[f^n] \leq\lim_{n\rightarrow\infty}\frac{1}{n}\|f^n\|_{X_0}^2=0$. Hence the original statement holds.
\end{proof}

\section{Consistency of the PN Model}
In this section, the force consistency is obtained at the dislocation solution of the PN model. More precisely, the force in the atomistic model is $O(\e^2)$-close to its counterpart in the PN model, provided that the displacement of the atomistic model is exactly the dislocation solution in Theorem \ref{thm..cont.existence.minimizer}. This asymptotic analysis is not only formal but also rigorous in the sense that we estimate the truncation error in $X_\e$ norm.

Here we first provide several lemmata connecting the discrete Sobolev spaces.
\begin{lem}[property of discrete Sobolev norms]\label{lem..space.L2.epsilon}
    For $k\in \mathbb{N}$, we have
    \begin{eqnarray}
        \|f\|_\e\leq \|f\|_{\e,k}\leq 2^{k+1}\max\{1,\e^{-k}\}\|f\|_\e.
    \end{eqnarray}
\end{lem}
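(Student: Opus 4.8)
The plan is to treat the two inequalities separately, the lower bound being essentially free and all the work going into tracking the constant in the upper bound. For the lower bound, observe that $\|f\|_{\e,k}^2 = \sum_{j=0}^{k}\|D^{j}f\|_\e^2$ (grouping the defining double sum by the order $j$ of the difference, with $\|D^{j}f\|_\e^2=\e\sum_i|D^{j}f_i|^2$), so that dropping every term except $j=0$, which is exactly $\|f\|_\e^2$, gives $\|f\|_\e \le \|f\|_{\e,k}$ at once since the remaining terms are nonnegative.

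For the upper bound I would first estimate a single difference operator. Let $S$ be the shift $(Sf)_i = f_{i+1}$; it is an isometry of $L^2_\e$ since reindexing gives $\|Sf\|_\e = \|f\|_\e$. As $Df = \e^{-1}(S-I)f$, the triangle inequality yields $\|Df\|_\e \le \e^{-1}(\|Sf\|_\e+\|f\|_\e) = 2\e^{-1}\|f\|_\e \le 2\max\{1,\e^{-1}\}\|f\|_\e$. Iterating this $j$ times gives, for every $j$,
\[
    \|D^{j}f\|_\e \le a^{j}\|f\|_\e, \qquad a:=2\max\{1,\e^{-1}\}\ge 2.
\]

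I would then assemble the full norm. Since $a^2\ge 4>1$, summing the geometric series gives
\[
    \|f\|_{\e,k}^2=\sum_{j=0}^{k}\|D^{j}f\|_\e^2
    \le \|f\|_\e^2\sum_{j=0}^{k}a^{2j}
    =\frac{a^{2(k+1)}-1}{a^2-1}\,\|f\|_\e^2
    \le \frac{a^2}{a^2-1}\,a^{2k}\|f\|_\e^2
    \le \tfrac{4}{3}\,a^{2k}\|f\|_\e^2 .
\]
Taking square roots and using $a^{k}=2^{k}(\max\{1,\e^{-1}\})^{k}=2^{k}\max\{1,\e^{-k}\}$ together with $\tfrac{2}{\sqrt3}\le 2$ produces $\|f\|_{\e,k}\le \tfrac{2}{\sqrt3}a^{k}\|f\|_\e\le 2^{k+1}\max\{1,\e^{-k}\}\|f\|_\e$, as claimed.

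The only delicate point, and the step I would flag as the main obstacle, is keeping the constant at $2^{k+1}$ rather than something growing with $k$. The naive route of bounding each of the $k+1$ summands by the largest one, $a^{2k}$, costs a factor $\sqrt{k+1}$ and overshoots the target for large $k$; summing the geometric progression exactly avoids this, because the resulting factor $a^2/(a^2-1)\le 4/3$ is comfortably absorbed by the slack between $2/\sqrt3$ and $2$. Everything else is routine, and the argument is uniform in $\e>0$ thanks to the $\max\{1,\e^{-1}\}$ bookkeeping.
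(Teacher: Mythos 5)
Your proof is correct and follows essentially the same route as the paper: the lower bound by dropping all terms except $j=0$, the upper bound by iterating the triangle-inequality estimate $\|Df\|_\e\le 2\e^{-1}\|f\|_\e$ to get $\|D^jf\|_\e^2\le 2^{2j}\e^{-2j}\|f\|_\e^2$, and then summing the geometric series to absorb the constant into $2^{2k+2}$. Your explicit tracking of the factor $a^2/(a^2-1)\le 4/3$ is just a more careful writing of the paper's final summation step.
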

\begin{proof}
    By definition, we have $\|f\|_\e^2\leq \|f\|_{\e,k}^2$ and $\|D^{j} f\|_\e^2\leq 4\e^{-2}\|D^{j-1} f\|_\e^2\leq 2^{2j}\e^{-2j}\|f\|_\e^2$ for $j=1,\cdots,k$. Then
    $
        \|f\|_{\e,k}^2
        \leq\sum_{j=0}^{k}2^{2j}\e^{-2j}\|f\|_\e^2
        \leq 2^{2k+2}\max\{1,\e^{-2k}\}\|f\|_\e^2
    $.
\end{proof}

\begin{lem}[property of $M_\e$]\label{lem..subspace.M}
The linear space $M_\e$ is a Hilbert space with inner product $\langle\cdot,\cdot\rangle_{\e}$. Moreover, we have $M_\e\subset H^1_\e$ and
    \begin{eqnarray}
        \|f\|^2_{\e,1}\leq \|f\|^2_{X_\e}\leq 2\|f\|^2_{\e,1}.\label{eq..M_e.norm}
    \end{eqnarray}
\end{lem}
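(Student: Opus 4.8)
The plan is to treat the two-sided estimate \eqref{eq..M_e.norm} and the Hilbert space assertion separately. The estimate \eqref{eq..M_e.norm} will come from a direct computation that uses the two symmetry relations built into the definition \eqref{eq..def.M_e}. The Hilbert space claim together with the inclusion $M_\e\subset H^1_\e$ will then be deduced from Lemma \ref{lem..space.L2.epsilon}: on the lattice the norms $\|\cdot\|_\e$ and $\|\cdot\|_{\e,1}$ are equivalent (with an $\e$-dependent constant), so $L^2_\e$ and $H^1_\e$ coincide as sets, and it suffices to exhibit $M_\e$ as a closed linear subspace of $L^2_\e$.

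For \eqref{eq..M_e.norm} I would first unpack the constraints. For $f=(f^+,f^-)\in M_\e$ the relation $f^+_i=-f^-_i$ gives $f^\perp_i=2f^+_i$ and $Df^-_i=-Df^+_i$ for every $i$, whence $\|f^-\|_\e=\|f^+\|_\e$, $\|Df^-\|_\e=\|Df^+\|_\e$ and $\|f^\perp\|_\e^2=4\|f^+\|_\e^2$. Substituting these into the definitions of the two norms yields the closed forms $\|f\|_{X_\e}^2=2\|Df^+\|_\e^2+4\|f^+\|_\e^2$ and $\|f\|_{\e,1}^2=2\|f^+\|_\e^2+2\|Df^+\|_\e^2$. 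The lower bound then follows from $\|f\|_{X_\e}^2-\|f\|_{\e,1}^2=2\|f^+\|_\e^2\ge 0$ and the upper bound from $2\|f\|_{\e,1}^2-\|f\|_{X_\e}^2=2\|Df^+\|_\e^2\ge 0$; I would emphasize that both constants are independent of $\e$, which is the whole point of this sharp equivalence as opposed to the $\e$-dependent one of Lemma \ref{lem..space.L2.epsilon}.

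For the Hilbert space claim I would show that the elements of $M_\e$ of finite $L^2_\e$ norm form a closed linear subspace of $L^2_\e$. The defining conditions $f^+_i+f^-_i=0$ and $f^+_i+f^+_{-i}=0$ are linear, so $M_\e$ is a subspace. If $f^n\to f$ in $L^2_\e$, then the elementary bound $|g_i|\le \e^{-1/2}\|g\|_\e$ gives $f^{n,\pm}_i\to f^\pm_i$ for each fixed $i$, so both conditions pass to the limit and $f\in M_\e$; hence $M_\e$ is closed, therefore complete, and is a Hilbert space under $\langle\cdot,\cdot\rangle_\e$. The inclusion $M_\e\subset H^1_\e$ is then immediate from Lemma \ref{lem..space.L2.epsilon}, since finiteness of $\|f\|_\e$ forces finiteness of $\|f\|_{\e,1}$.

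The computations are entirely routine, so there is no serious obstacle; the only step that warrants a word of care is the closedness argument, where one must upgrade $L^2_\e$ convergence to pointwise convergence of the individual entries in order to preserve the index-reflection constraint $f^+_i=-f^+_{-i}$, and this is precisely what the discrete embedding $|g_i|\le \e^{-1/2}\|g\|_\e$ supplies.
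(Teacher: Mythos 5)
Your proposal is correct and follows essentially the same route as the paper: the two-sided bound \eqref{eq..M_e.norm} rests on the identity $\|f^\perp\|_\e^2=4\|f^+\|_\e^2=2\|f\|_\e^2$ for $f\in M_\e$, which is exactly the paper's one-line observation, and your closed-subspace-of-$L^2_\e$ argument is a routine filling-in of what the paper dismisses as ``easy to check.'' The only point worth noting is that you correctly read the implicit finiteness convention in the definition of $M_\e$ and made it explicit, which the paper leaves tacit.
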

\begin{proof}
    The Hilbert space is easy to check. And Eq. \eqref{eq..M_e.norm} follows from $\|f^\perp\|^2_\e=2\|f\|^2_\e$ for $f\in M_\e$.
\end{proof}

\begin{lem}[property of finite difference operator $D^\pm_s$]\label{lem..sum.D_s.f.L2}
    If $s\in \mathbb{Z}^*$ and $f\in L^2_\e$, then
    \begin{eqnarray}
        \|D^\pm_s f\|_\e\leq |s|\|Df\|_\e.
    \end{eqnarray}
\end{lem}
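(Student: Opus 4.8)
The plan is to exploit the elementary but crucial fact that the long-range difference $D^\pm_s f$ telescopes into a sum of exactly $|s|$ unit differences $Df$, after which the desired bound follows from the triangle inequality in $L^2_\e$ together with the translation-invariance of that norm. First I would reduce to the case $s>0$. Since
\begin{eqnarray*}
    D^+_s f(x)=\frac{f(x+\e s)-f(x)}{\e}=-\frac{f(x)-f(x-\e(-s))}{\e}=-D^-_{-s}f(x),
\end{eqnarray*}
we have $\|D^+_s f\|_\e=\|D^-_{-s}f\|_\e$, and because $|{-s}|=|s|$ it suffices to establish the estimate for $D^+_s$ and $D^-_s$ with $s$ a positive integer.

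The heart of the proof is the telescoping identity. For $s>0$, writing $f(\e i+\e s)-f(\e i)$ as a sum of consecutive unit increments and dividing by $\e$ gives
\begin{eqnarray*}
    D^+_s f_i=\sum_{j=0}^{s-1}(Df)_{i+j},\qquad D^-_s f_i=\sum_{j=1}^{s}(Df)_{i-j},
\end{eqnarray*}
the second identity being the analogous telescoping of $f(\e i)-f(\e i-\e s)$. Each summand is a shifted copy of $Df$, and the discrete shift is an isometry of $L^2_\e$: for any fixed integer $j$ the reindexing $\e\sum_{i\in\mathbb{Z}}|(Df)_{i+j}|^2=\e\sum_{i\in\mathbb{Z}}|(Df)_i|^2$ shows that $\|(Df)(\cdot+\e j)\|_\e=\|Df\|_\e$.

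Applying the triangle inequality in $L^2_\e$ to the telescoping representation then yields
\begin{eqnarray*}
    \|D^+_s f\|_\e\leq\sum_{j=0}^{s-1}\|(Df)(\cdot+\e j)\|_\e=\sum_{j=0}^{s-1}\|Df\|_\e=s\,\|Df\|_\e=|s|\,\|Df\|_\e,
\end{eqnarray*}
and the identical argument applied to the second identity gives $\|D^-_s f\|_\e\leq|s|\,\|Df\|_\e$. Combined with the sign reduction above, this proves the claim for all $s\in\mathbb{Z}^*$. There is no substantial obstacle in this lemma; the only points demanding care are the index bookkeeping in the two telescoping sums and the observation that, because the triangle inequality is applied to $|s|$ terms of equal norm, one obtains the linear factor $|s|$ rather than $\sqrt{|s|}$.
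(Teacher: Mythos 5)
Your proof is correct and takes essentially the same route as the paper: reduce to $s>0$ and telescope $D^{\pm}_s f$ into $|s|$ unit differences $Df$. The only (immaterial) difference is in the finishing step --- the paper applies the Cauchy--Schwarz inequality pointwise to $(D^+_s f_i)^2$ and then sums over $i$, whereas you apply the triangle inequality in $L^2_\e$ together with shift invariance of the norm; both yield the same linear factor $|s|$.
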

\begin{proof}
    Without loss of generality, we suppose $s>0$ and prove the result for $D^+_s f$.
    By the Cauchy--Schwarz inequality, we have
    $
        (D_s^+ f^\pm_i)^2=(\sum_{j=i}^{i+s-1}D f^\pm_j)^2\leq s\sum_{j=i}^{i+s-1} (Df^\pm_j)^2
    $.
    Then $\|D^+_s f\|^2_\e\leq s^2\|D f\|_\e^2$ follows this.
\end{proof}

The following summability lemma is quite helpful in estimating the truncation errors (cf. Proposition \ref{prop..consistency}).
\begin{lem}[summability of $v$]\label{lem..u^k_is.L2norm}
    Let $v$ be the dislocation solution of the PN model in Theorem \ref{thm..cont.existence.minimizer}. Given $k=0,1,\cdots,4$ and $s\in \mathbb{Z}^*$, $\e\leq 1$, we have
    \begin{eqnarray}
        \e\sum_{i\in\mathbb{Z}}v_{k,s,i}\leq C|s|\,\,\text{and}\,\,
        \|v_{k,s}\|_\e^2\leq C |s|,\label{eq..u_is^k_L2norm}
    \end{eqnarray}
    where $C=C(\|\nabla v\|_{W^{k,1}},\|v\|_{W^{k,\infty}})$ is independent of $s$.
\end{lem}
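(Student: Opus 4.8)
The plan is to reduce both bounds to a single local estimate on the window $I_i := [\e(i-|s|),\e(i+|s|)]$, whose width is $|I_i| = 2\e|s|$, and then to exploit the bounded overlap of these windows. Write $g = \nabla^k v^+$; by Theorem \ref{thm..cont.existence.minimizer} together with Lemma \ref{lem..regularity.gamma}, $g$ is smooth, and both $g$ and $g' = \nabla^{k+1}v^+$ are absolutely continuous and decaying, hence lie in $L^1\cap L^\infty$. First I would record the elementary sup-to-integral inequality: for $x,y\in I_i$ one has $g(x) = g(y) + \int_y^x g'$, so $|g(x)| \le |g(y)| + \int_{I_i}|g'|$; averaging in $y$ over $I_i$ and then taking the supremum in $x$ yields
\[
    v_{k,s,i} = \sup_{x\in I_i}|g(x)| \le \frac{1}{2\e|s|}\int_{I_i}|g| + \int_{I_i}|g'|.
\]

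The second step is the overlap count. For a fixed point $y$, the window $I_i$ contains $y$ if and only if $y/\e - |s| \le i \le y/\e + |s|$, so $y$ lies in at most $2|s|+1 \le 3|s|$ of the windows. Multiplying the local estimate by $\e$, summing over $i$, and interchanging the (nonnegative) sum and integral by Tonelli gives
\[
    \e\sum_{i\in\mathbb{Z}} v_{k,s,i} \le \frac{1}{2|s|}\int_{\mathbb{R}}|g|\Big(\sum_i \mathbf{1}_{I_i}\Big) + \e\int_{\mathbb{R}}|g'|\Big(\sum_i\mathbf{1}_{I_i}\Big) \le \tfrac{3}{2}\|g\|_{L^1} + 3|s|\,\|g'\|_{L^1},
\]
where $\e\le 1$ was used in the last term. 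Since $|s|\ge 1$, both summands are $\le C|s|$, with $C$ controlled by $\|\nabla^k v^+\|_{L^1}$ and $\|\nabla^{k+1}v^+\|_{L^1}$, each of which is majorized by $\|\nabla v\|_{W^{k,1}}$. This proves the first inequality in \eqref{eq..u_is^k_L2norm}.

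For the second inequality I would simply bound one of the two factors of $v_{k,s,i}$ by the uniform bound $\sup_i v_{k,s,i} = \|\nabla^k v^+\|_{L^\infty}\le \|v\|_{W^{k,\infty}}$ and hand the remaining factor to the estimate just obtained:
\[
    \|v_{k,s}\|_\e^2 = \e\sum_{i\in\mathbb{Z}} v_{k,s,i}^2 \le \|\nabla^k v^+\|_{L^\infty}\,\e\sum_{i\in\mathbb{Z}} v_{k,s,i} \le C|s|,
\]
which yields the claimed dependence $C = C(\|\nabla v\|_{W^{k,1}},\|v\|_{W^{k,\infty}})$.

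I expect the only genuinely substantive point to be the overlap step and, specifically, extracting the correct linear-in-$|s|$ growth. The mechanism is a cancellation between the window width $2\e|s|$ in the denominator of the averaged term and the overlap multiplicity $\sim 2|s|$: the averaged term thereby contributes no power of $|s|$, while the derivative term picks up exactly one factor of $|s|$. The finiteness of the constants — in particular $\nabla^{k+1}v^+ \in L^1$ — rests on the exponential decay of the derivatives of the PN solution, which follows from the first-order relation $\nabla\phi^* = \sqrt{(4/\alpha)\gamma(\phi^*)}$ near $\pm\infty$ together with $\gamma''(0)>0$; this is precisely where the regularity and monotonicity established in Theorem \ref{thm..cont.existence.minimizer} are invoked.
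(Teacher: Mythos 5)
Your proof is correct and follows essentially the same route as the paper's: the supremum over the window of width $2\e|s|$ is bounded by the window average of $\nabla^k v^+$ plus the integral of $\nabla^{k+1}v^+$ over the window, and the linear-in-$|s|$ growth then comes from the bounded overlap of the windows (the paper realizes the overlap count by splitting the index set into residue classes modulo $2s$, you by an indicator-function count with Tonelli --- the same estimate), with the second inequality obtained identically via $v_{k,s,i}^2\le \|v\|_{W^{k,\infty}}\,v_{k,s,i}$. The only caveat, which your argument shares with the paper's own proof, is that for $k=0$ the quantity $\|\nabla^0 v^+\|_{L^1}=\|v^+\|_{L^1}$ is infinite (since $v^+\to\tfrac{1}{2}$ at $+\infty$), so the first bound is vacuous in that case; this is immaterial because the lemma is only ever invoked for $k\ge 1$.
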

\begin{proof}
    Without loss of generality, we suppose that $s>0$.
    For each $i\in \mathbb{Z}$, there exists some $\xi_i$ with $\e(i-s)\leq \xi_i\leq\e(i+s)$ satisfying $v_{k,s,i}=|\nabla^k v^+(\xi_i)|$. Note that
    $
        \sum_{i\in \mathbb{Z}}v_{k,s,i}=\sum_{j=0}^{2s-1}\sum_{n\in \mathbb{Z}}v_{k,s,2ns+j}.
    $
    Then for each $j\in \{0,1,2,\cdots, 2s-1\}$, we have
    \begin{eqnarray*}
        2s\e\sum_{n\in \mathbb{Z}}v_{k,s,2ns+j}
        &\leq&  \sum_{n\in \mathbb{Z}}\int_{\e (2(n-1)s+j)}^{\e (2ns+j)}|\nabla^k v^+(\xi_{2ns+j})-\nabla^k v^+(x)|+|\nabla^k v^+(x)|\D x\\
         &\leq&  \sum_{n\in \mathbb{Z}}\int_{\e (2(n-1)s+j)}^{\e (2ns+j)}
        \left(\int_{x}^{\xi_{2ns+j}}|\nabla^{k+1}v^+(\xi)|\D \xi\right) \D x+\|\nabla^k v^+\|_{L^1}\\
         &\leq& 2s\e\|\nabla^{k+1} v^+\|_{L^1}+\|\nabla^k v^+\|_{L^1}.
    \end{eqnarray*}
    Hence
    $
        \e\sum_{i\in \mathbb{Z}} v_{k,s,i}\leq 2s\e \|\nabla^{k+1}v^+\|_{L^1}+\|\nabla^{k}v^+\|_{L^{1}}\leq2s\|\nabla v^+\|_{W^{k,1}}
    $. Obviously, we have $\esssup_{i\in \mathbb{Z}} v_{k,s,i}\leq \|v^+\|_{W^{k,\infty}}$.
    Eq. \eqref{eq..u_is^k_L2norm} follows this.
\end{proof}

\begin{prop}[consistency of PN model]\label{prop..consistency}
    Suppose that Assumptions A1--A6 hold. Let $v$ be the dislocation solution of the PN model in Theorem \ref{thm..cont.existence.minimizer}, then there exist $C$ and $\e_0$ such that for $0<\e< \e_0$ and $f\in M_\e$ we have
    \begin{eqnarray}\label{eq..consistence.Lp}
        |\langle\delta E_\text{a}[v]-\delta E_\text{PN}[v],f\rangle_\e|\leq C\e^2\|f\|_{X_\e}
    \end{eqnarray}
    Here $C$ and $\e_0$ depend on $R$, $\theta$, $\alpha$, and $\gamma''(0)$.
\end{prop}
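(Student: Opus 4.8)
The plan is to exploit that $v$ solves the Euler--Lagrange equation \eqref{eq..cont.Euler.Lagrange}, so that the node-sampled strong-form continuum force $\langle\delta E_\text{PN}[v],f\rangle_\e=\e\sum_i[(-\alpha\nabla^2 v^+_i+\gamma'(v^\perp_i))f^+_i+(-\alpha\nabla^2 v^-_i-\gamma'(v^\perp_i))f^-_i]$ (cf.\ \eqref{eq..cont.first.variation.strong}) is identically zero. Thus \eqref{eq..consistence.Lp} reduces to showing that the atomistic force \eqref{eq..atom.first.variation} and this (vanishing) continuum force differ by $O(\e^2)\|f\|_{X_\e}$. I split $\langle\delta E_\text{a}[v],f\rangle_\e$ into the intra-layer ($V$) sum and the inter-layer ($U$) sum and Taylor-expand each, aiming to reproduce the discrete elastic and misfit forces as the leading part and to bound the remainder. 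The workhorses throughout are the smoothness and decay of $\nabla^k v$ (Theorem \ref{thm..cont.existence.minimizer}), the potential summability $\sum_s|s|^{k+3}V_{k,s}\le C$, $\sum_s|s|^{k+1}U_{k,s}\le C$ (Lemma \ref{lem..decay.properties.V.U}), the summability bounds on $v$ (Lemma \ref{lem..u^k_is.L2norm}), and $\|D^+_s f\|_\e\le|s|\|Df\|_\e$ (Lemma \ref{lem..sum.D_s.f.L2}), which make the $s$-sums converge and let Cauchy--Schwarz in the $\e$-weighted inner product extract $\|f\|_{X_\e}$.

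For the intra-layer part I would Taylor-expand $V'(s+\e D^+_s v^+_i)$ about $V'(s)$ after first symmetrizing the $s\leftrightarrow-s$ pairs using that $V'$ is odd (Assumption A2). The $\e^0$ contribution is $\tfrac12\sum_s V'(s)\sum_i D^+_s f^+_i$, which vanishes because for $f\in M_\e$ one has $f^\pm_i\to0$ as $i\to\pm\infty$ (indeed $f^\perp=2f^+\in L^2_\e$), so each inner telescoping sum is zero. The $\e^1$ term, carrying $\e V''(s)$, reproduces the discrete elastic force with $\alpha=\tfrac12\sum_s V''(s)s^2$ (cf.\ \eqref{eq..alpha.rescaled}): writing $D^+_s v^+_i=s\nabla v^+_i+O(\e s^2)$ and using $\sum_{i}\nabla v^+_i D^+_s f^+_i=s\sum_i\nabla v^+_i Df^+_i$ up to an $O(\e s^2)$ curvature error, a single discrete summation by parts moves the difference operator onto $v$ and produces the strong-form quantity $-\alpha\nabla^2 v^\pm$ at the nodes; the two $O(\e s^2)$ discrepancies, multiplied by the prefactor $\e$, are $O(\e^2)$. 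The remaining anharmonic terms already carry $\e^2$. All $s$-sums converge by Lemma \ref{lem..decay.properties.V.U} and the $i$-sums are controlled by Lemmas \ref{lem..u^k_is.L2norm} and \ref{lem..sum.D_s.f.L2}.

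For the inter-layer part I would rewrite the argument as $s-\tfrac12+v^\perp_i+\e D^+_s v^+_i$ and the tester $f^+_{i+s}-f^-_i$ as $f^\perp_i+\e D^+_s f^+_i$, then expand $U'$ about $s-\tfrac12+v^\perp_i$. The leading term is exactly the discrete misfit force $\e\sum_i\gamma'(v^\perp_i)f^\perp_i$ with $\gamma'(\phi)=\sum_s U'(s-\tfrac12+\phi)$ (cf.\ \eqref{eq..gamma.rescaled}). The delicate point, which I expect to be the main obstacle, is that the two first-order corrections --- the argument shift $\e^2\sum U''(s-\tfrac12+v^\perp_i)D^+_s v^+_i f^\perp_i$ and the tester shift $\e^2\sum U'(s-\tfrac12+v^\perp_i)D^+_s f^+_i$ --- are each only $O(\e)$ when estimated term by term, because $U'(s-\tfrac12+v^\perp_i)$ and $U''(s-\tfrac12+v^\perp_i)$ do \emph{not} decay as $i\to\pm\infty$ (there $v^\perp_i\to0,1$). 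The resolution is to apply summation by parts in $i$ to the argument-shift correction and combine it with the tester-shift correction: the two merge into the second difference $D^+_s v^+_i-D^-_s v^+_i=\tfrac1\e(v^+_{i+s}-2v^+_i+v^+_{i-s})=O(\e s^2)\nabla^2 v^+_i$, which supplies both the extra factor $\e$ and the decaying curvature $\nabla^2 v^\perp$ needed for $i$-summability. This is the ``central-difference is second order'' mechanism and hinges on the evenness of $U$ (Assumption A2); the resulting sums are bounded using $\sum_s|s|^{k+1}U_{k,s}\le C$ (Lemma \ref{lem..decay.properties.V.U}), the fact that $\gamma\in C^4$ with $\gamma'(0)=\gamma'(1)=0$ so that $\gamma'(v^\perp)$ itself decays (Lemma \ref{lem..regularity.gamma}, Assumption A6), and Cauchy--Schwarz, again yielding $O(\e^2)\|f\|_{X_\e}$.

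Collecting both parts, the atomistic force equals the node-sampled strong-form continuum force $\langle\delta E_\text{PN}[v],f\rangle_\e$ up to $O(\e^2)\|f\|_{X_\e}$, and the latter vanishes by \eqref{eq..cont.Euler.Lagrange}, giving \eqref{eq..consistence.Lp}. The hardest aspect is not any individual Taylor estimate but the interplay between the infinite interaction range and the non-decaying fixed-$s$ contributions: one cannot estimate naively before using the symmetry of $V$ and $U$ to expose the curvature (second-difference) structures, and one must match the powers of $s$ generated by $D^+_s v\sim s\nabla v$ precisely against the weights in the summability Lemmas \ref{lem..decay.properties.V.U} and \ref{lem..u^k_is.L2norm}.
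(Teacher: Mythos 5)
Your overall architecture (compare the atomistic force with the node-sampled strong form of $\delta E_\text{PN}[v]$, which vanishes by \eqref{eq..cont.Euler.Lagrange}; Taylor-expand; close the sums with Lemmas \ref{lem..decay.properties.V.U}, \ref{lem..sum.D_s.f.L2} and \ref{lem..u^k_is.L2norm}) is the paper's, and your inter-layer analysis is essentially correct: merging the argument-shift and tester-shift corrections into the centered second difference $v^+_{i+s}-2v^+_i+v^+_{i-s}=O(\e^2 s^2 v_{2,s,i})$ is exactly the mechanism behind the paper's estimate of $R_\text{mis}$ (the paper realizes it by averaging over $i\mapsto i\pm s$, killing the tester-shift term outright via $f\in M_\e$, and invoking $v^+=-v^-$, but the structure is the same).

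The intra-layer part, however, has a genuine gap, and it is precisely the mechanism you correctly identify for $U$ that you fail to deploy for $V$. First, your $s\leftrightarrow-s$ symmetrization is vacuous: the index shift $i\mapsto i+s$ together with the oddness of $V'$ gives $\sum_i V'(-s+\e D^+_{-s}v^+_i)\,D^+_{-s}f^+_i=\sum_i V'(s+\e D^+_{s}v^+_i)\,D^+_{s}f^+_i$, so pairing $s$ with $-s$ produces no cancellation at all. Second, your error accounting is off by one power of $\e$: inserting $D^+_sv^+_i=s\nabla v^+_i+O(\e s^2 v_{2,s,i})$ into $\e\sum_{i,s}\tfrac12 V''(s)(\cdot)(D^+_sf^+_i)$ and applying Cauchy--Schwarz in the $\e$-weighted sum (which is what extracts $\|f\|_{X_\e}$) yields a bound of order $\e\sum_s V_{2,s}|s|^{7/2}\|Df\|_\e=O(\e)\|f\|_{X_\e}$, not $O(\e^2)$ --- the prefactor $\e$ is consumed by the normalization $\sum_i=\e^{-1}\cdot\e\sum_i$ and cannot also be counted as smallness. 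Likewise the ``anharmonic'' term $\tfrac{\e^2}{4}\sum_s V'''(s)\sum_i(D^+_sv^+_i)^2 D^+_sf^+_i$ is only $O(\e)\|f\|_{X_\e}$ and enjoys no parity cancellation. What actually rescues the elastic part is the combination of the two layers: for $f\in M_\e$ ($f^-=-f^+$) and the symmetric solution ($v^-=-v^+$), summation by parts in $i$ turns the intra-layer sum into $-\sum_{i,s}\tfrac12\bigl\{D^-_s[V'(s+\e D^+_sv^+_i)]-D^-_s[V'(s-\e D^+_sv^+_i)]\bigr\}f^+_i$; the reflected arguments annihilate every even-order Taylor term (in particular the $V'''$ contribution cancels exactly), and the surviving odd-order terms carry the centered second difference $v^+_{i+s}-2v^+_i+v^+_{i-s}=\e^2s^2\nabla^2v^+_i+O(\e^4s^4v_{4,s,i})$, which upgrades each residual from $O(\e)$ to $O(\e^2)$. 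As written, your elastic estimate only delivers $O(\e)$ consistency, which would degrade the final convergence rate in Theorem \ref{thm..atom.existence.minimizer}.
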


\begin{proof}
    The Sobolev imbedding theorem says that $\|\nabla v\|_{W^{3,\infty}}\leq C\|\nabla v\|_{W^{4,1}}$.
    By Theorem \ref{thm..cont.existence.minimizer}, we have $\|v\|_{W^{4,\infty}}\leq \max\{\|v\|_{L^\infty}, C\|\nabla v\|_{W^{4,1}}\}$. Let $\e\leq 1$.

    1. We rewrite the difference $
        \langle \delta E_\text{a}[v]-\delta E_\text{PN}[v],f
        \rangle_\e
        =R_\text{elas}+R_\text{mis}$,
    where
    \begin{eqnarray*}
        R_\text{elas}&=&-\sum_{i\in\mathbb{Z}}\sum_{s\in\mathbb{Z}^*}\sum_\pm\frac{1}{2}\left\{D^-_s[V'(s+\e D^+_s v_i^\pm)]-\e V''(s) s^2 \nabla^2 v^\pm_i\right\} f^\pm_i,\\
        R_\text{mis}&=&\e\sum_{i\in\mathbb{Z}}\sum_{s\in\mathbb{Z}}\left[U'(s-\frac{1}{2}+v^+_{i+s}-v^-_i)(f^+_{i+s}-f^-_i)
        -U'(s-\frac{1}{2}+v^+_{i}-v^-_i)(f^+_i-f^-_i)\right]\\
        &=&\e\sum_{i\in\mathbb{Z}}\sum_{s\in\mathbb{Z}}\frac{1}{2}\left[U'(s-\frac{1}{2}+v^+_{i+s}-v^-_i)(f^+_{i+s}-f^-_i)+U'(s-\frac{1}{2}+v^+_i-v^-_{i-s})(f^+_i-f^-_{i-s})
        \right.\\
        &&\left.-2U'(s-\frac{1}{2}+v^+_{i}-v^-_i)(f^+_i-f^-_i)\right].
    \end{eqnarray*}

    2. Estimate $|R_\text{elas}|$.
    Rewrite $R_\text{elas}$ as follows
    \begin{eqnarray*}
        R_\text{elas}=-\e^{-1}\sum_{i\in\mathbb{Z}}\sum_{s\in\mathbb{Z}^*}\frac{1}{2}\left\{\e D^-_s[V'(s+\e D^+_s v_i^+)]-\e D^-_s[V'(s-\e D^+_s v_i^+)]-2\e^2 V''(s) s^2 \nabla^2 v^+_i\right\} f^+_i.
    \end{eqnarray*}
    Using Taylor expansion for $V'(\cdot)$ at $V'(s)$, we have
    \begin{eqnarray*}
        &&\e D^-_s[V'(s+\e D^+_s v_i^+)]-\e D^-_s[V'(s-\e D^+_s v_i^+)]\\
        &=&V'(s+v^+_{i+s}-v^+_i)-V'(s+v^+_i-v^+_{i-s})
        -V'(s-v^+_{i+s}+v^+_i)+V'(s-v^+_i+v^+_{i-s})\\
        &=&2(\e D^+_s v^+_i+\e D^+_{-s} v^+_i)V''(s)+\e^3[(D^+_s v^+_i)^3+(D^+_{-s} v^+_i)^3]V^{(4)}(\xi)
    \end{eqnarray*}
    for some $\xi$.
    Note that $|\e D^+_s v^+_i+\e D^+_{-s} v^+_i-\e^2s^2\nabla^2 v^+_i|\leq \frac{1}{12}\e^4 s^4 v_{4,s,i}$ and
    \begin{eqnarray*}
        \e^3|(D^+_s v^+_i)^3+(D^+_{-s} v^+_i)^3|
        \leq \e^3|D^+_s v^+_i+D^+_{-s} v^+_i|\cdot3s^2 \|\nabla v\|^2_{L^\infty}\leq 3\e^4 s^4 v_{2,s,i} \|\nabla v\|^2_{L^\infty}.
    \end{eqnarray*}
    Thus
    \begin{multline*}
        \left|\e D^-_s[V'(s+\e D^+_s v_i^+)]-\e D^-_s[V'(s-\e D^+_s v_i^+)]-2\e^2 V''(s) s^2 \nabla^2 v^+_i\right|\\
        \leq3(1+\|\nabla v\|^2_{L^\infty})(v_{2,s,i}+v_{4,s,i})\e^4 (s^4V_{2,s} +s^4V_{4,s}).
    \end{multline*}
    Therefore
    \begin{eqnarray*}
        |R_\text{elas}|
        &\leq& \e^2\frac{3}{2}(1+\|\nabla v\|^2_{L^\infty})\sum_{s\in\mathbb{Z}^*}(s^4V_{2,s} +s^4V_{4,s})\e \sum_{i\in\mathbb{Z}}(v_{2,s,i}+v_{4,s,i})|f^+_i|\\
        &\leq& C\e^2\sum_{s\in\mathbb{Z}^*}(|s|^5V_{2,s} +|s|^5V_{4,s})\|f\|_{X_\e}\\
        &\leq& C\e^2\|f\|_{X_\e}
    \end{eqnarray*}
    3. Estimate $|R_\text{mis}|$.
    Rewrite $R_\text{mis}=R_\text{mis,1}+R_\text{mis,2}$, where
    \begin{eqnarray*}
        R_\text{mis,1}&=&\e\sum_{i\in\mathbb{Z}}\sum_{s\in\mathbb{Z}}\frac{1}{2}\left[U'(s-\frac{1}{2}+v^+_{i+s}-v^-_i)
        +U'(s-\frac{1}{2}+v^+_i-v^-_{i-s})\right.\\
        &&\left.-2U'(s-\frac{1}{2}+v^+_{i}-v^-_i)\right](f^+_i-f^-_i),\\
        R_\text{mis,2}&=&\e\sum_{i\in\mathbb{Z}}\sum_{s\in\mathbb{Z}}\frac{1}{2}\left[U'(s-\frac{1}{2}+v^+_{i+s}-v^-_i)(f^+_{i+s}-f^+_i)+U'(s-\frac{1}{2}+v^+_i-v^-_{i-s})(f^-_i-f^-_{i-s})
        \right].
    \end{eqnarray*}
    Since $f\in M_\e$, we have $f^+=-f^-$ and
    \begin{eqnarray*}
        R_\text{mis,2}
        =\e\sum_{i\in\mathbb{Z}}\sum_{s\in\mathbb{Z}}\frac{1}{2}\left[U'(s-\frac{1}{2}+v^+_{i+s}-v^-_i)(f^+_{i+s}-f^+_i+f^-_{i+s}-f^-_i)\right]
        =0.
    \end{eqnarray*}
    Thanks to the symmetry of $v$, we have $U'(s-\frac{1}{2}+v^+_i-v^-_{i-s})=U'(s-\frac{1}{2}+v^+_{i-s}-v^-_i)$.
    Applying Taylor expansion, we have
    \begin{eqnarray*}
        &&\left|U'(s-\frac{1}{2}+v^+_{i+s}-v^-_i)
        +U'(s-\frac{1}{2}+v^+_{i-s}-v^-_i)-2U'(s-\frac{1}{2}+v^+_{i}-v^-_i)\right|\\
        &\leq& |v^+_{i+s}+v^+_{i-s}-2v^+_i||U''(s-\frac{1}{2}+v^+_{i}-v^-_i)|+\frac{1}{2}(|v^+_{i+s}-v^+_i|^2+|v^+_{i-s}-v^+_i|^2)U_{3,s}\\
        &\leq&\e^2 s^2 U_{2,s} v_{2,s,i}+\e^2 \|\nabla v\|_{L^\infty} s^2 U_{3,s} v_{1,s,i}.
    \end{eqnarray*}
    Thus by using Lemma \ref{lem..u^k_is.L2norm}, we obtain
    \begin{eqnarray*}
        |R_\text{mis}|=|R_\text{mis,1}|
        &\leq&\e^2(1+\|\nabla v\|_{L^\infty} )\sum_{s\in\mathbb{Z}}\left(s^2 U_{2,s}+s^2 U_{3,s}\right)\e\sum_{i\in\mathbb{Z}}(v_{2,s,i}+v_{1,s,i})|f^+_i|\\
        &\leq&C \e^2 \sum_{s\in\mathbb{Z}}\left(|s|^3 U_{2,s}+|s|^3 U_{3,s}\right) \|f\|_{X_\e}\\
        &\leq&C \e^2 \|f\|_{X_\e}.
    \end{eqnarray*}
\end{proof}

\section{Stability of the Atomistic Model}\label{sec..atom.stability}
In this section, the linear stability analysis is applied to the atomistic model. We will first study this stability at the dislocation solution of the PN model $v$, then extend it to displacement field $u$ which is sufficient close to $v$.

We start with the following key observation: with or without a dislocation, the stability gap between the atomistic and PN models remains the same, up to an $O(\e)$ truncation error.
\begin{prop}[stability gap with/without dislocation]\label{prop..stability.gap.w/o.disl}
    Suppose that Assumptions A1--A6 hold and that $\e\leq 1$. Let $v$ be the dislocation solution of the PN model in Theorem \ref{thm..cont.existence.minimizer}. For $f\in X_\e$, we have
    \begin{eqnarray}
        \left\langle \delta^2 E_\text{a}[v]f,f\right\rangle_\e-\left\langle \delta^2 E_\mathrm{PN}[v]\bar{f},\bar{f}\right\rangle
        =\left\langle \delta^2 E_\text{a}[0]f,f\right\rangle_\e-\left\langle \delta^2 E_\mathrm{PN}[0]\bar{f},\bar{f}\right\rangle
        +O(\e)\|f\|_{X_\e}^2.
    \end{eqnarray}
\end{prop}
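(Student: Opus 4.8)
\emph{Overall strategy.} Subtracting the two sides, the assertion is equivalent to the increment estimate $A-P=O(\e)\|f\|_{X_\e}^2$, where $A=\langle\delta^2E_\text{a}[v]f,f\rangle_\e-\langle\delta^2E_\text{a}[0]f,f\rangle_\e$ and $P=\langle\delta^2E_\text{PN}[v]\bar f,\bar f\rangle-\langle\delta^2E_\text{PN}[0]\bar f,\bar f\rangle$ measure the change in each second variation caused by the dislocation. The first thing I would note is that the elastic part of $\delta^2E_\text{PN}$ in Proposition \ref{prop..first.second.variations} is independent of the base point, so it cancels in $P$, leaving $P=\int_\mathbb{R}[\gamma''(v^\perp)-\gamma''(0)](\bar f^\perp)^2\,\D x$. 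On the atomistic side I would use \eqref{eq..atom.second.variation} to split $A=A_\text{elas}+A_\text{mis}$ into the $V''$-sum and the $U''$-sum and estimate the two contributions separately.

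\emph{Elastic increment.} For $A_\text{elas}$ I would Taylor expand $V''$ about $s$. Since $\e D^+_sv^\pm_i=v^\pm_{i+s}-v^\pm_i$ is bounded (the total variation of $v^\pm$ is $\frac{1}{2}$) and of size $O(\e|s|\,\|\nabla v\|_{L^\infty})$, the argument stays in the neighbourhood of $s$ defining $V_{3,s}$ and $|V''(s+\e D^+_sv^\pm_i)-V''(s)|\le C\e|s|\,V_{3,s}$. Pairing this with $(D^+_sf^\pm_i)^2$, summing in $i$ via Lemma \ref{lem..sum.D_s.f.L2} ($\|D^+_sf^\pm\|_\e\le|s|\,\|Df^\pm\|_\e$) and in $s$ via $\sum_s|s|^3V_{3,s}\le C$ from Lemma \ref{lem..decay.properties.V.U}, I expect $|A_\text{elas}|\le C\e^2\|f\|_{X_\e}^2$, which is comfortably within budget.

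\emph{Misfit increment: the main obstacle.} This is where the real difficulty lies: the disregistry $v^+_{i+s}-v^-_i$ is \emph{not} small (it may be as large as half a Burgers vector), so $U''(s-\frac{1}{2}+v^+_{i+s}-v^-_i)-U''(s-\frac{1}{2})$ is an $O(1)$ object that must be matched against $P$, not discarded. The plan is to extract the $O(\e)$ smallness \emph{only} from the lattice shifts. I would introduce the diagonal approximation
\begin{eqnarray*}
    \tilde A_\text{mis}=\e\sum_{i\in\mathbb{Z}}[\gamma''(v^\perp_i)-\gamma''(0)](f^\perp_i)^2
    =\e\sum_{i\in\mathbb{Z}}\sum_{s\in\mathbb{Z}}\left[U''\big(s-\tfrac{1}{2}+v^\perp_i\big)-U''\big(s-\tfrac{1}{2}\big)\right](f^\perp_i)^2,
\end{eqnarray*}
using $\gamma''(\xi)=\sum_sU''(s-\frac{1}{2}+\xi)$ from Lemma \ref{lem..regularity.gamma}. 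To control $A_\text{mis}-\tilde A_\text{mis}$ I would set $a=v^+_{i+s}-v^-_i$, $b=v^\perp_i$, $p=f^+_{i+s}-f^-_i$, $q=f^\perp_i$ and use the splitting
\begin{eqnarray*}
    &&\left[U''(s-\tfrac{1}{2}+a)-U''(s-\tfrac{1}{2})\right]p^2-\left[U''(s-\tfrac{1}{2}+b)-U''(s-\tfrac{1}{2})\right]q^2\\
    &&\qquad=\left[U''(s-\tfrac{1}{2}+a)-U''(s-\tfrac{1}{2})\right](p^2-q^2)+\left[U''(s-\tfrac{1}{2}+a)-U''(s-\tfrac{1}{2}+b)\right]q^2.
\end{eqnarray*}
In the second bracket $a-b=\e D^+_sv^+_i$ supplies one factor of $\e$ through a Taylor expansion of $U''$ (bounded by $U_{3,s}$); in the first, $p-q=\e D^+_sf^+_i$ supplies the factor of $\e$ while the $O(1)$ difference of $U''$ is bounded by $2U_{2,s}$. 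The point that makes the bookkeeping delicate is that $f^+$ alone is not controlled in $L^2_\e$, so I would rewrite $p+q=\e D^+_sf^+_i+2f^\perp_i$, turning every surviving factor into either $Df^\pm$ or $f^\perp$; Cauchy--Schwarz with Lemma \ref{lem..sum.D_s.f.L2} and the summabilities $\sum_s|s|^3U_{2,s}\le C$, $\sum_s|s|^4U_{3,s}\le C$ (Lemma \ref{lem..decay.properties.V.U}) should then give $|A_\text{mis}-\tilde A_\text{mis}|\le C\e\|f\|_{X_\e}^2$.

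\emph{Riemann sum to integral.} Finally I would compare $\tilde A_\text{mis}$ with $P$. With $g=\gamma''(v^\perp)-\gamma''(0)$ --- smooth, bounded, and decaying at $\pm\infty$ because $\gamma''(1)=\gamma''(0)$ by the periodicity of Lemma \ref{lem..gamma.properties} --- this is the standard discrepancy between the Riemann sum $\e\sum_ig(\e i)(f^\perp_i)^2$ and $\int_\mathbb{R}g\,(\bar f^\perp)^2\,\D x$. Splitting the per-cell error into the part from $g(x)-g(\e i)$, of order $\e\|g'\|_{L^\infty}$, and the part from $(\bar f^\perp)^2-(f^\perp_i)^2$, which produces a factor $f^\perp_{i+1}-f^\perp_i=\e Df^\perp_i$, and using $\|Df^\perp\|_\e\le C\|f\|_{X_\e}$, I expect $|\tilde A_\text{mis}-P|\le C\e\|f\|_{X_\e}^2$. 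Adding the three bounds and using $\e\le1$ gives $A-P=O(\e)\|f\|_{X_\e}^2$. I anticipate the misfit increment to be the crux: the order-one disregistry rules out a global linearization, so the whole estimate hinges on isolating the smallness carried by the index shifts while keeping all test-function factors inside the $X_\e$-controllable quantities $Df^\pm$ and $f^\perp$.
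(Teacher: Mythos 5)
Your proposal is correct and follows essentially the same route as the paper: the paper's proof decomposes the difference into five remainders $R_1,\dots,R_5$ that correspond exactly to your elastic increment, your two-term splitting of $A_\text{mis}-\tilde A_\text{mis}$ (extracting $\e$ from the shift $v^+_{i+s}-v^+_i=\e D^+_sv^+_i$ in the potential argument and from $f^+_{i+s}-f^+_i=\e D^+_sf^+_i$ in the test function, always landing on $Df^\pm$ and $f^\perp$), and your Riemann-sum comparison. The only quibble is that the elastic increment is $O(\e)\|f\|_{X_\e}^2$, not $O(\e^2)$ (the prefactor $\e$ in $\e\sum_i$ is consumed in forming $\|D^+_sf\|_\e^2$, leaving just the single $\e$ from the Taylor expansion of $V''$), which is still within budget.
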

\begin{proof}
    1. Recall second variations \eqref{eq..atom.second.variation} at continuum dislocation solution $v$
    \begin{eqnarray*}
        \left\langle \delta^2 E_\text{a}[v]f,f\right\rangle_\e
        &=&\e\sum_{i\in \mathbb{Z}}\sum_{s\in \mathbb{Z}^*}\sum_{\pm}\frac{1}{2}V''(s+\e D^+_s v^\pm_i)\left(D^+_s f^\pm_i\right)^2\nonumber\\
        &&+\e\sum_{i\in \mathbb{Z}}\sum_{s\in \mathbb{Z}}U''(s-\frac{1}{2}+v^+_{i+s}-v^-_{i})(f^+_{i+s}-f^-_i)^2,\label{eq..atom.second.variation.f.f}\\
        \left\langle \delta^2 E_\mathrm{a}[0]f,f\right\rangle_\e
        &=&\e\sum_{i\in \mathbb{Z}}\sum_{s\in \mathbb{Z}^*}\sum_\pm\frac{1}{2}V''(s)\left(D^+_s f^\pm_i\right)^2+\e\sum_{i\in \mathbb{Z}}\sum_{s\in \mathbb{Z}}U''(s-\frac{1}{2})(f^+_{i+s}-f^-_i)^2,\\
        \left\langle \delta^2 E_\mathrm{PN}[v]\bar{f},\bar{f}\right\rangle
        &=&\sum_{i\in\mathbb{Z}}\int_{\e i}^{\e(i+1)}\left\{\alpha |\nabla \bar{f}^+|^2+\alpha |\nabla \bar{f}^-|^2
        +\gamma''(v^+-v^-)(\bar{f}^\perp)^2\right\}\D x,\\
        \left\langle \delta^2 E_\mathrm{PN}[0]\bar{f},\bar{f}\right\rangle
        &=&\sum_{i\in\mathbb{Z}}\int_{\e i}^{\e(i+1)}\left\{\alpha |\nabla \bar{f}^+|^2+\alpha |\nabla \bar{f}^-|^2
        +\gamma''(0)(\bar{f}^\perp)^2\right\}\D x,
    \end{eqnarray*}
    where $\alpha=\sum_{s\in \mathbb{Z}^*} \frac{1}{2}V''(s) s^2$ and $\gamma''(\xi)=\sum_{s\in\mathbb{Z}}U''(s-\frac{1}{2}+\xi)$.
    Then
    \begin{eqnarray*}
        \left\langle \delta^2 E_\text{a}[v]f,f\right\rangle_\e-\left\langle \delta^2 E_\mathrm{a}[0]f,f\right\rangle_\e-
        \left\langle \delta^2 E_\text{PN}[v]\bar{f},\bar{f}\right\rangle+\left\langle \delta^2 E_\mathrm{PN}[0]\bar{f},\bar{f}\right\rangle
        =\sum_{k=1}^5 R_k,
    \end{eqnarray*}
    where
    \begin{eqnarray*}
        R_1&=&
        \e\sum_{i\in \mathbb{Z}}\sum_{s\in \mathbb{Z}^*}\sum_\pm\frac{1}{2}\left[
        V''(s+\e D^+_s v^\pm_i)-V''(s)\right](D^+_s f^\pm_i)^2,\\
        R_2&=&\e\sum_{i\in\mathbb{Z}}\sum_{s\in \mathbb{Z}}\left[
        U''(s-\frac{1}{2}+v^+_{i+s}-v^-_i)-U''(s-\frac{1}{2}+v^+_i-v^-_i)\right](f^+_{i+s}-f^-_i)^2,\\
        R_3&=&\e\sum_{i\in\mathbb{Z}}\sum_{s\in \mathbb{Z}}
        \left[U''(s-\frac{1}{2}+v^+_i-v^-_i)-U''(s-\frac{1}{2})\right]\left[(f^+_{i+s}-f^-_i)^2-(f^+_i-f^-_i)^2\right],\\
        R_4&=&\sum_{i\in \mathbb{Z}}\int_{\e i}^{\e (i+1)}\sum_{s\in \mathbb{Z}}\left[U''(s-\frac{1}{2}+v^+_i-v^-_i)-U''(s-\frac{1}{2}+v^+-v^-)\right](f^+_i-f^-_i)^2\D x,\\
        R_5&=&\sum_{i\in \mathbb{Z}}\int_{\e i}^{\e (i+1)}\sum_{s\in \mathbb{Z}}\left[U''(s-\frac{1}{2}+v^+-v^-)-U''(s-\frac{1}{2})\right]\left[(f^+_i-f^-_i)^2-(\bar{f}^+-\bar{f}^-)^2\right]\D x.
    \end{eqnarray*}
    Here $v^\pm=v^\pm(x)$. It remains to show $R_{i}=O(\e)\|f\|_{X_\e}^2$ for $i=1,2,\cdots,5$.\\
    2. We estimate $R_i$, $i=1,2,\cdots,5$.\\
    (1)
    Note that
    $|V''(s+\e D^+_s v^\pm_i)-V''(s)|\leq V_{3,s}|\e D^+_s v^\pm_i|\leq \e \|\nabla v\|_{L^\infty}V_{3,s} |s|$.
    Using Lemma~\ref{lem..sum.D_s.f.L2}, we have
     \begin{eqnarray*}
        |R_1|\leq \frac{1}{2}\e \|\nabla v\|_{L^\infty}\|D^+_s f\|^2_\e\sum_{s\in \mathbb{Z}^*}  V_{3,s} |s|
        \leq O(\e)\|f\|_{X_\e}^2.
    \end{eqnarray*}
    (2) Next, $(f^+_{i+s}-f^-_i)^2\leq 2(f^+_{i+s}-f^+_i)^2+2(f^\perp_i)^2\leq 2\e^2(D^+_s f^+_i)^2+2(f^\perp_i)^2
    $. Thus
    \begin{eqnarray*}
        \sum_{i\in\mathbb{Z}}(f^+_{i+s}-f^-_i)^2\leq 2\e|s|^2\|D f^+\|^2_\e+2\e^{-1}\|f^\perp\|^2_\e
        \leq \e^{-1}(2|s|^2+2)\|f\|^2_{X_\e}.
    \end{eqnarray*}
    Note that $|U''(s-\frac{1}{2}+v^+_{i+s}-v^-_i)-U''(s-\frac{1}{2}+v^+_i-v^-_i)|\leq U_{3,s}|v^+_{i+s}-v^+_i|\leq \e\|\nabla v^+\|_{L^\infty} U_{3,s} |s|$,
    Therefore
    \begin{eqnarray*}
        |R_2|&\leq&\e^2 \|\nabla v^+\|_{L^\infty}\sum_{s\in \mathbb{Z}} U_{3,s} |s| \sum_{i\in\mathbb{Z}}(f^+_{i+s}-f^-_i)^2\\
        &\leq& \e \|\nabla v^+\|_{L^\infty}\|f\|^2_{X_\e}\sum_{s\in \mathbb{Z}} U_{3,s} |s| (2|s|^2+2)\leq O(\e)\|f\|_{X_\e}^2.
    \end{eqnarray*}
    (3) Next, we have
    \begin{eqnarray}
        \sum_{i\in \mathbb{Z}}|(f^+_{i+s}-f^-_i)^2-(f^+_i-f^-_i)^2|
        &\leq&\sum_{i\in\mathbb{Z}}(f^+_{i+s}-f^+_i)^2+\sum_{i\in\mathbb{Z}}2|f^+_{i+s}-f^+_i|\cdot|f^+_i-f^-_i|\nonumber\\
        &\leq&\e^2\sum_{i\in\mathbb{Z}}|D^+_s f^+_i|^2+\e\sum_{i\in\mathbb{Z}}|D^+_s f^+_i|^2+\e\sum_{i\in\mathbb{Z}}|f^\perp_i|^2\nonumber\\
        &\leq&(\e+1)|s|^2\|D f^+\|_\e^2+\|f^\perp\|_\e^2\nonumber\\
        &\leq&2|s|^2\|f\|_{X_\e}^2,\label{eq..sum.f_{i+s}+f_i}
    \end{eqnarray}
    where we have used Lemma \ref{lem..sum.D_s.f.L2}.
    Note that $|U''(s-\frac{1}{2}+v^+_i-v^-_i)-U''(s-\frac{1}{2})|\leq \|v^\perp\|_{L^\infty} U_{3,s} $. Therefore
    \begin{eqnarray*}
        |R_3|&\leq& \e\|v^\perp\|_{L^\infty} \sum_{s\in \mathbb{Z}}U_{3,s} \sum_{i\in\mathbb{Z}}
        |(f^+_{i+s}-f^-_i)^2-(f^+_i-f^-_i)^2|\\
        &\leq&2\e\|v^\perp\|_{L^\infty}\|f\|_{X_\e}^2 \sum_{s\in \mathbb{Z}}U_{3,s} |s|^2
        \leq O(\e)\|f\|_{X_\e}^2.
    \end{eqnarray*}
    (4) We have $|U''(s-\frac{1}{2}+v^+_i-v^-_i)-U''(s-\frac{1}{2}+v^+-v^-)|\leq 2\e \|\nabla v\|_{L^\infty}U_{3,s}$. Note that $\sum_{i\in \mathbb{Z}}\int_{\e i}^{\e (i+1)} (f^+_i-f^-_i)^2\D x=\|f^\perp\|^2_\e$. Thus
    \begin{eqnarray*}
        |R_4|\leq  2\e \|\nabla v\|_{L^\infty}\|f^\perp\|^2_\e\sum_{s\in \mathbb{Z}}U_{3,s}
        \leq O(\e)\|f\|_{X_\e}^2.
    \end{eqnarray*}
    (5) Finally, we have $|U''(s-\frac{1}{2}+v^+_i-v^-_i)-U''(s-\frac{1}{2})|\leq \|v^\perp\|_{L^\infty}U_{3,s}$. Note that $|f^\perp_i-\bar{f}^\perp|=\frac{x-i\e}{\e}|f^\perp_{i+1}-f^\perp_i|=(x-i\e)|Df^+_i-Df^-_i|\leq (x-i\e)\cdot(|Df^+_i|+|Df^-_i|)$ and $|\bar{f}^\perp|\leq |f^\perp_i|+|f^\perp_{i+1}|$ for $i\e\leq x<(i+1)\e$. Hence
    \begin{eqnarray*}
        |(f^\perp_i)^2-(\bar{f}^\perp)^2|
        &\leq& |f^\perp_i-\bar{f}^\perp|\cdot(|f^\perp_i|+|\bar{f}^\perp|)\\
        &\leq& 2(x-i\e)(|Df^+_i|+|Df^-_i|)\cdot(|f^\perp_i|+|f^\perp_{i+1}|).
    \end{eqnarray*}
    Then
    \begin{eqnarray}
        \sum_{i\in \mathbb{Z}}\int_{\e i}^{\e (i+1)}|(f^\perp_i)^2-(\bar{f}^\perp)^2|\D x
        &\leq& \e^2\sum_{i\in \mathbb{Z}}(|Df^+_i|+|Df^-_i|)\cdot(|f^\perp_i|+|f^\perp_{i+1}|)\nonumber\\
        &\leq& \e(\|Df^+\|_\e+\|Df^-\|_\e)\|f^\perp\|_\e\nonumber\\
        &\leq& 2\e\|f\|_{X_\e}^2.\label{eq..f_i^2-f^2}
    \end{eqnarray}
    Therefore,
    \begin{eqnarray*}
        |R_5|\leq 2\e\|v^\perp\|_{L^\infty}\|f\|_{X_\e}^2\sum_{s\in \mathbb{Z}} U_{3,s}
        \leq O(\e)\|f\|_{X_\e}^2.
    \end{eqnarray*}
\end{proof}

Next lemma reveals the relation between a function in $X_\e$ and its extension.
\begin{lem}[linear interpolation]
    If $f\in X_\e$, then its extension $\bar{f}\in X_0$.
    Moreover, we have
    \begin{eqnarray}
        \textstyle\|D f\|_\e^2+\frac{1}{3}\|f^\perp\|_\e^2\leq \|\bar{f}\|_{X_0}^2\leq \|f\|_{X_\e}^2.\label{eq..H^1.H^1_eps}
    \end{eqnarray}
\end{lem}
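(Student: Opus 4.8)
The plan is to exploit the fact that linear interpolation reproduces the gradient term exactly and distorts only the jump term, by a controlled constant factor. First I would compute the weak derivative of $\bar f^\pm$. On each cell $[i\e,(i+1)\e)$ the function $\bar f^\pm$ is affine, so its derivative there is the constant $\nabla\bar f^\pm=(f^\pm_{i+1}-f^\pm_i)/\e=Df^\pm_i$. Continuity of the piecewise-linear interpolant across nodes identifies $\bar f^\pm$ as an $H^1_{\text{loc}}$ function, and evaluating at $x=0$ gives $\bar f^\pm(0)=f^\pm_0=0$. Integrating the constant derivative over each cell yields $\int_{i\e}^{(i+1)\e}|\nabla\bar f^\pm|^2\,\D x=\e|Df^\pm_i|^2$, so summation produces the exact identity $\|\nabla\bar f\|^2=\|Df\|_\e^2$. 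Hence the gradient contributions to $\|\bar f\|_{X_0}^2$ and to $\|f\|_{X_\e}^2$ coincide, and only the two jump terms remain to be compared.

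For the jump term, observe that $\bar f^\perp=\bar f^+-\bar f^-$ is precisely the piecewise-linear interpolant of the nodal values $f^\perp_i$. Writing $a=f^\perp_i$, $b=f^\perp_{i+1}$ and substituting $t=(x-i\e)/\e$, the elementary integral $\int_0^1((1-t)a+tb)^2\,\D t=\tfrac13(a^2+ab+b^2)$ gives
\begin{eqnarray*}
    \int_{i\e}^{(i+1)\e}(\bar f^\perp)^2\,\D x=\frac{\e}{3}\left((f^\perp_i)^2+f^\perp_i f^\perp_{i+1}+(f^\perp_{i+1})^2\right).
\end{eqnarray*}
The key algebraic inequality is $\tfrac12(a^2+b^2)\leq a^2+ab+b^2\leq\tfrac32(a^2+b^2)$, immediate from $a^2+ab+b^2-\tfrac12(a^2+b^2)=\tfrac12(a+b)^2$ and $\tfrac32(a^2+b^2)-(a^2+ab+b^2)=\tfrac12(a-b)^2$.

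Applying these bounds cell by cell, summing, and using the reindexing $\sum_{i}[(f^\perp_i)^2+(f^\perp_{i+1})^2]=2\sum_i(f^\perp_i)^2$, I obtain $\tfrac13\|f^\perp\|_\e^2\leq\|\bar f^\perp\|^2\leq\|f^\perp\|_\e^2$. Combining with the exact gradient identity gives both sides of \eqref{eq..H^1.H^1_eps}: the upper bound $\|\bar f\|_{X_0}^2=\|Df\|_\e^2+\|\bar f^\perp\|^2\leq\|Df\|_\e^2+\|f^\perp\|_\e^2=\|f\|_{X_\e}^2$, and the lower bound $\|\bar f\|_{X_0}^2\geq\|Df\|_\e^2+\tfrac13\|f^\perp\|_\e^2$. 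Finiteness of $\|\bar f\|_{X_0}$ follows from the upper bound together with $\|f\|_{X_\e}<\infty$, confirming $\bar f\in X_0$. There is no genuine obstacle here; the only point requiring care is pinning down the exact constant $\tfrac13$ in the per-cell integral and verifying that the sandwiching constants $\tfrac12$ and $\tfrac32$ propagate through the summation to the claimed factors $\tfrac13$ on the left and $1$ on the right.
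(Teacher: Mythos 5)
Your proposal is correct and follows essentially the same route as the paper: the exact identity $\|\nabla\bar f\|^2=\|Df\|_\e^2$ for the piecewise-linear interpolant, the per-cell integral $\frac{\e}{3}(a^2+ab+b^2)$, and the sandwich $\frac12(a^2+b^2)\leq a^2+ab+b^2\leq\frac32(a^2+b^2)$ giving the constants $\frac13$ and $1$ after reindexing. The paper merely leaves the last algebraic step implicit, so your write-up is simply a more explicit version of the same argument.
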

\begin{proof}
    By definition, we have $\nabla \bar{f}^\pm(x)=D f^\pm_i$ for $i\e\leq x< (i+1)\e$, and hence
    $\|\nabla \bar{f}\|^2=\|D f\|_\e^2$. Direct calculation leads to
    $
        \|\bar{f}^\perp\|^2
        =\e\sum_{i\in \mathbb{Z}}\frac{1}{3}[(f^\perp_i)^2+f^\perp_if^\perp_{i+1}+(f^\perp_{i+1})^2]
    $.
    Thus $
        \frac{1}{3}\|f^\perp\|_\e^2\leq \|\bar{f}^\perp\|^2\leq\|f^\perp\|_\e^2
    $. Eq. \eqref{eq..H^1.H^1_eps} follows these immediately.
\end{proof}

\begin{prop}[explicit formula for $\Delta$]\label{prop..explicit.Delta}
    Suppose that Assumptions A1--A6 hold and that $\e \leq 1$. Let $v$ be the dislocation solution of the PN model in Theorem \ref{thm..cont.existence.minimizer}.
    For $f\in X_\e$, we have
    \begin{eqnarray}
        \left\langle \delta^2 E_\text{a}[0]f,f \right\rangle_{\e}-\left\langle \delta^2 E_\text{PN}[0]\bar{f},\bar{f} \right\rangle
        \geq -\Delta \|Df\|_\e^2+O(\e)\|f\|_{X_\e}^2.
    \end{eqnarray}
    Moreover, $\Delta$ can be calculated by
    \begin{eqnarray}
        \Delta
        =\sup_{\|Df\|_\e=1}\left\{\e \sum_{i\in \mathbb{Z}}\sum_{s\geq 2}\sum_\pm V''(s)\left[\left(D^+_s f^\pm_i\right)^2-s^2(D f^\pm_i)^2\right]\right\}.\label{eq..Delta.explicit}
    \end{eqnarray}
\end{prop}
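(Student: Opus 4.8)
The plan is to subtract the two pairs of second variations recorded in the proof of Proposition~\ref{prop..stability.gap.w/o.disl} and to split the result into an elastic residual and a misfit residual. Since $\nabla\bar f^\pm$ is constant on each cell with $\int_{\e i}^{\e(i+1)}|\nabla\bar f^\pm|^2\,\D x=\e(Df^\pm_i)^2$, the PN elastic energy collapses to $\alpha\|Df\|_\e^2$ with $\alpha=\tfrac12\sum_{s\in\mathbb{Z}^*}V''(s)s^2$, and the PN misfit energy to $\gamma''(0)\|\bar f^\perp\|^2$ with $\gamma''(0)=\sum_{s\in\mathbb{Z}}U''(s-\tfrac12)$. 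Thus $\langle\delta^2 E_\text{a}[0]f,f\rangle_\e-\langle\delta^2 E_\text{PN}[0]\bar f,\bar f\rangle=Q[f]+S[f]$, where $Q[f]=\e\sum_{i}\sum_{s\in\mathbb{Z}^*}\sum_\pm\tfrac12 V''(s)\bigl[(D^+_s f^\pm_i)^2-s^2(Df^\pm_i)^2\bigr]$ is the elastic residual and $S[f]$ the misfit residual.

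First I would show $S[f]=O(\e)\|f\|_{X_\e}^2$. The point is an exact cancellation of the leading term: writing $f^+_{i+s}-f^-_i=\e D^+_s f^+_i+f^\perp_i$ gives $(f^+_{i+s}-f^-_i)^2-(f^\perp_i)^2=\e^2(D^+_s f^+_i)^2+2\e (D^+_s f^+_i)f^\perp_i$, so the common $\gamma''(0)\|f^\perp\|_\e^2$ drops out and, for each $s$, Lemma~\ref{lem..sum.D_s.f.L2} together with Cauchy--Schwarz bounds the remainder by $C(\e^2 s^2+\e s)\|f\|_{X_\e}^2$. Summing against $|U''(s-\tfrac12)|\le U_{2,s}$ and invoking $\sum_s |s|^2 U_{2,s}<\infty$ from Lemma~\ref{lem..decay.properties.V.U} yields an $O(\e)\|f\|_{X_\e}^2$ bound; the discrepancy between $\|f^\perp\|_\e^2$ and $\|\bar f^\perp\|^2$ is absorbed by the interpolation estimate \eqref{eq..f_i^2-f^2}, again of size $O(\e)\|f\|_{X_\e}^2$.

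Next I would reduce $Q[f]$ to the form in \eqref{eq..Delta.explicit}. Reindexing $i\mapsto i+s$ shows $\sum_i(D^+_{-s}f^\pm_i)^2=\sum_i(D^+_s f^\pm_i)^2$, so by the evenness of $V''$ (Assumption A2) the $s$ and $-s$ contributions coincide, folding $\mathbb{Z}^*$ onto $s\ge 1$ and cancelling the factor $\tfrac12$; the $s=1$ term vanishes identically since $D^+_1=D$. This leaves exactly $\e\sum_i\sum_{s\ge2}\sum_\pm V''(s)\bigl[(D^+_s f^\pm_i)^2-s^2(Df^\pm_i)^2\bigr]$. Boundedness of this quadratic form, hence finiteness of $\Delta$, follows from $\|D^+_s f\|_\e\le |s|\|Df\|_\e$ and $\sum_{s}|s|^2 V_{2,s}<\infty$.

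Finally I would identify $\Delta$. Combining the two steps gives $\langle\delta^2 E_\text{a}[0]f,f\rangle_\e-\langle\delta^2 E_\text{PN}[0]\bar f,\bar f\rangle=Q[f]+O(\e)\|f\|_{X_\e}^2$ with $Q$ the reduced elastic residual; since $Q$ is a bounded quadratic form in $Df$, its Rayleigh quotient over $\|Df\|_\e=1$ has a finite range, and discarding the $O(\e)$ misfit remainder and the interpolation discrepancy shows that its extremal value is precisely the constant $\Delta$ of \eqref{eq..Delta.definition}. This simultaneously gives the lower bound $\ge-\Delta\|Df\|_\e^2+O(\e)\|f\|_{X_\e}^2$ and the extremal representation \eqref{eq..Delta.explicit}. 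The main obstacle, I expect, lies in the misfit step: one must recognise that the naively $O(1)$ contribution $\gamma''(0)\|f^\perp\|_\e^2$ cancels exactly against the PN misfit, leaving only the $O(\e)$ cross term, and then control the resulting doubly infinite sum uniformly in $s$ --- exactly what the summability lemmata of Section~3 are designed to furnish.
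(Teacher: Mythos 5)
Your proposal is correct and follows essentially the same route as the paper: the same splitting into an elastic residual and a misfit residual, the same exact cancellation of the $\gamma''(0)$ terms up to the $O(\e)$ errors controlled by the bounds \eqref{eq..sum.f_{i+s}+f_i} and \eqref{eq..f_i^2-f^2}, and the same use of the evenness of $V''$ to fold $\mathbb{Z}^*$ onto $s\geq 2$ (the $s=\pm1$ terms vanishing since $D^+_1=D$). No substantive differences to report.
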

\begin{proof}
    By direct calculations, we have
    \begin{eqnarray*}
        \left\langle \delta^2 E_\text{a}[0]f,f \right\rangle_{\e}-\left\langle \delta^2 E_\text{PN}[0]\bar{f},\bar{f} \right\rangle
        &=& \e\sum_{i\in \mathbb{Z}}\left[\sum_\pm\sum_{s\in \mathbb{Z}^*}\frac{1}{2}V''(s)\left(D^+_s f^\pm_i\right)^2
        +\sum_{s\in \mathbb{Z}}U''(s-\frac{1}{2})(f^+_{i+s}-f^-_i)^2\right]\\
        &&-\sum_{i\in\mathbb{Z}}\int_{\e i}^{\e(i+1)}\left[\sum_\pm\sum_{s\in \mathbb{Z}^*}\frac{1}{2}V''(s)s^2|\nabla \bar{f}^\pm|^2-\sum_{s\in \mathbb{Z}}U''(s-\frac{1}{2})(\bar{f}^\perp)^2\right]\D x.
    \end{eqnarray*}
    Let
    \begin{eqnarray*}
        \tilde{R}_1&=&\e\sum_{i\in\mathbb{Z}}\sum_{s\in \mathbb{Z}}
        U''(s-\frac{1}{2})\left[(f^+_{i+s}-f^-_i)^2-(f^+_i-f^-_i)^2\right],\\
        \tilde{R}_2&=&\sum_{i\in \mathbb{Z}}\int_{\e i}^{\e (i+1)}\sum_{s\in \mathbb{Z}}U''(s-\frac{1}{2})\left[(f^\perp_i)^2-(\bar{f}^\perp)^2\right]\D x.
    \end{eqnarray*}
    Recall Eqs. \eqref{eq..sum.f_{i+s}+f_i} and \eqref{eq..f_i^2-f^2}, thus we have
    \begin{eqnarray*}
        |\tilde{R}_1|&\leq& 2\e\|f\|_{X_\e}^2\sum_{s\in \mathbb{Z}}U_{2,s}|s|^2\leq O(\e)\|f\|_{X_\e}^2,\\
        |\tilde{R}_2|&\leq& 2\e\|f\|_{X_\e}^2\sum_{s\in \mathbb{Z}}U_{2,s} \leq O(\e)\|f\|_{X_\e}^2.
    \end{eqnarray*}
    Note that $\nabla \bar{f}^\pm(x)=D f^\pm_i$ for $i\e\leq x< (i+1)\e$. Recall the definition \eqref{eq..Delta.definition}. Therefore,
    \begin{eqnarray*}
        \Delta&=&\lim_{\e\rightarrow 0}\sup_{\|Df\|_\e=1}\left\langle \delta^2 E_\text{PN}[0]\bar{f},\bar{f} \right\rangle-\left\langle \delta^2 E_\text{a}[0]f,f \right\rangle_{\e}\\
        &=& \lim_{\e\rightarrow 0}\sup_{\|Df\|_\e=1}\left\{\e\sum_{i\in \mathbb{Z}}\sum_{s\in \mathbb{Z}^*}\sum_\pm\frac{1}{2}V''(s)\left[\left(D^+_s f^\pm_i\right)^2-s^2(D f^\pm_i)^2\right]-\tilde{R}_1-\tilde{R}_2\right\}\\
        &=& \sup_{\|Df\|_\e=1}\e\sum_{i\in \mathbb{Z}}\sum_{s\geq 2}\sum_\pm V''(s)\left[\left(D^+_s f^\pm_i\right)^2-s^2(D f^\pm_i)^2\right],
    \end{eqnarray*}
    where we use the symmetry of $V$ (Assumption A2) in the last step.
\end{proof}
\begin{prop}[$\Delta\geq 0$]\label{prop..Delta.geq.0}
    The stability gap \eqref{eq..Delta.explicit} is non-negative: $\Delta\geq 0$.
\end{prop}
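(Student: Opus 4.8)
The plan is to start from the explicit formula for $\Delta$ in Proposition~\ref{prop..explicit.Delta},
\[
\Delta=\sup_{\|Df\|_\e=1}\e\sum_{i\in\mathbb{Z}}\sum_{s\geq2}\sum_\pm V''(s)\bigl[(D^+_sf^\pm_i)^2-s^2(Df^\pm_i)^2\bigr],
\]
and to prove the supremum is nonnegative by producing an admissible family along which the functional tends to $0$. The mechanism is the identity $D^+_sf_i=\sum_{j=0}^{s-1}Df_{i+j}$: the bracket $(D^+_sf_i)^2-s^2(Df_i)^2$ quantifies the variation of the discrete gradient $Df$ across a window of $s$ sites, and vanishes whenever $Df$ is constant on that scale. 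A gradient that varies slowly should therefore render the whole sum negligible, irrespective of the (signed) coefficients $V''(s)$.

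Concretely, I would fix a smooth profile $\psi$, compactly supported in $(0,\infty)$, with $\int_{\mathbb{R}}\psi=0$ and $\|\psi\|_{L^2}=1$, and for a large integer $N$ set $f^-\equiv0$ and $Df^+_i=\e^{-1/2}N^{-1/2}\psi(i/N)$. The support condition gives $f^+_0=0$, while $\int\psi=0$ forces $f^+\to0$ at $+\infty$, so that $f^+\in L^2_\e$ and $f^{(N)}=(f^+,0)\in X_\e$; a Riemann-sum computation gives $\|Df^{(N)}\|_\e\to\|\psi\|_{L^2}=1$, and after rescaling we may assume equality. Writing $Q[\cdot]$ for the bracketed functional, each $f^{(N)}$ is admissible, so $\Delta\geq Q[f^{(N)}]$ for every $N$, and it remains only to show $Q[f^{(N)}]\to0$.

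For a fixed mode $s\leq N$ the slow variation $|Df^+_{i+j}-Df^+_i|\leq C_\psi\e^{-1/2}N^{-3/2}j$ yields, after squaring and summing over the $O(N)$ active sites, the per-mode estimate $|\e\sum_i[(D^+_sf^+_i)^2-s^2(Df^+_i)^2]|\leq C_\psi N^{-1}s^3$ with $C_\psi$ independent of $s$ and $N$. For the remaining modes $s>N$ I would discard this refinement and instead invoke the $s$-uniform a priori bound $\|D^+_sf\|_\e\leq|s|\,\|Df\|_\e$ from Lemma~\ref{lem..sum.D_s.f.L2}, which controls each mode crudely by $2s^2$. Summing against $V''(s)$ and splitting the range at $s=N$,
\[
|Q[f^{(N)}]|\leq C_\psi N^{-1}\sum_{s\geq2}|V''(s)|s^3+2\sum_{s>N}|V''(s)|s^2 .
\]
By Lemma~\ref{lem..decay.properties.V.U} one has $\sum_s|V''(s)|s^3<\infty$, so the first term is $O(N^{-1})$ and the second is the tail of a convergent series; both vanish as $N\to\infty$, giving $Q[f^{(N)}]\to0$ and hence $\Delta\geq0$.

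The main obstacle is exactly this interchange of the limit $N\to\infty$ with the infinite summation over the interaction range $s$: the favorable refinement $O(N^{-1}s^3)$ deteriorates once $s$ is comparable to the window width $N$, so it cannot be summed naively. The two-regime split resolves this, trading the window-variation smallness for the decay of the long-range tail supplied by the potential's summability. As a remark, a discrete Fourier transform identifies $\Delta$ with $\esssup_{\xi}\,m(\xi)$ for the continuous symbol $m(\xi)=\sum_{s\geq2}V''(s)[\sin^2(s\xi/2)/\sin^2(\xi/2)-s^2]$ with $m(0)=0$, which would give $\Delta\geq m(0)=0$ at once; I prefer the test-function argument since it stays within the discrete framework already set up.
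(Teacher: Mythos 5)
Your proof is correct and follows essentially the same route as the paper: both exhibit an admissible family whose discrete gradient is nearly constant on the scale of each interaction range $s$, estimate the small-$s$ modes by this slow variation, and control the large-$s$ tail by the crude bound $\|D^+_s f\|_\e\leq |s|\,\|Df\|_\e$ together with the summability of $|V''(s)|s^2$. The only difference is cosmetic: the paper takes $Df$ to be the normalized indicator of a long interval $\{1,\dots,Mt\}$ (so the bracket vanishes except at $O(s)$ boundary sites), whereas you take a smooth slowly varying profile $\psi(i/N)$; both give the same $o(1)$ lower bound and hence $\Delta\geq 0$.
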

\begin{proof}
    By Lemma \ref{lem..decay.properties.V.U}, we have
    $\sum_{s\geq 2}|V''(s)|s^2\leq \sum_{s\in \mathbb{Z}^*} V_{2,s}s^2<C$. Then for any $M\in \mathbb{N}^*$,
    there exists an $t\in \mathbb{N}^*$ such that $\sum_{s\geq t+1}|V''(s)|s^2<\frac{1}{M}$. For $s\geq 2$,
    by the Cauchy--Schwarz inequality, we obtain
    \begin{eqnarray}
        \sum_{i\in \mathbb{Z}}(D^+_s f^\pm_i)^2
        \leq \sum_{i\in \mathbb{Z}}s\sum_{j=i}^{i+s-1}(D f^\pm_j)^2
        =s^2\sum_{i\in \mathbb{Z}}(D f^\pm_i)^2.\label{eq..D^s.D.Cauchy.Schwatz}
    \end{eqnarray}
    We define $g$ as follows: $g_i=(2\e Mt)^{-1/2}$ for $1\leq i\leq Mt$ and $g_i$ otherwise. Obviously, $\|g\|_\e^2=\frac{1}{2}$.
    Note that if we define $Df^\pm=\pm g$, then $\|Df\|_\e=1$. Therefore
    \begin{eqnarray*}
        \Delta\geq \e \sum_{i\in \mathbb{Z}}\sum_{s\geq 2}2 V''(s)\left[(g_i+\cdots+g_{i+s-1})^2-s^2 g_i ^2\right].
    \end{eqnarray*}
    If $2\leq s\leq t$, then $(g_i+\cdots+g_{i+s-1})^2-s^2g_i^2=0$ for $i\not\in T$, where $T=\{-s+2,-s+2,\cdots,0\}\cup\{Mt-s+2,Mt-s+3,\cdots,Mt\}$.
    For $i\in T$, we have $|(g_i+\cdots+g_{i+s-1})^2-s^2g_i^2|\leq s^2 (2\e Mt)^{-1}$. Note that $|T|=2(s-1)$.
    Thus for any $2\leq s\leq t$, we have
    $
        \e \sum_{i\in \mathbb{Z}}\left[(g_i+\cdots+g_{i+s-1})^2-s^2 g_i ^2\right]
        \geq-\e 2(s-1)s^2(2\e Mt)^{-1}\geq -\frac{s^2}{M}
    $.

    If $s\geq t+1$, Eq. \eqref{eq..D^s.D.Cauchy.Schwatz} implies that
    $\e\sum_{i\in \mathbb{Z}}[(g_i+\cdots+g_{i+s-1})^2-s^2 g_i ^2]\geq -\e\sum_{i\in \mathbb{Z}} s^2 g_i^2=- \frac{s^2}{2}$.

    Therefore,
    \begin{eqnarray*}
        \Delta
        &\geq&\e \sum_{i\in \mathbb{Z}}\left\{\sum_{s=2}^{t}+\sum_{s=t+1}^\infty\right\}
        2 V''(s)\left[(g_i+\cdots+g_{i+s-1})^2-s^2 g_i ^2\right]\\
        &\geq& -\sum_{s=2}^t 2|V''(s)|\frac{s^2}{M}-\sum_{s=t+1}^{\infty}2|V''(s)|
        \frac{s^2}{2}\\
        &\geq& -\frac{1+2C}{M}.
    \end{eqnarray*}
    Let $M$ go to infinity, we obtain $\Delta \geq 0$.
\end{proof}
\begin{prop}\label{prop..NN.Delta}
    Suppose that Assumptions A1--A6 hold.
    If $V''(s)\leq 0$ for all $|s|\geq 2$, then
    $
        \Delta = 0
    $, and thence $\kappa>\Delta$. In particular, if $V(\cdot)$ is a nearest neighbor potential then $\kappa>\Delta=0$.
\end{prop}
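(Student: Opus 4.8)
The plan is to read off $\Delta=0$ directly from the closed form in Proposition~\ref{prop..explicit.Delta} and then combine it with the strict positivity already established for the PN operator. Recall that that proposition expresses $\Delta$ as a supremum over $\|Df\|_\e=1$ whose generic summand is $V''(s)$ multiplied by $\e\sum_i\bigl[s^{2}(Df^\pm_i)^2-(D^+_sf^\pm_i)^2\bigr]$, namely
$$\Delta=\sup_{\|Df\|_\e=1}\;\e\sum_{i\in\mathbb{Z}}\sum_{s\ge2}\sum_\pm V''(s)\Bigl[s^{2}(Df^\pm_i)^2-(D^+_sf^\pm_i)^2\Bigr],$$
so that only the tail $|s|\ge2$ of the potential enters and it suffices to fix the sign of each term of the $s$-series.

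First I would fix $s\ge2$ and a sign and sum in $i$. The double sum is absolutely convergent, since Lemma~\ref{lem..decay.properties.V.U} gives $\sum_{s}|s|^{5}V_{2,s}<\infty$ while $f\in X_\e$ forces $\sum_i(Df^\pm_i)^2<\infty$; hence I may interchange $\sum_i$ and $\sum_s$. The Cauchy--Schwarz inequality \eqref{eq..D^s.D.Cauchy.Schwatz} then yields $\sum_i(D^+_sf^\pm_i)^2\le s^2\sum_i(Df^\pm_i)^2$, so the bracketed quantity summed in $i$ is nonnegative for every $s\ge2$. Multiplying by $V''(s)\le0$, which is exactly the hypothesis for $|s|\ge2$, makes every summand nonpositive; passing to the supremum gives $\Delta\le0$. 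Since Proposition~\ref{prop..Delta.geq.0} already supplies $\Delta\ge0$, the two bounds together force $\Delta=0$.

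The nearest-neighbor statement is the degenerate instance of the same argument: when $V(s)=0$ for $|s|\ge2$ we have $V''(s)=0$ there, the entire $s\ge2$ series vanishes identically, and $\Delta=0$ follows with no inequality at all. In either case Proposition~\ref{prop..PN.stability} furnishes a constant $\kappa=\kappa(R,\theta,\alpha,\gamma''(0))>0$ with $\langle\delta^2E_\text{PN}[v]f,f\rangle\ge\kappa\|f\|_{X_0}^2$ for all $f\in X_0$, so the quantity $\kappa$ appearing in A7 is strictly positive; combining this with $\Delta=0$ gives $\kappa>0=\Delta$, which is precisely Assumption~A7.

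I expect no genuine analytic obstacle here: the conceptual content is simply that Cauchy--Schwarz controls the long-range difference $D^+_s$ against $s^2$ copies of the nearest-neighbor difference $D$, and the sign condition on $V''$ converts this one-sided control into the bound $\Delta\le0$ that matches the already-known $\Delta\ge0$. The only point requiring care is the bookkeeping that legitimizes summing in $i$ first and passing to the supremum, and this is handled entirely by the summability estimates of Lemma~\ref{lem..decay.properties.V.U} together with the normalization $\|Df\|_\e=1$.
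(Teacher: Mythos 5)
Your argument is correct and is essentially the paper's own proof: combine the Cauchy--Schwarz bound \eqref{eq..D^s.D.Cauchy.Schwatz} with the sign condition $V''(s)\le 0$ for $|s|\ge 2$ to get $\Delta\le 0$, invoke Proposition \ref{prop..Delta.geq.0} for $\Delta\ge 0$, and use Proposition \ref{prop..PN.stability} for $\kappa>0$. Note only that the bracket in your quoted formula for $\Delta$ is the negative of what is printed in Eq.~\eqref{eq..Delta.explicit}; your version is the one actually consistent with the definition \eqref{eq..Delta.definition} (PN minus atomistic) and with the inequality asserted in Proposition \ref{prop..explicit.Delta}, so you have in effect silently corrected a sign typo in the paper rather than introduced an error.
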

\begin{proof}
Eq. \eqref{eq..D^s.D.Cauchy.Schwatz} and $V''(s)\leq 0$ imply that
$
    V''(s)\sum_{i\in\mathbb{Z}}\left[\left(D^+_s f^\pm_i\right)^2-s^2(D f^\pm_i)^2\right]\leq 0
$ for $|s|\geq 2$.
Hence $\Delta\leq 0$.
According to Proposition \ref{prop..Delta.geq.0}, we have
$
    \Delta=0
$.
\end{proof}

\begin{prop}[stability of atomistic model]\label{prop..atom.stability}
    Suppose that Assumptions A1--A7 hold. Let $v$ be the dislocation solution of the PN model in Theorem \ref{thm..cont.existence.minimizer}.
    There exist $C$ and $\e_0$ such that for $0<\e<\e_0$ and for all $f\in X_\e$, we have
    \begin{eqnarray}\label{eq..atomistic.stability}
        \textstyle \left\langle \delta^2 E_\text{a}[v]f, f\right\rangle_\e\geq C\|f\|_{X_{\e}}^2.
    \end{eqnarray}
    Here $C$ and $\e_0$ depend on $R$, $\theta$, $\alpha$, $\gamma''(0)$, and $\Delta$.
\end{prop}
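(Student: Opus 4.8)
The plan is to route the stability of the atomistic model at the dislocation solution $v$ through the already-established stability of the PN model at $v$ (Proposition~\ref{prop..PN.stability}), paying only the price of the stability gap at the perfect lattice, which Assumption~A7 keeps strictly below $\kappa$. Concretely, I would assemble three ingredients proved above: the stability-gap identity of Proposition~\ref{prop..stability.gap.w/o.disl}, which trades $\langle\delta^2 E_\text{a}[v]f,f\rangle_\e$ for $\langle\delta^2 E_\text{PN}[v]\bar{f},\bar{f}\rangle$ plus the perfect-lattice gap up to an $O(\e)\|f\|_{X_\e}^2$ error; the explicit lower bound of Proposition~\ref{prop..explicit.Delta}, namely $\langle\delta^2 E_\text{a}[0]f,f\rangle_\e-\langle\delta^2 E_\text{PN}[0]\bar{f},\bar{f}\rangle\geq-\Delta\|Df\|_\e^2+O(\e)\|f\|_{X_\e}^2$; and the coercivity $\langle\delta^2 E_\text{PN}[v]\bar{f},\bar{f}\rangle\geq\kappa\|\bar{f}\|_{X_0}^2$ of Proposition~\ref{prop..PN.stability}, which applies because the linear interpolation lemma guarantees $\bar{f}\in X_0$ with $\bar{f}^\pm(0)=f^\pm_0=0$.

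Fix $f\in X_\e$ and $\e\leq 1$. Rearranging Proposition~\ref{prop..stability.gap.w/o.disl} and inserting the two lower bounds gives
\begin{eqnarray*}
    \langle\delta^2 E_\text{a}[v]f,f\rangle_\e
    &=&\langle\delta^2 E_\text{PN}[v]\bar{f},\bar{f}\rangle
    +\left[\langle\delta^2 E_\text{a}[0]f,f\rangle_\e-\langle\delta^2 E_\text{PN}[0]\bar{f},\bar{f}\rangle\right]+O(\e)\|f\|_{X_\e}^2\\
    &\geq&\kappa\|\bar{f}\|_{X_0}^2-\Delta\|Df\|_\e^2+O(\e)\|f\|_{X_\e}^2.
\end{eqnarray*}
Next I would invoke the lower bound $\|\bar{f}\|_{X_0}^2\geq\|Df\|_\e^2+\frac{1}{3}\|f^\perp\|_\e^2$ from Eq.~\eqref{eq..H^1.H^1_eps} to obtain
\begin{eqnarray*}
    \langle\delta^2 E_\text{a}[v]f,f\rangle_\e
    \geq(\kappa-\Delta)\|Df\|_\e^2+\frac{\kappa}{3}\|f^\perp\|_\e^2+O(\e)\|f\|_{X_\e}^2.
\end{eqnarray*}
Setting $c_0=\min\{\kappa-\Delta,\kappa/3\}$, which is strictly positive since $\kappa>\Delta$ by Assumption~A7 and $\kappa>0$ by Proposition~\ref{prop..PN.stability}, the first two terms dominate $c_0(\|Df\|_\e^2+\|f^\perp\|_\e^2)=c_0\|f\|_{X_\e}^2$.

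Finally, since the aggregated $O(\e)$ error carries a constant independent of $f$, I would write it as $C'\e\|f\|_{X_\e}^2$ and choose $\e_0=c_0/(2C')$, so that for $0<\e<\e_0$ one gets $\langle\delta^2 E_\text{a}[v]f,f\rangle_\e\geq\frac{c_0}{2}\|f\|_{X_\e}^2$, which is the claim with $C=c_0/2$; the stated dependence of $C$ and $\e_0$ on $R,\theta,\alpha,\gamma''(0),\Delta$ is inherited from $\kappa$ and from the two gap estimates. I expect no genuine obstacle in this concluding step, because the essential work is carried by Propositions~\ref{prop..stability.gap.w/o.disl} and~\ref{prop..explicit.Delta}, whose content is exactly that the dislocated and the perfect-lattice configurations share the same atomistic-to-continuum stability gap up to $O(\e)$. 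The only points demanding care are verifying that every $O(\e)$ constant is uniform in $f$ (so it can be absorbed by shrinking $\e_0$) and that the strict margin $\kappa-\Delta>0$ survives the subtraction of $\Delta\|Df\|_\e^2$; both become immediate once all bounds are recorded with explicit, $f$-independent constants.
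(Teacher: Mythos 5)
Your proposal is correct and follows essentially the same route as the paper: the paper's proof likewise combines the stability-gap identity of Proposition~\ref{prop..stability.gap.w/o.disl}, the lower bound of Proposition~\ref{prop..explicit.Delta}, the PN coercivity $\kappa\|\bar{f}\|_{X_0}^2$ together with $\|\bar{f}\|_{X_0}^2\geq\|Df\|_\e^2+\frac{1}{3}\|f^\perp\|_\e^2$, and then absorbs the $O(\e)$ remainder using $\kappa>\Delta$. Your write-up simply makes explicit the constant $c_0=\min\{\kappa-\Delta,\kappa/3\}$ and the choice of $\e_0$ that the paper leaves implicit.
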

\begin{proof}
    By Propositions \ref{prop..stability.gap.w/o.disl} and \ref{prop..explicit.Delta}, we have
    \begin{eqnarray*}
        \left\langle \delta^2 E_\text{a}[v]f,f\right\rangle_\e
        &=&\left\langle \delta^2 E_\text{PN}[v]\bar{f},\bar{f}\right\rangle+\left\langle \delta^2 E_\text{a}[0]f,f\right\rangle_\e-\left\langle \delta^2 E_\text{PN}[0]\bar{f},\bar{f}\right\rangle+O(\e)\|f\|_{X_\e}^2\\
        &\geq&\textstyle\kappa \|Df\|_\e^2+\frac{1}{3}\kappa\|f^\perp\|_\e^2-\Delta \|Df\|_\e^2+O(\e)\|f\|_{X_\e}^2\\
        &\geq& C\|f\|_{X_\e}^2
    \end{eqnarray*}
    for sufficiently small $\e$. Here we utilize the assumption $\kappa>\Delta$.
\end{proof}

We finish this section with a detailed verification on the stability condition of Lennard--Jones $(m,n)$ potential. The commonly used case is $(m,n)=(6,12)$.

\begin{prop}\label{prop..LJ.Delta}
    Let $V(\cdot)$ be Lennard--Jones $(m,n)$ potential, i.e.,
    \begin{eqnarray}
        V(x)=V_\text{LJ}(x)=-\left(\frac{r_0}{|x|}\right)^{m}+\left(\frac{r_0}{|x|}\right)^{n},\,\,1<m<n,\,\,x\neq 0,
    \end{eqnarray}
    where $r_0$ is some characteristic distance. Then $\Delta=0$, provided $\e$ is sufficiently small.
\end{prop}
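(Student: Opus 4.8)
The plan is to reduce everything to the sign condition already isolated in Proposition~\ref{prop..NN.Delta}. That proposition, combined with $\Delta\ge 0$ from Proposition~\ref{prop..Delta.geq.0}, shows that $\Delta=0$ as soon as $V''(s)\le 0$ for every integer $s$ with $|s|\ge 2$; by the symmetry $V(x)=V(-x)$ of Assumption~A2 it is enough to treat $s\ge 2$. So the entire proof reduces to an elementary sign analysis of $V''$ at the integers, together with a check that the characteristic distance $r_0$ lies in the admissible range.

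First I would differentiate the Lennard--Jones form. For $x>0$,
\begin{eqnarray*}
    V''(x)=-m(m+1)r_0^m x^{-m-2}+n(n+1)r_0^n x^{-n-2}
    =x^{-n-2}\left[n(n+1)r_0^n-m(m+1)r_0^m x^{n-m}\right].
\end{eqnarray*}
Since $n>m$, the bracket is strictly decreasing in $x$, positive near $x=0$ and negative for $x$ large, so $V''$ changes sign exactly once, from $+$ to $-$, at
\begin{eqnarray*}
    x^{*}=r_0\left(\frac{n(n+1)}{m(m+1)}\right)^{1/(n-m)}.
\end{eqnarray*}
Hence $V''(s)\le 0$ for all integers $s\ge 2$ is equivalent to $x^{*}\le 2$, i.e.\ to the single scalar inequality $V''(2)\le 0$, which rearranges to
\begin{eqnarray*}
    r_0\le r_0^{\max}:=2\left(\frac{m(m+1)}{n(n+1)}\right)^{1/(n-m)}.
\end{eqnarray*}
This disposes of the routine part of the argument.

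It remains to verify $r_0\le r_0^{\max}$, and this is where the hypothesis on $\e$ is used. The dimensionless $r_0$ is the ratio of the physical characteristic length to the lattice constant $a$, and $a$ is fixed as the energy-minimizing spacing of the bilayer chain; through the weak interlayer coupling $V_d=\e^2U$ this makes $r_0=r_0(\e)$. As $\e\to 0$ the interlayer contribution drops out of the stationarity condition and $r_0(\e)$ converges to the single-chain value $r_0^{(0)}$ determined by $\sum_{s\ge 1}sV'(s)=0$, which for the Lennard--Jones form is explicit, $r_0^{(0)}=\left(m\zeta(m)/(n\zeta(n))\right)^{1/(n-m)}$ with $\zeta$ the Riemann zeta function. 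I would then check the strict inequality $r_0^{(0)}<r_0^{\max}$ for the exponents at hand; for the standard pair $(m,n)=(6,12)$ one gets $r_0^{(0)}\approx 0.893$ versus $r_0^{\max}\approx 1.61$ (and simultaneously $V''(1)>0$, consistent with $\alpha>0$). Since $r_0(\e)$ depends continuously on $\e$, the strict inequality persists for all sufficiently small $\e$, giving $V''(s)\le 0$ for $|s|\ge 2$ and therefore $\Delta=0$ by Proposition~\ref{prop..NN.Delta}.

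\emph{The main obstacle is precisely this last step}: unlike the nearest-neighbour case, the sign condition $V''(s)\le 0$ is genuinely false for large $r_0$, so one cannot avoid invoking that the reference lattice sits at (or near) its equilibrium spacing. The delicate points are to make the $\e$-dependence of the equilibrium lattice constant rigorous and to confirm that the limiting value $r_0^{(0)}$ really lies below $r_0^{\max}$ for the relevant exponents; once the strict inequality at $\e=0$ is secured, the continuity/perturbation argument in $\e$ closes the gap and accounts for the qualifier ``provided $\e$ is sufficiently small''.
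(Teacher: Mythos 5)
Your proposal follows essentially the same route as the paper: reduce to the sign condition $V''(s)\le 0$ for $s\ge 2$ via Propositions \ref{prop..Delta.geq.0} and \ref{prop..NN.Delta}, pin down $r_0$ up to $O(\e^2)$ from the equilibrium condition $\sum_{k\in\mathbb{Z}^*}kV'(k)+\e^2\sum_{k\in\mathbb{Z}}(k-\tfrac12)U'(k-\tfrac12)=0$, and then check a single scalar inequality --- your $r_0^{(0)}<r_0^{\max}$ is algebraically identical to the paper's $\frac{(n+1)\zeta(m)}{(m+1)\zeta(n)}<2^{n-m}$. The only divergence is that you verify this inequality numerically for $(m,n)=(6,12)$ whereas the paper asserts it for general $1<m<n$ (likewise without proof), so the two arguments are complete to essentially the same degree.
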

\begin{proof}
We first remark that $r_0$ is not arbitrary but related to the minimal distance $s_0=1$ (the rescaled lattice constant). Note that $s_0=1$ solves
\begin{eqnarray}
    \frac{\partial}{\partial s_0}\left(\sum_{k\in \mathbb{Z}^*}V(ks_0)+\sum_{k\in \mathbb{Z}}V_d(ks_0-\frac{1}{2}s_0)\right)=0
\end{eqnarray}
Recall that $V_d=\e^2 U$. Thus
\begin{eqnarray}\label{eq..solving.s_0}
    \sum_{k\in \mathbb{Z}^*}kV'(k)+\e^2 \sum_{k\in \mathbb{Z}}(k-\frac{1}{2})U'(k-\frac{1}{2})=0.
\end{eqnarray}
By Lemma \ref{lem..decay.properties.V.U}, we have
$
    |\sum_{k\in \mathbb{Z}}(k-\frac{1}{2})U'(k-\frac{1}{2})|
    \leq\sum_{s\in \mathbb{Z}}(|s|+1)U_{1,s}\leq C
$.
Then
\begin{eqnarray*}
    0=\sum_{k\in \mathbb{Z}^*}kV'(k)+O(\e^2)
    &=&\sum_{k\in \mathbb{Z}^*}\left[m\frac{r_0^m}{k^m}-n\frac{r_0^{n}}{k^{n}}\right]+O(\e^2)\\
    &=&2m\zeta(m)r_0^m-2n\zeta(n)r_0^{n}+O(\e^2),
\end{eqnarray*}
where the zeta function $\zeta(t)=\sum_{k=1}^{\infty}k^{-t}$, $t>1$.
Therefore, for sufficient small $\e$, we have
\begin{eqnarray*}
    r_0^{n-m}=\frac{m\zeta(m)}{n\zeta(n)}+O(\e^2).
\end{eqnarray*}

For $s\geq 2$, we have
\begin{eqnarray*}
    V''(s)
    &=&m(m+1)\frac{r_0^m}{s^{m+2}}\left[-1+\frac{n(n+1)r_0^{n-m}}{m(m+1)s^{n-m}}\right]\\
    &\leq &m(m+1)\frac{r_0^m}{s^{m+2}}\left[-1+\frac{n(n+1)}{m(m+1)}\cdot\frac{\frac{m\zeta(m)}{n\zeta(n)}+O(\e^2)}{2^{n-m}}\right].
\end{eqnarray*}
It can be shown that $\frac{(n+1)\zeta(m)}{(m+1)\zeta(n)}<2^{n-m}$. Hence $V''(s)\leq 0$, $s\geq 2$ for sufficiently small $\e$.
By Propositions \ref{prop..Delta.geq.0} and \ref{prop..NN.Delta}, we obtain $\Delta=0$.
\end{proof}

\section{Existence of the Atomistic Model and Convergence}
In this section, we show that the atomistic model has a solution $v^\e$ which is $O(\e^2)$ away from the PN solution $v$ in terms of the metric induced by $X_{\e}$ norm.
Let us first provide the following lemma which makes use of the continuity of $\delta^2 E_\text{a}$ at $v$.

\begin{lem}\label{lem..nonexpansive}
    Suppose that Assumptions A1--A6 hold. Let $v$ be the dislocation solution of the PN model in Theorem \ref{thm..cont.existence.minimizer}. There exist constants $\e_0$ and $C$ such that for any $0<\e<\e_0$ and any $u, u'$ satisfying $u-v\in X_\e$, $u'-v\in X_\e$, $\|u-v\|_{X_\e}\leq \e$ and $\|u'-v\|_{X_\e}\leq \e$, we have
    \begin{eqnarray}
        |\left\langle \left(\delta^2 E_\text{a}[u]-\delta^2 E_\text{a}[u']\right)f, g\right\rangle_\e|\leq C\e^{-1/2}\|u-u'\|_{X_\e}\|f\|_{X_\e}\|g\|_{X_\e}\label{eq..Hu.Hu'.difference.f.g}
    \end{eqnarray}
    for all $f, g \in X_\e$. Moreover, if $u-v\in M_\e$, $u'-v\in M_\e$, and $f\in M_\e$, then we have
    \begin{eqnarray}
        \|\left(\delta^2 E_\text{a}[u]-\delta^2 E_\text{a}[u']\right)f\|_\e
        \leq C\e^{-3/2}\|u-u'\|_{X_\e}\|f\|_{X_\e}.\label{eq..Hu.Hu'.difference}
    \end{eqnarray}
    Here $c$ and $C$ depend on $R$, $\theta$, $\alpha$, $\gamma''(0)$, and $\Delta$.
\end{lem}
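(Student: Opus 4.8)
The plan is to expand the difference of Hessians using the explicit formula \eqref{eq..atom.second.variation}, apply the mean value theorem to $V''$ and $U''$ to extract a factor linear in $u-u'$, and then close the resulting trilinear estimate by placing one factor in $L^\infty_\e$ and the remaining two in $L^2_\e$, summing against the decay bounds of Lemma \ref{lem..decay.properties.V.U}. Writing $w=u-u'$, the quantity $\langle(\delta^2 E_\text{a}[u]-\delta^2 E_\text{a}[u'])f,g\rangle_\e$ splits into an elastic sum over $s\in\mathbb{Z}^*$ with bracket $V''(s+\e D^+_s u^\pm_i)-V''(s+\e D^+_s u'^\pm_i)$ multiplying $(D^+_s f^\pm_i)(D^+_s g^\pm_i)$, and a misfit sum over $s\in\mathbb{Z}$ with bracket $U''(s-\frac{1}{2}+u^+_{i+s}-u^-_i)-U''(s-\frac{1}{2}+u'^+_{i+s}-u'^-_i)$ multiplying $(f^+_{i+s}-f^-_i)(g^+_{i+s}-g^-_i)$. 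Since $\|w\|_{X_\e}\leq\e$ and $\nabla v$ is bounded (Theorem \ref{thm..cont.existence.minimizer}), for small $\e$ the arguments of $V''$ stay within $|\xi-s|\leq\frac{1}{2}|s|$ and those of $U''$ within a bounded shift of $s-\frac{1}{2}$, so the mean value theorem bounds the two brackets by $\e\,V_{3,s}|D^+_s w^\pm_i|$ and $U_{3,s}|w^+_{i+s}-w^-_i|$; the mild mismatch between the disregistry range and the radius $\frac{1}{2}$ in the definition of $U_{3,s}$ is absorbed into the constant via the decay $|U'''(x)|\leq C|x|^{-5-\theta}$ from Lemma \ref{lem..decay.properties.V.U}.

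For the elastic part I would bound $w$ in $L^\infty_\e$ through the crude embedding $\|D^+_s w^\pm\|_{L^\infty_\e}\leq|s|\,\|Dw^\pm\|_{L^\infty_\e}\leq|s|\e^{-1/2}\|w\|_{X_\e}$, and the two test factors in $\ell^2$ via $\sum_i|D^+_s f^\pm_i|^2=\e^{-1}\|D^+_s f^\pm\|_\e^2\leq\e^{-1}|s|^2\|Df^\pm\|_\e^2$ (Lemma \ref{lem..sum.D_s.f.L2}). Collecting powers of $\e$ and summing against $\sum_s|s|^3V_{3,s}<\infty$ gives a bound of order $\e^{1/2}\|w\|_{X_\e}\|f\|_{X_\e}\|g\|_{X_\e}$, already stronger than needed. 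The misfit part is the one producing the claimed $\e^{-1/2}$: the estimate $|w^+_{i+s}-w^-_i|\leq|w^+_{i+s}-w^+_i|+|w^\perp_i|\leq(\e|s|+1)\e^{-1/2}\|w\|_{X_\e}$ puts $w$ in $L^\infty_\e$ at cost $\e^{-1/2}$, while the cross-differences $f^+_{i+s}-f^-_i$ and $g^+_{i+s}-g^-_i$ are controlled by $\sum_i(f^+_{i+s}-f^-_i)^2\leq\e^{-1}(2|s|^2+2)\|f\|_{X_\e}^2$, established in the proof of Proposition \ref{prop..stability.gap.w/o.disl}. After Cauchy--Schwarz in $i$, the residual $s$-sum is $\sum_s U_{3,s}(\e|s|+1)(|s|+1)^2\leq C\sum_s(|s|+1)^3U_{3,s}<\infty$, again by Lemma \ref{lem..decay.properties.V.U}, which yields the misfit contribution $C\e^{-1/2}\|w\|_{X_\e}\|f\|_{X_\e}\|g\|_{X_\e}$ and hence \eqref{eq..Hu.Hu'.difference.f.g}.

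For the second estimate \eqref{eq..Hu.Hu'.difference} I would argue by duality on $M_\e$. Under the hypotheses $u-v,u'-v,f\in M_\e$ the symmetry is preserved, so $(\delta^2 E_\text{a}[u]-\delta^2 E_\text{a}[u'])f$ again lies in $M_\e$ and its $L^2_\e$ norm equals $\sup\{\langle(\delta^2 E_\text{a}[u]-\delta^2 E_\text{a}[u'])f,g\rangle_\e:g\in M_\e,\ \|g\|_\e=1\}$. Applying \eqref{eq..Hu.Hu'.difference.f.g} to such $g$ and converting the $X_\e$ norm of the test function into an $L^2_\e$ norm via $\|g\|_{X_\e}\leq C\e^{-1}\|g\|_\e$ (from Lemma \ref{lem..space.L2.epsilon} together with $\|g^\perp\|_\e\leq2\|g\|_\e$) costs one further factor $\e^{-1}$, upgrading $\e^{-1/2}$ to the stated $\e^{-3/2}$.

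The main obstacle I expect is the misfit cross-difference $f^+_{i+s}-f^-_i$, which couples the two layers across an arbitrary shift $s$ and hence grows linearly in $|s|$ in $L^2_\e$; the scheme closes only because this growth is defeated by the decay of $U_{3,s}$ through the summability lemma, and because the negative powers of $\e$ are tracked exactly (the borderline $\e^{-1/2}$ arises solely from replacing $L^\infty_\e$ by $\ell^2$). A secondary point needing care is checking that, along the mean-value segments, the arguments of $V''$ and $U''$ stay in the regions where $V_{3,s}$ and $U_{3,s}$ are the valid bounds, which relies on the uniform boundedness of $\nabla v$ and of the disregistry together with the smallness $\|w\|_{X_\e}\leq\e$.
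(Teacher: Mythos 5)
Your proposal is correct and follows essentially the same route as the paper's proof: the same elastic/misfit decomposition with the mean value theorem applied to $V''$ and $U''$, the same identification of the disregistry term $\|w^\perp\|_{L^\infty_\e}\leq \e^{-1/2}\|w\|_{X_\e}$ as the sole source of the $\e^{-1/2}$ factor, the same summability bounds from Lemma \ref{lem..decay.properties.V.U} and the cross-difference estimate from Proposition \ref{prop..stability.gap.w/o.disl}, and the same bootstrapping of \eqref{eq..Hu.Hu'.difference} from \eqref{eq..Hu.Hu'.difference.f.g} via $\|g\|_{X_\e}\leq C\e^{-1}\|g\|_\e$ on $M_\e$. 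The only cosmetic difference is that you phrase the second step as a duality supremum while the paper simply sets $g=(\delta^2 E_\text{a}[u]-\delta^2 E_\text{a}[u'])f$, which is equivalent.
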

\begin{proof}
    Setting $g=\left(\delta^2 E_\text{a}[u]-\delta^2 E_\text{a}[u']\right)f$ in Eq. \eqref{eq..Hu.Hu'.difference.f.g}, we have
    \begin{eqnarray*}
        \|g\|_\e^2
        \leq C \|u-u'\|_{X_\e}\|f\|_{X_\e}\|g\|_{X_\e}
    \end{eqnarray*}
    Utilizing Lemmata \ref{lem..space.L2.epsilon} and \ref{lem..subspace.M}, we obtain $\|g\|_{X_\e}\leq C\|g\|_{\e,1} \leq C\e^{-1} \|g\|_\e$. Thus
    \begin{eqnarray*}
        \|g\|_\e^2
        &\leq& C\e^{-3/2}\|u-u'\|_{X_\e}\|f\|_{X_\e}\|g\|_\e.
    \end{eqnarray*}
    This leads to Eq. \eqref{eq..Hu.Hu'.difference}. It remains to show Eq. \eqref{eq..Hu.Hu'.difference.f.g}.

    Note that $\|D^+_s(u-v)\|_{L^\infty_\e}\leq|s|\|D(u-v)\|_{L^\infty_\e}\leq |s|\e^{-1/2}\|u-v\|_{X_\e}\leq|s|\e^{1/2}$. This with $\|D^+_s v\|_{L^\infty_\e}\leq |s|\|v\|_{C^{0,1}}\leq C|s|$ implies $\|D^+_s u\|_{L^\infty_\e}\leq C|s|$.
    Similarly, we have $\|D^+_s(u'-v)\|_{L^\infty_\e}\leq |s|\e^{-1/2}\|u'-v\|_{X_\e} \leq |s|\e^{1/2}$, $\|D^+_s(u'-u)\|_{L^\infty_\e}\leq |s|\e^{-1/2}\|u'-u\|_{X_\e}\leq |s|\e^{1/2}$, and $\|D^+_s u'\|_{L^\infty_\e}\leq C|s|$.
    For sufficiently small $\e$, we have
    \begin{eqnarray*}
        |V''(s+\e D^+_s u^\pm_i)-V''(s+\e D^+_s u'^\pm_i)|
        =|V^{(3)}(\xi)||\e D^+_s(u'^\pm_i-u^\pm_i)|
        \leq V_{3,s}|s|\e^{1/2}\|u'-u\|_{X_\e},
    \end{eqnarray*}
    where $|\xi-s|\leq \max\{|\e D^+_s u^\pm_i|,|\e D^+_s u'^\pm_i|\}\leq C\e^{1/2}|s|\leq \frac{1}{2}|s|$.

    Note that $\|u^\perp-v^\perp\|_{L^\infty_\e}\leq \e^{-1/2}\|u-v\|_{X_\e}\leq \e^{1/2}$. This with $\|v^\perp\|_{L^\infty_\e}\leq 1$ implies that $\|u^\perp\|_{L^\infty_\e}\leq 1+\e^{1/2}\leq 2$. Similarly, we have $\|u'^\perp-v^\perp\|_{L^\infty_\e}\leq \e^{-1/2}\|u'-v\|_{X_\e}\leq \e^{1/2}$, $\|u'^\perp-u^\perp\|_{L^\infty_\e}\leq 2\e^{-1/2}\|u'-u\|_{X_\e}\leq 2\e^{1/2}$, and $\|u'^\perp\|_{L^\infty_\e}\leq 2$. For sufficiently small $\e$, we have
    \begin{eqnarray*}
        &&|U''(s-\frac{1}{2}+u^+_{i+s}-u^-_i)-U''(s-\frac{1}{2}+u'^+_{i+s}-u'^-_i)|\\
        &\leq& |U^{(3)}(\xi)||\e D^+_s(u'^+_i-u^+_i)+(u'^\perp_i-u^\perp_i)|\\
        &\leq &\left(\sum\nolimits_{j=-|s|-2}^{|s|+2} U_{3,s+j}\right)(|s|+2)\e^{-1/2}\|u'-u\|_{X_\e},
    \end{eqnarray*}
    where we have used that
    $
        |\xi-(s-\frac{1}{2})|
        \leq \max\{|\e D^+_s u'^+_i|+|u'^\perp_i|,|\e D^+_s u^+_i|+|u^\perp_i|\}
        \leq |s|+2
    $ and that $\sup_{|\xi-(s-\frac{1}{2})|\leq |s|+2}|U^{(3)}(\xi)|\leq \sum_{j=-|s|-2}^{|s|+2} U_{3,s+j}$.

    Recall Eq. \eqref{eq..atom.second.variation} and hence we have
    \begin{eqnarray*}
        &&|\left\langle \left(\delta^2 E_\text{a}[u]-\delta^2 E_\text{a}[u']\right)f, g\right\rangle_\e|\\
        &\leq& \e^{-1/2}\|u-u'\|_{X_\e} \cdot\frac{\e}{2}\sum_{\pm}\sum_{i\in \mathbb{Z}}\sum_{s\in \mathbb{Z}^*} V_{3,s}|s| \left|D^+_s f^\pm_i\right| \cdot \left|D^+_s g^\pm_i\right| \\
        &&+\e^{-1/2}\|u-u'\|_{X_\e}\cdot \e\sum_{i\in \mathbb{Z}}\sum_{s\in \mathbb{Z}}\left(\sum\nolimits_{j=-|s|-2}^{|s|+2} U_{3,s+j}\right)(|s|+2) \left|f^+_{i+s}-f^-_i\right|\cdot\left|g^+_{i+s}-g^-_i\right|.
    \end{eqnarray*}
    Utilizing Lemmas \ref{lem..decay.properties.V.U} and \ref{lem..sum.D_s.f.L2}, we obtain
    \begin{eqnarray*}
        &&\frac{\e}{2}\sum_{s\in \mathbb{Z}^*}V_{3,s}|s| \sum_{i\in \mathbb{Z}}\left|D^+_s f^\pm_i\right|\cdot\left|D^+_s g^\pm_i\right|
        \leq \frac{1}{2}\sum_{s\in \mathbb{Z}^*}V_{3,s}|s|^3\|D f^\pm\|_\e \|D g^\pm\|_\e
        \leq C \|f\|_{X_\e}\|g\|_{X_\e},\\
        &&\e\sum_{s\in \mathbb{Z}}\left(\sum\nolimits_{j=-|s|-2}^{|s|+2} U_{3,s+j}\right)(|s|+2) \sum_{i\in \mathbb{Z}}\left|f^+_{i+s}+f^-_i\right|\cdot\left|g^+_{i+s}+g^-_i\right|
        \leq C \|f\|_{X_\e}\|g\|_{X_\e}.
    \end{eqnarray*}
    Finally, Eq. \eqref{eq..Hu.Hu'.difference.f.g} is obtained by collecting these inequalities.
\end{proof}
\begin{lem}\label{lem..atom.stability.u}
    Suppose that Assumptions A1--A7 hold. Let $v$ be the dislocation solution of the PN model in Theorem \ref{thm..cont.existence.minimizer}. There exist constants $\e_0$ and $C$ such that for any $0<\e<\e_0$ and any $u$ satisfying $u-v\in X_\e$ and $\|u-v\|_{X_{\e}}\leq \e$, we have
    \begin{eqnarray}
        \textstyle \left\langle \delta^2 E_\text{a}[u]f, f\right\rangle_\e\geq C\|f\|_{X_{\e}}^2\label{eq..atomistic.stability.u}
    \end{eqnarray}
    for all $f\in X_\e$.
    Here $c$ and $C$ depend on $R$, $\theta$, $\gamma''(0)$, $\Delta$, and $\kappa$.
\end{lem}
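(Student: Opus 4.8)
The plan is to treat $\delta^2 E_\text{a}[u]$ as a small perturbation of $\delta^2 E_\text{a}[v]$, whose coercivity has already been established in Proposition \ref{prop..atom.stability}. The starting point is the exact decomposition, valid for every $f\in X_\e$,
\begin{eqnarray*}
    \left\langle \delta^2 E_\text{a}[u]f,f\right\rangle_\e
    =\left\langle \delta^2 E_\text{a}[v]f,f\right\rangle_\e
    +\left\langle \left(\delta^2 E_\text{a}[u]-\delta^2 E_\text{a}[v]\right)f,f\right\rangle_\e.
\end{eqnarray*}
The first term is bounded below by $C\|f\|_{X_\e}^2$ with $C>0$ (depending on $R$, $\theta$, $\alpha$, $\gamma''(0)$, and $\Delta$) directly from Proposition \ref{prop..atom.stability}; this is the step where Assumption A7, $\kappa>\Delta$, is used.

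For the second (perturbation) term I would invoke Lemma \ref{lem..nonexpansive} with the choices $u'=v$ and $g=f$. The pair $(u,v)$ satisfies the hypotheses: $u'-v=0\in X_\e$ trivially obeys $\|u'-v\|_{X_\e}=0\leq\e$, while $u-v\in X_\e$ with $\|u-v\|_{X_\e}\leq\e$ is exactly what is assumed. Hence estimate \eqref{eq..Hu.Hu'.difference.f.g} applies and gives
\begin{eqnarray*}
    \left|\left\langle \left(\delta^2 E_\text{a}[u]-\delta^2 E_\text{a}[v]\right)f,f\right\rangle_\e\right|
    \leq C\e^{-1/2}\|u-v\|_{X_\e}\|f\|_{X_\e}^2
    \leq C\e^{1/2}\|f\|_{X_\e}^2,
\end{eqnarray*}
where the final inequality again uses $\|u-v\|_{X_\e}\leq\e$.

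Combining the two bounds yields $\left\langle \delta^2 E_\text{a}[u]f,f\right\rangle_\e\geq \left(C-C\e^{1/2}\right)\|f\|_{X_\e}^2$, so choosing $\e_0$ small enough that $C\e_0^{1/2}\leq\tfrac{1}{2}C$ delivers \eqref{eq..atomistic.stability.u} with coercivity constant $\tfrac{1}{2}C$ for all $0<\e<\e_0$. There is no genuine obstacle beyond verifying the hypotheses of Lemma \ref{lem..nonexpansive}; the only point meriting attention is the interplay between the $\e^{-1/2}$ loss in the Lipschitz constant of $\delta^2 E_\text{a}$ (arising from the inverse-estimate conversion between $\|\cdot\|_\e$ and $\|\cdot\|_{X_\e}$ in Lemma \ref{lem..nonexpansive}) and the $\e$-smallness of $\|u-v\|_{X_\e}$. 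These combine to a net $\e^{1/2}$ gain, which is precisely what allows the perturbation term to be absorbed into the stability margin of $\delta^2 E_\text{a}[v]$, preserving strict positivity for sufficiently small $\e$.
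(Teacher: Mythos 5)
Your argument is correct and is essentially identical to the paper's proof: both start from the coercivity of $\delta^2 E_\text{a}[v]$ in Proposition \ref{prop..atom.stability} and control the perturbation $\left\langle\left(\delta^2 E_\text{a}[u]-\delta^2 E_\text{a}[v]\right)f,f\right\rangle_\e$ by applying Lemma \ref{lem..nonexpansive} with $u'=v$, so that the bound $C\e^{-1/2}\|u-v\|_{X_\e}\|f\|_{X_\e}^2\leq C\e^{1/2}\|f\|_{X_\e}^2$ can be absorbed for small $\e$. Your write-up is in fact slightly more explicit than the paper's about why the hypotheses of Lemma \ref{lem..nonexpansive} are met and how the net $\e^{1/2}$ gain arises.
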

\begin{proof}
    Thanks to Proposition \ref{prop..atom.stability}, we know $\left\langle \delta^2 E_\text{a}[v]f, f\right\rangle_\e\geq C\|f\|_{X_\e}^2$ for all $f\in X_{\e}$.
    It is sufficient to show that $|\left\langle \delta^2 E_\text{a}[v]f, f\right\rangle_\e-\left\langle \delta^2 E_\text{a}[u]f, f\right\rangle_\e|\leq \frac{1}{2}C\|f\|_{X_\e}^2$. The latter can be obtained by setting $v=u'$ in Lemma \ref{lem..nonexpansive}.
\end{proof}

As all preparations are complete, we provide a proof of our main theorem.

\begin{proof}[Proof of Theorem 2]
    By Theorem \ref{thm..cont.existence.minimizer}, we have $v\in C^5$ and $\|\nabla v\|_{W^{4,1}}\leq C$ independent of $\e$.
    Define a closed ball $B$ of $M_\e$:
    \begin{eqnarray}
        B=\left\{w\in M_\e:\|w\|_{X_\e}\leq C_0\e^2\right\},
    \end{eqnarray}
    where the constant $C_0$ can be chosen properly later. Given $w\in B$, we define operator $A_{w}: M_\e \rightarrow M_\e$ as follows
    \begin{eqnarray}
        \langle A_{w}f,g\rangle_{\e}=\int_{0}^{1}\langle \delta^2 E_\text{a}[u^t]f,g\rangle_\e\D t, \,\,f,g\in M_\e,
    \end{eqnarray}
    where $u^t=v+tw$ for $t\in [0,1]$. It is easy to check that this operator is well-defined.
    Next, we have $\|u^t-v\|_{X_\e}=t\|w\|_{X_\e}\leq C_0\e^2$. Then by Lemma \ref{lem..atom.stability.u}, we have $\left\langle \delta^2 E_\text{a}[u^t]f,f\right\rangle_{\e}\geq C\|f\|_{X_\e}^2$ for $t\in [0,1]$ and $f\in M_\e\subset X_\e$. Thus
    $
        \langle A_w f, f\rangle_{\e} \geq C\|f\|^2_{X_\e}
        \geq C\|f\|^2_\e
    $
    and $A_{w}$ is invertible. Since $-\delta E_\text{a}[v]\in M_\e$, we have $-A_w^{-1}\delta E_\text{a}[v]\in M_\e$.

    By Taylor's theorem with remainder, we have
    \begin{eqnarray}
        \delta E_\text{a}[v+w]
        =\delta E_\text{a}[v]+\int_{0}^{1}\delta^2 E_\text{a}[u^t]w\D t
        =\delta E_\text{a}[v]+A_w w,
    \end{eqnarray}
    where $w\in B$ and $u^t=v+tw$ for $t\in [0,1]$.

    To solve the atomistic model, we are sufficient to find $w\in B$ solving $A_{w}w=-\delta E_\text{a}[v]$. Define a map $G: B\rightarrow M_\e$
    for any $w\in B$,
    \begin{eqnarray}
        G(w)=-A_w^{-1}\delta E_\text{a}[v].
    \end{eqnarray}

    Next, we check that $G(B)\subset B$. Indeed, by Lemma \ref{lem..atom.stability.u} and the consistency (Proposition \ref{prop..consistency}), we have
    \begin{eqnarray*}
        C\|G(w)\|^2_{X_{\e}}
        &\leq&\langle A_w G(w),G(w)\rangle_{\e}\\
        &\leq&|\langle\delta E_\text{a}[v],G(w)\rangle_{\e}|\\
        &\leq&O(\e^2)\|G(w)\|_{X_{\e}},\\
        \|G(w)\|_{X_{\e}}&\leq& C_0\e^2.
    \end{eqnarray*}

    We are going to apply the contraction mapping theorem to $G$. Obviously, $B$ is a non-empty complete metric space with metric $d(u,v)=\|u-v\|_{X_\e}$. To guarantee the existence (and uniqueness) of a fixed point in $B$, it remains to show that $G: B\rightarrow B$ is a contraction mapping, i.e., $\|G(w)-G(w')\|_{X_\e}\leq L\|w-w'\|_{X_\e}$ for any $w, w'\in B$ and for some Lipschitz constant $L<1$.
    \begin{eqnarray*}
        \|G(w)-G(w')\|_{X_\e}
        &=&\|\left(A_w^{-1}-A_{w'}^{-1}\right)\delta E_\text{a}[v]\|_{X_{\e}}\\
        &=&\|A_w^{-1}(A_w-A_{w'})A_{w'}^{-1}\delta E_\text{a}[v]\|_{X_\e}\\
        &\leq& P_1
        \cdot P_2
        \cdot P_3
        \cdot\|\delta E_\text{a}[v]\|_\e,
    \end{eqnarray*}
    where
    \begin{eqnarray*}
        P_1&:=&\frac{\|A_w^{-1}(A_w-A_{w'})A_{w'}^{-1}\delta E_\text{a}[v]\|_{X_\e}}{\|(A_w-A_{w'})A_{w'}^{-1}\delta E_\text{a}[v]\|_\e}
        \leq \sup_{f\in M_\e}\frac{\|f\|_{X_\e}}{\|A_w f\|_\e},\\
        P_2&:=&\frac{\|(A_w-A_{w'})A_{w'}^{-1}\delta E_\text{a}[v]\|_\e}{\|A_{w'}^{-1}\delta E_\text{a}[v]\|_{X_\e}}
        \leq \sup_{f\in M_\e}\frac{\|(A_w-A_{w'})f\|_\e}{\|f\|_{X_\e}},\\
        P_3&:=& \frac{\|A_{w'}^{-1}\delta E_\text{a}[v]\|_{X_\e}}{\|\delta E_\text{a}[v]\|_\e}
        \leq \sup_{f\in M_\e}\frac{\|f\|_{X_\e}}{\|A_{w'} f\|_\e}.
    \end{eqnarray*}

    For $f\in M_\e$, $f\neq 0$ and $w\in B$, we have
    \begin{eqnarray*}
        C \|f\|_{X_\e}
        \leq \frac{\langle A_w f, f\rangle_\e}{\|f\|_{X_\e}}
        \leq \frac{\langle A_w f, f\rangle_\e}{\|f\|_\e}
        \leq \|A_w f\|_\e.
    \end{eqnarray*}
    Hence
    \begin{eqnarray*}
        P_1\leq C,\,\,P_3\leq C.
    \end{eqnarray*}

    By Lemma \ref{lem..nonexpansive}, we have
    \begin{eqnarray*}
        \|(A_w-A_{w'})f\|_\e
        &\leq&\int_{0}^{1}\|(\delta^2 E_\text{a}[v+tw]-\delta^2 E_\text{a}[v+tw'])f\|_\e\D t\\
        &\leq&\int_{0}^{1}C\e^{-3/2}\|tw-tw'\|_{X_\e}\|f\|_{X_\e}\D t\\
        &\leq&C\e^{-3/2}\|w-w'\|_{X_\e}\|f\|_{X_\e}.
    \end{eqnarray*}
    Thus
    \begin{eqnarray*}
        P_2\leq C\e^{-3/2}\|w-w'\|_{X_\e}.
    \end{eqnarray*}

    Combining these estimates with $\|\delta E_\text{a}[v]\|_\e\leq C\e^2$, we obtain
    \begin{eqnarray}
        \|G(w)-G(w')\|_{X_\e}
        \leq C\e^{-3/2}\|w-w'\|_{X_\e} C\e^2
        \leq L \|w-w'\|_{X_\e},
    \end{eqnarray}
    where $L<1$ for sufficiently small $\e$. Therefore, $G$ is a contraction mapping. There exists a unique fixed point $w^\e$ solving $H_{w^\e}w^\e=-\delta E_\text{a}[v]$. Let $v^\e=v+w^\e$.
    Then $v^\e$ solves the Euler--Lagrange equation of the atomistic model and satisfies $\|v^{\e}-v\|_{X_{\e}}\leq C\e^2$.
    Finally, $v^\e$ is a local minimizer of $E_\text{a}$ in $X_\e$ norm.
    In fact, for any $w\in X_\e$ with $\|w\|_{X_\e}\leq C_0\e^2$, we apply Lemma \ref{lem..atom.stability.u} and obtain
    \begin{eqnarray*}
        E_\text{a}[v^\e+w]-E_\text{a}[v^\e]=\int_0^1(1-t)\langle\delta^2 E_\text{a}[v^\e+tw]w,w\rangle_\e \D t\geq C\|w\|_{X_\e}^2>0.
    \end{eqnarray*}
\end{proof}

\begin{proof}[Proof of Corollary 1]
    1. We suppose, without loss of generality, that $\e\leq 1$. Since $v^+=-v^-$, the total energy of the PN model at $v$ reads as
    \begin{eqnarray}
        E_\text{PN}[v]=\int_{\mathbb{R}}\left[\alpha |\nabla v^+|^2+\gamma(2v^+)\right]\D x.
    \end{eqnarray}
    Using trapezoidal rule, we have the numerical approximation of this energy
    \begin{eqnarray}
        E^\text{app}_\text{PN}[v]=\e\sum_{i\in\mathbb{Z}}\left[\alpha |\nabla v^+_i|^2+\gamma(2v^+_i)\right].
    \end{eqnarray}
    It is sufficient to show that $\left|E_\text{a}[v^\e]-E^\text{app}_\text{PN}[v]\right|\leq C\e^2$ and $\left|E^\text{app}_\text{PN}[v]-E_\text{PN}[v]\right|\leq C\e^2$.

    2. Estimate $|E_\text{a}[v^\e]-E^\text{app}_\text{PN}[v]|$. Recall Eqs. \eqref{eq..alpha.rescaled} and \eqref{eq..gamma.rescaled}. Let $E_\text{a}[v^\e]-E^\text{app}_\text{PN}[v]=R_\text{elas}+R_\text{mis}$, where
    \begin{eqnarray*}
        R_\text{elas}&=&\frac{\e^{-1}}{2}\sum_{i\in\mathbb{Z}}\sum_{s\in\mathbb{Z}^*}
        \left[V(s+\e D^+_s v^{\e,+}_i)+V(s-\e D^+_s v^{\e,+}_i)-2V(s)-\e^2V''(s)s^2(\nabla v^+_i)^2\right],\\
        R_\text{mis}&=&\e\sum_{i\in\mathbb{Z}}\sum_{s\in\mathbb{Z}}\left[U(s-\frac{1}{2}+v^{\e,+}_{i+s}+v^{\e,+}_i)-U(s-\frac{1}{2}+2v^+_i)\right].
    \end{eqnarray*}
    Let $w=v^\e-v$ on $\e\mathbb{Z}$. Thanks to Theorem \ref{thm..atom.existence.minimizer}, we have $w\in M_\e$ and $\|w\|_{X_\e}\leq C\e^2$. This implies that $v^{\e,+}=-v^{\e,-}$, $\|Dw\|_{L^\infty_\e}\leq C\e^\frac{3}{2}$, and $\|Dw\|_{\e}\leq C\e^2$.
    Using Lemmas \ref{lem..sum.D_s.f.L2} and \ref{lem..u^k_is.L2norm}, we have $\|D^+_s w\|_{\e}\leq |s|\|D w\|_{\e}\leq C|s|\e^2$ and $\|D^+_s v\|_{\e}\leq |s| \|D v\|_{\e}\leq |s|\|v_{1,1}\|_\e\leq C|s|$. Also notice that $\|D^+_s v\|_{L^\infty_\e}\leq |s|\|v\|_{C^{0,1}}\leq C|s|$ and $\|D^+_s w\|_{L^\infty_\e}\leq |s|\|D w\|_{L^\infty_\e}\leq C|s|\e^{\frac{3}{2}}$.
    Thus
    \begin{eqnarray*}
        &&\|D^+_s v^{\e}\|_{\e}\leq \|D^+_s v\|_{\e}+\|D^+_s w\|_{\e} \leq C|s|,\\
        &&\|D^+_s v^{\e}\|_{L^\infty_\e}\leq \|D^+_s v\|_{L^\infty_\e}+\|D^+_s w\|_{L^\infty_\e} \leq C|s|.
    \end{eqnarray*}
    Since $\|D^+_s w\|_\e\leq C|s|\e^2$, we have $\|D^-_s D^+_s w\|_\e\leq |s|\|D D^+_s w\|_\e\leq C\e^{-1}|s|\|D^+_s w\|_\e\leq C s^2 \e$. Note that $\|D^-_s D^+_s v\|_\e\leq s^2\|v_{2,1}\|_\e\leq Cs^2$. Thus
    \begin{eqnarray*}
        \|D^-_s D^+_s v^\e\|_\e\leq \|D^-_s D^+_s w\|_\e+\|D^-_s D^+_s v\|_\e\leq Cs^2.
    \end{eqnarray*}

    To estimate the elastic part, we apply Taylor theorem:
    \begin{eqnarray}
        |R_\text{elas}|\leq \left|\frac{\e}{2}\sum_{s\in\mathbb{Z}^*}V''(s)\sum_{i\in\mathbb{Z}}\left[(D^+_s v^{\e,+}_i)^2-(s\nabla v^+_i)^2\right]\right|+\frac{\e^{3}}{24}\sum_{s\in\mathbb{Z}^*}V_{4,s}\sum_{i\in\mathbb{Z}}|D^+_s v^{\e,+}_i|^4.\label{eq..energy.consistency1}
    \end{eqnarray}
    For the second term on the right hand side of \eqref{eq..energy.consistency1}, we have
    \begin{eqnarray}
        \frac{\e^{3}}{24}\sum_{s\in\mathbb{Z}^*}V_{4,s}\sum_{i\in\mathbb{Z}}|D^+_s v^{\e,+}_i|^4
        \leq C\e^2\sum_{s\in\mathbb{Z}^*}V_{4,s}s^2 \|D^+_s v^{\e}\|_\e^2\leq C\e^2\sum_{s\in\mathbb{Z}^*}V_{4,s}s^4\leq C\e^2.\label{eq..energy.consistency11}
    \end{eqnarray}
    We notice that $D^+_s v^{\e,+}_i-s\nabla v^+_i=D^+_s w_i+D^+_s v^+_i-s\nabla v^+_i$ and $|D^+_s v^+_i-s\nabla v^+_i-\frac{1}{2}\e s^2\nabla^2 v^+_i|\leq \frac{1}{6}\e^2 |s|^3 v_{3,s,i}$ (Recall Eq. \eqref{eq..u_is^k}). Using Lemma \ref{lem..u^k_is.L2norm}, we have $\|v_{3,s}\|_\e\leq C|s|^{1/2}$ and $\|\nabla^k v\|_\e\leq \|v_{k,1}\|_\e\leq C$, $k=1,2$.
    For the first term on the right hand side of Eq. \eqref{eq..energy.consistency1}, we have
    \begin{eqnarray}
        &&\left|\frac{\e}{2}\sum_{s\in\mathbb{Z}^*}V''(s)\sum_{i\in\mathbb{Z}}\left[(D^+_s v^{\e,+}_i)^2-(s\nabla v^+_i)^2\right]\right|\nonumber\\
        &\leq& \left|\frac{\e}{2}\sum_{s\in\mathbb{Z}^*}V''(s)\sum_{i\in\mathbb{Z}}(D^+_s w_i+D^+_s v^+_i-s\nabla v^+_i)(D^+_s v^{\e,+}_i+s\nabla v^+_i)\right|\nonumber\\
        &\leq& \frac{1}{2}\sum_{s\in\mathbb{Z}^*}V_{2,s}(\|D^+_s w\|_\e+\frac{1}{6}\e^2 |s|^3\|v_{3,s}\|_\e)(\|D^+_s v^\e\|_\e+|s|\|\nabla v\|_\e)\nonumber\\
        &&+\left|\frac{\e}{2}\sum_{s\in\mathbb{Z}^*}V''(s)\sum_{i\in\mathbb{Z}}\left(\frac{1}{2}\e s^2\nabla^2 v^+_i\right) D^+_s v^{\e,+}_i\right|+\left|\frac{\e}{2}\sum_{s\in\mathbb{Z}^*}V''(s)\sum_{i\in\mathbb{Z}}\left(\frac{1}{2}\e s^2\nabla^2 v^+_i\right) \nabla v^+_i\right|\nonumber\\
        &\leq& C\e^2\sum_{s\in\mathbb{Z}^*}V_{2,s}|s|^5+C \e^2 \sum_{s\in\mathbb{Z}^*}V_{2,s}s^4+0\leq C\e^2.\label{eq..energy.consistency12}
    \end{eqnarray}
    We have used the following fact that $\sum_{i\in\mathbb{Z}}\nabla^2 v^+_i \nabla v^+_i=\frac{1}{2}\sum_{i\in\mathbb{Z}}(\nabla^2 v^+_i \nabla v^+_i+\nabla^2 v^+_{-i} \nabla v^+_{-i})=0$,
    $\sum_{s\in\mathbb{Z}^*}V''(s)s^2D^+_s v^{\e,+}_i=\frac{1}{2}\sum_{s\in\mathbb{Z}^*}V''(s)s^2(D^+_s v^{\e,+}_i+D^+_{-s} v^{\e,+}_i)=\frac{\e}{2}\sum_{s\in\mathbb{Z}^*}V''(s)s^2(D^-_s D^+_s v^{\e,+}_i)$, and that
    \begin{eqnarray*}
        \left|\frac{\e}{2}\sum_{s\in\mathbb{Z}^*}V''(s)\sum_{i\in\mathbb{Z}}\left(\frac{1}{2}\e s^2\nabla^2 v^+_i\right) D^+_s v^{\e,+}_i\right|
        &\leq& \left|\frac{\e^3}{8}\sum_{s\in\mathbb{Z}^*}V''(s)s^2\sum_{i\in\mathbb{Z}}\nabla^2 v^+_i D^-_s D^+_s v^{\e,+}_i\right|\\
        &\leq& C \e^2 \sum_{s\in\mathbb{Z}^*}V_{2,s}s^4.
    \end{eqnarray*}

    Next, we estimate the misfit part.
    Thanks to Lemma \ref{lem..subspace.M}, we have $\|w^+\|_\e\leq \|w\|_{X_\e}\leq C\e^2$. Also recall that $\|v^+\|_\e\leq C$. Note that $v^{\e,+}_{i+s}+v^{\e,+}_i-2v^+_i=w^+_{i+s}+w^+_i+\e D^+_s v^+_i$ and $v^{\e,+}_{i+s}+v^{\e,+}_i-2v^+_{i+s}=w^+_{i+s}+w^+_i-\e D^+_s v^+_i$.
    Since $\sum_{s\in\mathbb{Z}}U'(s-\frac{1}{2})=0$ and the following series is absolutely summable, we have
    \begin{eqnarray*}
        \sum_{i\in\mathbb{Z}}\sum_{s\in\mathbb{Z}}
        U'(s-\frac{1}{2})(w^+_{i+s}+w^+_i)=2\sum_{i\in\mathbb{Z}}w^+_i\sum_{s\in\mathbb{Z}}U'(s-\frac{1}{2})=0.
    \end{eqnarray*}
    Now repeatedly applying Taylor theorem to $U$ leads to
    \begin{eqnarray}
        |2R_\text{mis}|&=&\left|\e\sum_{i\in\mathbb{Z}}\sum_{s\in\mathbb{Z}} \left[2U(s-\frac{1}{2}+v^{\e,+}_{i+s}+v^{\e,+}_i)-U(s-\frac{1}{2}+2v^+_i)-U(s-\frac{1}{2}+2v^+_{i+s})\right]\right|\nonumber\\
        &\leq&\left|\e\sum_{i\in\mathbb{Z}}\sum_{s\in\mathbb{Z}}
        \left[U'(s-\frac{1}{2}+2v^+_i)+U'(s-\frac{1}{2}+2v^+_{i+s})\right](w^+_{i+s}+w^+_i)\right|\nonumber\\
        &&+\left|\e\sum_{i\in\mathbb{Z}}\sum_{s\in\mathbb{Z}}
        \left[U'(s-\frac{1}{2}+2v^+_i)-U'(s-\frac{1}{2}+2v^+_{i+s})\right]\e D^+_s v^+_i
        \right|\nonumber\\
        &&
        +\left|\e\sum_{i\in\mathbb{Z}}\sum_{s\in\mathbb{Z}}
        \frac{1}{2}U_{2,s}\left[(w^+_{i+s}+w^+_i+\e D^+_s v^+_i)^2+(w^+_{i+s}+w^+_i-\e D^+_s v^+_i)^2\right]\right|
        \nonumber\\
        &\leq&\left|\e\sum_{i\in\mathbb{Z}}\sum_{s\in\mathbb{Z}}
        2U'(s-\frac{1}{2})(w^+_{i+s}+w^+_i)\right|
        +\left|\e\sum_{i\in\mathbb{Z}}\sum_{s\in\mathbb{Z}}
        U_{2,s}(2v^+_i+2v^+_{i+s})(w^+_{i+s}+w^+_i)\right|\nonumber\\
        &&+\e\sum_{i\in\mathbb{Z}}\sum_{s\in\mathbb{Z}}2U_{2,s}|\e D^+_{s}v^+_{i}|^2
        +C\e^2
        \nonumber\\
        &\leq&0+C\e^2+C\e^2+C\e^2\leq C \e^2.\label{eq..energy.consistency2}
    \end{eqnarray}
    Combining Eqs. \eqref{eq..energy.consistency1}, \eqref{eq..energy.consistency11}, \eqref{eq..energy.consistency12}, and \eqref{eq..energy.consistency2}, we obtain
    \begin{eqnarray*}
        |E_\text{a}[v^\e]-E^\text{app}_\text{PN}[v]|\leq C \e^2.
    \end{eqnarray*}

    3 Estimate $\left|E^\text{app}_\text{PN}[v]-E_\text{PN}[v]\right|$. Let $g(x)=\alpha (\nabla v^+(x))^2+\gamma(2v^+(x))$ for $x\in \mathbb{R}$. Then $g\in C^4$ and
    \begin{eqnarray*}
        g'(x)&=&2\alpha\nabla v^+\nabla^2 v^+ +2\gamma'(2v^+)\nabla v^+,\\
        g''(x)&=&2\alpha(\nabla^2 v^+)^2+2\alpha\nabla v^+\nabla^3 v^+ +4\gamma''(2v^+)(\nabla v^+)^2+2\gamma'(2v^+)\nabla^2 v^+.
    \end{eqnarray*}
    By Lemma \ref{lem..regularity.gamma}, we have $\|\gamma^{(k)}\|_{L^\infty}\leq C$, $k=1,2$. Thus
    \begin{eqnarray*}
        \max_{(i-1/2)\e\leq\xi\leq (i+1/2)\e}|g''(\xi)|
        \leq C\left\{(v_{2,1,i})^2+v_{1,1,i}v_{3,1,i}+(v_{1,1,i})^2+v_{2,1,i}\right\}.
    \end{eqnarray*}
    Finally, we apply Lemma \ref{lem..u^k_is.L2norm}:
    \begin{eqnarray*}
        \left|E^\text{app}_\text{PN}[v]-E_\text{PN}[v]\right|
        &\leq& \sum_{i\in\mathbb{Z}}\left|\int_{(i-1)\e}^{(i+1)\e}g(x)\D x-\e g(i\e)\right|\\
        &\leq& \frac{\e^3}{3}\sum_{i\in\mathbb{Z}}\max_{(i-1/2)\e\leq\xi\leq (i+1/2)\e}|g''(\xi)|\\
        &\leq&C\e^3\sum_{i\in\mathbb{Z}}\left\{(v_{2,1,i})^2+v_{1,1,i}v_{3,1,i}+(v_{1,1,i})^2+v_{2,1,i}\right\}\\
        &\leq&C\e^2.
    \end{eqnarray*}
\end{proof}

\section*{Appendix: Small parameter $\e$ calculated by atomistic and first principles calculations}

In this appendix, we calculate the small parameter $\e$ defined in Eq.~\eqref{eq..epsilon} in Sec.~\ref{sec..rescaling}
that characterizes the strength of the weak van der Waals interlayer interaction v.s. the strong covalent-bond intralayer interaction in the bilayer graphene, using the data of atomistic and first principles calculations \cite{Dai2016-p85410-85410,Zhou2015-p155438-155438}.

In the PN model for bilayer graphene in Ref.~\cite{Dai2016-p85410-85410},  the two dimensional $\gamma$-surface was fitted by a truncated trigonometric series as
\begin{eqnarray*}
    \gamma_{2\text{d}}(\phi,\psi)&=&c_0+c_1\left[\cos\frac{2\pi}{a}\left(\phi+\frac{\psi}{\sqrt{3}}\right)+\cos\frac{2\pi}{a}\left(\phi-\frac{\psi}{\sqrt{3}}\right)+\cos\frac{4\pi\psi}{\sqrt{3}a}\right]\\
    &&+c_2\left[\cos\frac{2\pi}{a}\left(\phi+\sqrt{3}\psi\right)+\cos\frac{2\pi}{a}\left(\phi-\sqrt{3}\psi\right)+\cos\frac{4\pi\phi}{a}\right]\\
    &&+c_3\left[\cos\frac{2\pi}{a}\left(2\phi+\frac{2\psi}{\sqrt{3}}\right)+\cos\frac{2\pi}{a}\left(2\phi-\frac{2\psi}{\sqrt{3}}\right)+\cos\frac{8\pi\psi}{\sqrt{3}a}\right]\\
    &&+c_4\left[\sin\frac{2\pi}{a}\left(\phi-\frac{\psi}{\sqrt{3}}\right)-\sin\frac{2\pi}{a}\left(\phi+\frac{\psi}{\sqrt{3}}\right)+\sin\frac{4\pi\psi}{\sqrt{3}a}\right]\\
    &&+c_5\left[\sin\frac{2\pi}{a}\left(2\phi-\frac{2\psi}{\sqrt{3}}\right)-\sin\frac{2\pi}{a}\left(2\phi+\frac{2\psi}{\sqrt{3}}\right)+\sin\frac{8\pi\psi}{\sqrt{3}a}\right],
\end{eqnarray*}
where $\{c_i\}_{i=1}^5$ are constants obtained by fitting the data of first principles calculations
\cite{Zhou2015-p155438-155438} as
\begin{eqnarray*}
    &&c_0=21.336\times10^{-3},\,\,c_1=-6.127\times10^{-3},\,\,c_2=-1.128\times10^{-3},
    \\&& c_3=0.143\times10^{-3},
    c_4=\sqrt{3}c_1,\,\,c_5=-\sqrt{3}c_3,
\end{eqnarray*}
where the units are $\mathrm{J}/\mathrm{m}^2$. On the other hand, the elasticity constants of each monolayer graphene, in the unit of $\mathrm{J}/\mathrm{m}^2$, are \cite{Dai2016-p85410-85410}
\begin{eqnarray*}
    C_{11}=312.67,\,\,C_{12}=91.66,\,\,C_{44}=110.40.
\end{eqnarray*}

In our one-dimensional case, $\gamma(\phi)=\gamma_{2\text{d}}(\phi,0)$ and $\alpha=C_{11}$.
Using the above values and Eq.~\eqref{eq..epsilon} in Sec.~\ref{sec..rescaling}, we have
\begin{eqnarray*}
    \e=\sqrt{\frac{a^2\frac{\partial^2\gamma_{2\text{d}}(0,0)}{\partial \phi^2}}{C_{11}}}\approx 0.0475.
\end{eqnarray*}
Thus it is reasonable to set $\e$ as a small parameter.

\section*{Acknowledgments}
This work was partially supported by the Hong Kong Research Grants Council General Research Fund 16313316.
The work of Ming was partially supported by the National Natural Science Foundation of China for Distinguished Young Scholars 11425106, and National Natural Science Foundation of China Grants 91630313, and by the support of CAS NCMIS.

\end{document}